\documentclass{article}

\usepackage{mathtools}
\usepackage{amssymb,amsthm,esint,empheq,etoolbox}
\usepackage{mathrsfs}
\usepackage{graphicx, tikz, cancel}
\usepackage{hyperref}
\usepackage[backend=biber, style=numeric, sorting=nyt,url=false, giveninits=true, isbn=false]{biblatex}
\bibliography{biblio}
\AtBeginBibliography{\footnotesize}

\newtheorem{Theorem}{Theorem}[section]
\newtheorem{Corollary}[Theorem]{Corollary}
\newtheorem{Proposition}[Theorem]{Proposition}
\newtheorem{Lemma}[Theorem]{Lemma}

\theoremstyle{definition}
\newtheorem{Definition}[Theorem]{Definition}
\newtheorem{Remark}[Theorem]{Remark}

\newcommand{\norm}[1]{\left\lVert #1 \right\rVert}
\newcommand{\abs}[1]{\left|#1\right|}

\newcommand{\mr}{\mathrm}

\newcommand{\de}{\mathrm{d}}
\newcommand{\R}{\mathbb{R}}

\newcommand{\N}{\mathbb{N}}
\newcommand{\e}{\varepsilon}
\newcommand{\del}{\delta}

\let\phi\varphi
\let\l\ell
\def\e{\varepsilon}

\newcommand{\Glim}{\Gamma\textrm{-}\lim\;}
\newcommand{\sgn}{\operatorname{sgn}}

\title{Homogenization in one-dimensional higher-order non-local models of phase transitions}

\author{Fabrizio Caragiulo, Sergio Scalabrino, Edoardo Voglino}
\date{}

\begin{document}

\maketitle
\begin{abstract}
    We study the limit behavior of Cahn--Hilliard-type functionals in which the derivative is replaced by higher-order fractional derivatives and modulated by an oscillating factor. Depending on the ratio between the oscillation scale and the interface length, we identify three different regimes and prove $\Gamma$-convergence in each regime to a suitable sharp-interface limit functional. In the extreme regimes, we prove a separation-of-scales effect that enables us to highlight the difference relative to the local models.
\end{abstract}

\section{Introduction}

The gradient theory of phase transitions \cite{MR985992, Gurtin, MR1000224, Modica} studies the behavior of double-well energies perturbed by a gradient term, following models by van Der Waals, Cahn-Hilliard, and others \cite{vdW, CH}. This analysis can be translated in the computation of a $\Gamma$-limit $F$ of functionals of the form
\begin{equation*}
F_{\e}(u)= \frac1\e\int_\Omega W(u)\,\de t+\e\int_\Omega|\nabla u|^2\de t.
\end{equation*}
If $W$ is a non-negative function with zeros exactly in $-1$ and $1$ the domain of this $\Gamma$-limit can be identified with $BV(\Omega;\{\pm 1\})$, where takes the form of a {\em sharp-interface functional} 
\[F(u)=m^1 {\rm Per}(\{u=1\},\Omega),\] proportional to the perimeter of the set $\{u=1\}$ in $\Omega$. The {\em surface tension} $m^1$ can be described by a one-dimensional  {\em optimal-profile problem} 
\begin{equation}\label{i1}
m^1= \inf\left\{\int_{-\infty}^{+\infty} (W(u)+|u'|^2)\,\de t: u\in H^1_{\rm loc}(\mathbb R), \,\lim_{t\to \pm \infty} u(t)=\pm1
\right\}
\end{equation}
(see \cite{MR985992}), which, by a scaling argument, shows that minimizers for $\e>0$ tend to develop a transition layer of scale $\e$ around a limiting minimal interface. 

When an inhomogeneity is considered, either the double-well or the gradient term, or both, depend on the spatial variable. If this dependence is highly oscillatory, the outcome is a sharp-interface functional whose effective surface tension must be described by homogenization techniques.
Usually, the inhomogeneity is modeled by introducing periodic coefficients depending on a small parameter $\delta$, which contributes to the form of the limit together with the small parameter $\e$. To describe the issues under examination, we consider the simple prototypical one-dimensional case 
\begin{equation}\label{i2}
F_{\e,\delta}(u)= \frac1\e\int_0^1 W(u)\,\de t+\e\int_0^1a\Big(\frac{t}\delta\Big)|u'|^2\de t,
\end{equation}
with the $1$-periodic function $a$ satisfying $0<\alpha\leq a\leq \beta<+\infty$ and $W$ even. 
The behavior as $\e,\delta\to 0$ is described differently in three separate regimes.  
When $\e\ll\delta$, upon assuming the function $a$ continuous (since interfaces converge to a point after scaling, this assumption removes unnecessary technicalities), the surface tension of the $\Gamma$-limit is
\begin{equation}\label{i3}
m^1_\infty= \min\big\{\sqrt{ a(\tau)}:\tau\in[0,1]\big\} m^1.
\end{equation}
 To explain formula \eqref{i3}, note that the term $a(\frac{t}\delta)$ can be considered as equal to a constant $a(\tau)$, corresponding to some $\tau\in[0,1]$, at the scale $\e$ of the internal boundary layer. Rewriting 
\begin{eqnarray}\label{i4}\nonumber
&&\hskip-1cm\frac1\e\int_0^1 W(u)\,\de t+\e\int_0^1a(\tau)|u'|^2\de t\\
&=&\sqrt{a(\tau)}\Big(
\frac{1}{\e\sqrt {a(\tau)}}\int_0^1 W(u)\,\de t+\e\sqrt{ a(\tau)}\int_0^1|u'|^2\de t\Big),
\end{eqnarray}
as a formal $\Gamma$-limit we obtain  $\sqrt{ a(\tau)}\, m^1\#(S(u))$, with $S(u)$ denoting the jump set of $u$. Note that, up to the factor $\sqrt{a(\tau)}$, this is the one-dimensional version of the sharp-interface limit when $a$ is the constant $1$. 
The surface tension $m^1_\infty$ is then obtained by optimization on $\tau$. This two-step process highlights a {\em separation-of-scales effect}: formally, we can first compute the $\Gamma$-limit for $\e\to 0$, and then for $\delta\to 0$. Conversely, if $\delta\ll\e$ then the surface tension of the $\Gamma$-limit is
\begin{equation}\label{i5}
m^1_0= \sqrt{\underline a} \,m^1, \qquad \hbox{ with } \underline a= \Big(\int_0^1\frac{1}{a(\tau)}\de\tau\Big)^{-1}
\end{equation}
denoting the {\em harmonic mean} of $a$. Again, this limit is formally obtained by first letting $\delta\to 0$, obtaining 
a first homogenized $\Gamma$-limit 
\begin{equation*}
F^{\rm hom}_{\e}(u)= \frac1\e\int_0^1 W(u)\,\de t+\e \underline a\int_0^1|u'|^2\de t,
\end{equation*}
(see e.g.~\cite{Hom_mult_int}), from which we obtain \eqref{i5} by arguing as in \eqref{i4} and  letting $\e\to 0$. The remaining {\em critical case} is when $\delta\sim\e$; that is, the inhomogeneity is at the same scale as the transition layer. In this case, there is no separation of scales, and the result depends on the ratio
$\delta/\e$, which we may assume to converge to $\lambda>0$. In this case, the formula \eqref{i1} for the surface tension must be adapted to the inhomogeneous case. Substituting $\delta$ with $\lambda\e$, a scaling argument gives the surface tension
\begin{equation*}
m^1_\lambda= \inf\Big\{\int_{-\infty}^{+\infty} \Big(W(u)+a\Big(\frac{t}\lambda\Big)|u'|^2\Big)\,\de t: u\in H^1_{\rm loc}(\mathbb R), \lim_{t\to \pm \infty} u(t)=\pm1
\Big\}.
\end{equation*}

The problem just described is a particular one-dimensional case of the model considered by Ansini et al.~in an early paper \cite{Ansini_Braides_Chiado_2003}, where also the general higher-dimensional case is treated with some technical restrictions on the parameters $\e$ and $\delta$.
Related papers are those by Dirr, Lucia, and Novaga \cite{MR2231252,MR2463798}, Cristoferi, Fonseca, and coworkers \cite{MR4633768,MR4898688,MR4014392}, and Choksi et al.~\cite{MR4404852}. In particular, the methods in the papers by Cristoferi, Fonseca et al., also treating oscillations in the double-well part, allowed one to remove the restrictions in \cite{Ansini_Braides_Chiado_2003}.  The papers \cite{MR3883579,MR4955662,MR2679586,MR4409821,MR4905549,MR4179805,MR4280521,MR4419610,MR4668552,MR4941931} deal with related issues as well. We also mention the extension to the random case, for example in \cite{MR2515785,MR4598800}.

\bigskip
In this paper, we consider a prototypical case, when the gradient term is substituted by a nonlocal Gagliardo seminorm of arbitrary order. We only treat the one-dimensional case in order not to superpose the difficulties of the fractional setting with the technicalities due to homogenization issues for functionals defined on interfaces \cite{MR1070482} (we refer to \cite{MR3748585, Braides-Oleinik-Picerni} for examples of treatment of fractional phase-transition problems in the higher-dimensional homogeneous case). We also deal with non-even $W$, for which positive jumps and negative jumps give different surface tensions, but we will omit this technicality from the introduction.

The functionals we consider are of the form
\begin{equation}\label{i12}
F_{\e,\delta}(u)= \frac1\e\int_0^1 W(u)\,\de x+\e^{2(k+s)-1}\int_0^1\int_0^1a\Big(\frac{x}\delta,\frac{y}\delta\Big)
\frac{|u^{(k)}(x)-u^{(k)}(y)|^2}{|x-y|^{1+2s}}\de x\,\de y,
\end{equation}
defined on the fractional Sobolev space $H^{k+s}((0,1))$, with $k\in\mathbb N$, $s\in (0,1)$ and $k+s>1/2$. We mention that the critical case $k=0$ and $s=\frac{1}{2}$ will be addressed in \cite{Braides-Abdilaziz}, with different methods and results, due to the logarithmic degeneracy.
The function $a$ is $1$-periodic in each variable, and it is not restrictive to suppose it is symmetric.
In the case $a=1$, the $\Gamma$-limit of such functionals has been computed by Savin and Valdinoci 
\cite{SAVIN2012479} (see also \cite{PalatucciVincini}) in the case $k=0$ and by Solci \cite{Solci_2024} in the general case, and gives a sharp-interface limit with surface tension
\begin{eqnarray}\label{i13}\nonumber
&&\hskip -1cm m^{k+s}=\inf\Big\{\int_{-\infty}^{+\infty} W(u)\,\de x+\int_{-\infty}^{+\infty}\int_{-\infty}^{+\infty} \frac{|u^{(k)}(x)-u^{(k)}(y)|^2}{|x-y|^{1+2s}}\de x\,\de y :\\[3pt] \nonumber
&&\hskip 6cm
u\in H^{k+s}_{\rm loc}(\mathbb R), \lim_{t\to \pm \infty} u(t)=\pm1
\Big\}\\
\end{eqnarray}
described by a {\em fractional optimal-profile problem}.

The asymptotic analysis of the functionals in \eqref{i12}, even if presenting the same three separate regimes, brings about significant differences from that of functionals in \eqref{i2}. By comparison with the results in \cite{Solci_2024}, the domain is $BV((0,1);\{\pm 1\})$, and we prove that the limits are still sharp-interface functionals. The behavior of the functionals $F_{\e,\delta}$ is again described by their surface tensions. 

In the {\em subcritical regime}, assuming that the function $a$ is continuous, we obtain that the surface tension of the limit functional is
\begin{equation}\label{i23}
m^{k+s}_\infty= a_{\inf}^{1/(2(k+s))} m^{k+s}, \quad\hbox{ with } a_{\inf}= \min\big\{a(\tau,\tau):\tau\in\mathbb R\big\} .
\end{equation}
%Note that this formula can be extended to non-equal periods; that is, with $a(x,y)$  $T_1$-periodic in $x$ and $T_2$-periodic in $y$. If $T_1/T_2$ is irrational then
%m^{k+s}_\infty= (\min a)^{\frac1{2(k+s)}} m^{k+s}.%
Formula \eqref{i23} highlights the additional {\em concentration} on the diagonal $x=y$, beside the {\em separation of scales} also present in the gradient case. This can be understood as a concentration of the transition layer on a scale $\e$, with $\e\ll\delta$, so that the term  $a(\frac{x}\delta,\frac{y}\delta)$ can be considered as equal to a constant $a(\tau,\tau)$ for some $\tau$. The appearance of the $2(k+s)$-th root is explained as in \eqref{i4}
by writing 
\begin{eqnarray}\label{i24}\nonumber
&&\hskip-.5cm\frac1\e\int_0^1 W(u)\,\de x+\e^{2(k+s)-1} a(\tau,\tau)\int_0^1\int_0^1\frac{|u^{(k)}(x)-u^{(k)}(y)|^2}{|x-y|^{1+2s}}\de x\,\de y\\
&&=\nonumber
\big(a(\tau,\tau)\big)^{\frac1{2(k+s)}}\Bigg(\,
\frac{1}{\e\big (a(\tau,\tau)\big)^{\frac1{2(k+s)}}}\int_0^1 W(u)\,\de t\\
&&\quad+\big(\e\big(a(\tau,\tau)\big)^{\frac1{2(k+s)}}\big)^{2(k+s)-1}\int_0^1\int_0^1\frac{|u^{(k)}(x)-u^{(k)}(y)|^2}{|x-y|^{1+2s}}\de x\,\de y\Bigg),
\end{eqnarray}
so that formally one can consider first the limit as $\e\to 0$ and then the limit as $\delta\to0$, resulting in \eqref{i23} by an optimization on $\tau$. 

Conversely, in the {\em supercritical regime}, the {\em separation of scales} entails a first limit as $\delta\to 0$ with $\e$ fixed, and a corresponding {\em homogenization} of the non-local term. 
Note that in general the $\Gamma$-limit of a family of double-integral functionals may not even be representable as a measure \cite{MR4690560, MR4201441, Mora_Corral-Tellini}. 
Nevertheless, in our case, it turns out to be a functional of the same form with the constant equal to the {\em average of $a$} in the place of the function $a$. 
The lower bound can be obtained using the lower semicontinuity of the double integral when computed on $\{(x,y): |x-y|>\eta\}$ and then letting $\eta\to 0$, while an upper bound is obtained as a pointwise limit if $u\in C^2$, and then arguing by density. The final surface tension is then given by
\begin{equation*}
m^{k+s}_0= \bar a^{1/(2(k+s))}m^{k+s}, \quad\hbox{ with } \bar a=\int_0^1\int_0^1 a(x,y)\,\de x\, \de y,
\end{equation*}
the exponent again being obtained by the scaling argument in \eqref{i24}.
We note the sharp contrast with the gradient case \eqref{i5}, where the harmonic mean substitutes the average of $a$. This discontinuity is coherent with the fact that minimum problems are singular as $s\to 0^+$ or $s\to 1^-$ as remarked in \cite{Solci_2024} using the limit analysis of Bourgain, Brezis, and Mironescu \cite{Bourgain_Brezis_Mironescu} and Maz'ya and Shaposhnikova \cite{M-Sh} and the results in \cite{Bru_Don_Sol_24}.

In the {\em critical case}, there is an analogous result, as for the gradient case, showing a competition between oscillations and transition layer at scale $\e$. 
If $\delta/\e\to\lambda\in(0,+\infty)$, then, assuming $W$ even, 
\begin{eqnarray*}\nonumber
&&\hskip-1cm m^{k+s}_\lambda= \inf\Big\{\int_{-\infty}^{+\infty}W(u)\,\de x+\int_{-\infty}^{+\infty}\int_{-\infty}^{+\infty}a\Big(\frac x\lambda,\frac y\lambda\Big)\frac{|u^{(k)}(x)-u^{(k)}(y)|^2}{|x-y|^{1+2s}}\de x\,\de y\, :\\
&&\hskip6cm\, u\in H^{k+s}_{\rm loc}(\mathbb R), \lim_{t\to \pm \infty} u(t)=\pm1
\Big\}.\nonumber
\end{eqnarray*}

Throughout the rest of the paper, we denote by $C$ a generic positive constant whose exact value may change from line to line. The constant $C$ depends on the problem parameters $k,s, \alpha_W,\beta_W,\alpha_a, \beta_a$, and possibly on additional parameters explicitly present in the statement of the results. 

\section{Results}
We introduce precise notions of double-well potential and homogenization kernel, recall the definition of fractional Sobolev spaces, and then state our main results.
%\begin{Definition}\label{def:dw}
    We say that a real continuous function $W$ is a regular \emph{double-well potential}, if
    \begin{enumerate}
     \item there exist $\alpha_W,\beta_W>0$ such that 
     \[\alpha_W(1-\abs{z})^2\leq W(z)\leq \beta_W(1-\abs{z})^2,\]
     for $\abs{z} \leq 2$;
     \item $\inf_{\{\abs{z}\ge 2\}} W(z) >0$.
\end{enumerate}
%\end{Definition}
%\begin{Definition}
%\label{def:hom kernel}
We call a measurable function $a\colon \R^2\to \R$ a \emph{homogenization kernel} if:
    \begin{enumerate}
    \item $a$ is 1-periodic in both variables,
    \item $\alpha_a =\operatorname{ess}\inf a>0$, $\beta_a=\operatorname{ess}\sup a<+\infty$.
\end{enumerate}
%\end{Definition}
A \emph{uniform family of homogenization kernels} is a family $(a_\e)_\e$ of homogenization kernels such that all the kernels of the family can be bounded by two constants $\alpha_a\equiv \alpha_{a_{\scriptstyle \e}}>0$, $\beta_a\equiv \beta_{a_{\scriptstyle\e}}<+\infty$.

\begin{Definition}
    Let $s \in (0,1)$ and $I$ be an open interval, we say that $u\in H^s(I)$ if $u\in L^2(I)$ and
    \[[u]^2_{H^s(I)} \coloneqq \int\int_{I^2} \frac{\abs{u(x)-u(y)}^2}{\abs{x-y}^{1+2s}}\,\de x\,\de y < \infty.\]
    $[\cdot]_{H^s(I)}$ is called the \emph{Gagliardo seminorm}.

    For $k\in \N$, we further say that $u\in H^{k+s}(I)$ if $u \in H^k(I)$ (the usual Sobolev space) and $u^{(k)} \in H^s(I)$. We denote $[u]_{H^{k+s}(I)} \coloneqq [u^{(k)}]_{H^s(I)}$. The space $H^{k+s}(I)$, endowed with the norm $||\cdot||_{H^{k+s}(I)}= ||\cdot||_{L^2(I)} + [\cdot]_{H^{k+s}(I)}$, is a Hilbert space. 
\end{Definition}
 
We fix a double-well potential $W$ and  {$k\in\N$ and $s\in(0,1)$ such that $k+s>\frac{1}{2}$} once for all. Given a uniform family of homogenization kernels $(a_\e)_\e$, we consider the functionals
\begin{align}
    \label{eq:def_F}\nonumber
    F^{a_{\scriptstyle \e}}_{\e }(u,I)&= \frac{1}{\e}\int_I W\big(u(x)\big)\,\de x \\
    &\qquad + \e^{2(k+s)-1} \int_I \int_{I}a_\e(x,y)\frac{|u^{(k)}(x)-u^{(k)}(y)|^2}{|x-y|^{1+2s}}\de x\,\de y,
\end{align}
defined on all $u\in H^{k+s}(I)$, $I$ an open interval. 

When $I=(0,1)$ we simply write $F^{a_{\scriptstyle \e}}_{\e}(u)$; when 
$a_\e(x,y)=a\left(\frac{x}{\del},\frac{y}{\del}\right)$
is a family of rescalings, for some $\delta\equiv \delta(\e)$, we  write $F_{\e,\delta}$ in place of $F^{a_{\scriptstyle \e}}_{\e}$.

In any case, such a family of functionals satisfies an equicoercivity property for $\e \to 0$, regardless of the choice of $\delta$, that readily follows from \cite{Solci_2024}.
\begin{Proposition}[Equicoercivity]
    \label{thm:compactness}
    Let $(a_\e)_\e$ be a uniform family of homogenization kernels and let $\{u_\e\}_{\e}$ be a family in $H^{k+s}((0,1))$ such that
 $\sup_\e F^{a_{\scriptstyle \e}}_{\e} (u_\e) < +\infty$. Then, there exist a function $u \in BV((0,1);\{\pm 1\})$ and a
 subsequence $\e_r \rightarrow 0$, independent of the homogenization kernels, such that $u_{\e_r} \rightarrow u$ in measure.
\end{Proposition}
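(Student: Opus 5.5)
The plan is to reduce everything to the homogeneous case $a\equiv 1$ treated in \cite{Solci_2024}, using only the uniform bounds on the kernels. Since $a_\e\geq\alpha_a>0$ almost everywhere, for every $u\in H^{k+s}((0,1))$ we have
\[
F^{a_{\scriptstyle \e}}_{\e}(u)\;\geq\; \min\{1,\alpha_a\}\, F^{1}_{\e}(u),
\]
where $F^1_\e$ denotes the functional \eqref{eq:def_F} with $a_\e\equiv 1$. The constant $\min\{1,\alpha_a\}$ is independent of $\e$ because the family is uniform. Therefore $\sup_\e F^{a_{\scriptstyle \e}}_{\e}(u_\e)<+\infty$ implies
\[
\sup_\e F^1_\e(u_\e)\leq \frac{1}{\min\{1,\alpha_a\}}\,\sup_\e F^{a_{\scriptstyle \e}}_{\e}(u_\e)<+\infty.
\]

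Next I would invoke the compactness result of \cite{Solci_2024} (and \cite{SAVIN2012479} for $k=0$) for the unweighted functionals $F^1_\e$, with $k+s>1/2$. It states that families with equibounded energy admit a subsequence converging in measure (indeed in $L^1$, after truncation) to some $u\in BV((0,1);\{\pm1\})$.

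This also explains why the subsequence is independent of the kernels. The subsequence is extracted from the bound on $F^1_\e(u_\e)$, which involves only $W$, $k$, $s$ and $\alpha_a$, so the construction never uses the specific kernels $a_\e$. In other words, the extraction is performed for the sequence $(u_\e)$ under a kernel-free energy bound.

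If one prefers not to cite the compactness result as a black box, I would sketch its mechanism, and this is where the main obstacle lies. The potential term gives $\int W(u_\e)\leq C\e$, so $u_\e$ is close to $\pm1$ in measure. To bound the number of transitions, one rescales $v(t)=u_\e(\e t)$ on intervals of length of order $\e$. Any such interval on which $v$ passes from near $-1$ to near $1$ costs at least a fixed positive amount of rescaled energy. For $k=0$ this follows from the fractional Gagliardo seminorm. For $k\geq1$ it needs an interpolation or Poincaré-type estimate controlling oscillation by $W$ plus the $H^{k+s}$ seminorm, valid because $k+s>1/2$ makes $H^{k+s}$ embed into continuous functions locally. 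The nonlocality forces one to use disjoint intervals and to discard the cross interactions, which is legitimate since the integrand is nonnegative. This yields a uniform bound on the number of transitions, hence BV bounds on a suitable truncated or piecewise-constant modification, and then Helly's theorem gives the limit. This step is exactly the content of \cite{Solci_2024}, so in the actual proof I would simply cite it after the comparison inequality above.
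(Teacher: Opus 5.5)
Your proposal is correct and follows the paper's proof. The paper also uses the bound $\min\{\alpha_a,1\}\,G_\e(u)\le F^{a_{\scriptstyle \e}}_{\e}(u)$, where $G_\e$ is the unweighted ($a\equiv 1$) functional, to get $\sup_\e G_\e(u_\e)<+\infty$, and then cites the equicoercivity result (Theorem 9) of \cite{Solci_2024}. Your extra sketch of the compactness mechanism is not needed for the argument.
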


\begin{proof}
Let us consider the functional $G_\e$ defined by
\begin{equation}
\label{funzionale Solci}
    G_\e(u)= \frac{1}{\e}\int_0^1 W(u(x))\,\de x +\e^{2(k+s)-1}\int_0^1\int_0^1\frac{|u^{(k)}(x)-u^{(k)}(y)|^2}{|x-y|^{1+2s}}\de x\,\de y.
\end{equation}
Thanks to the uniform boundedness of $a_\e$, we have for every choice of $\delta$
$$
\min\{\alpha_a,1\} G_\e(u) \leq F^{a_{\scriptstyle \e}}_{\e}(u),
$$
thus $\sup_\e G_\e(u_\e) < +\infty$, and this yields the desired subsequence $u_{\e_r}\to u$ thanks to Theorem 9 in \cite{Solci_2024}, stating the equicoercivity property for the functionals $G_\e$.
\end{proof}

In \cite{Solci_2024} it is further shown that the homogeneous functional \eqref{funzionale Solci} $\Gamma$-converges with respect to the convergence in measure:
\[
\Gamma\text{-}\lim_{\e\to 0} G_\e(u) = m\#S(u),\quad \text{for } u \in BV\big((0,1),\{\pm 1\}\big),
\]
where the jump energy density $m\equiv m^{k+s}$ is defined by \eqref{i13}, and $S(u)$ denotes the jump set of $u$, with $\# S(u)$ its cardinality. From now on, we will omit the dependence of $m$ on the regularity exponent $k+s$, to be thought of as fixed. We will also need to distinguish the directions of the jumps, and we thus introduce the following notation
\begin{equation}
\label{def: S^pm}
    S^\omega(u) = \left\{t\in S(u)\,|\, \sgn\left(u(t^+)-u(t^-)\right) = \omega\right\}, \quad \omega\in\{\pm 1\},
\end{equation}
for the sets of ascending ($\omega=1$) or descending ($\omega=-1$) jumps.

The main result of the paper can now be stated as follows.
\begin{Theorem}
\label{thm: result}
Let $W$ be a regular double-well potential, let $a$ be a homogenization kernel, and let $F_{\e,\delta}$ be defined as in \eqref{eq:def_F}, with $\delta = \delta(\e)\to 0$ as $\e$ tends to $0$. 
For every $u\in BV((0,1),\{\pm 1\})$  we have the following $\Gamma$-limits with respect to the convergence in measure, depending on the behavior of the ratio $\delta/\e$, as $\e, \delta=\delta(\e) \to 0$:
    \begin{enumerate}
        \item {\em (critical case)} For $\lambda \in (0,+\infty)$,
        \begin{equation}\label{result: case 1}
            \underset{\substack{\e\to 0 \\ \del/\e\to \lambda}}{\Glim}F_{\e,\del}(u)=\sum_{\omega\in\{\pm 1\}} m^\omega_\lambda \# S^\omega(u),
        \end{equation}
        where the jump-sets $S^\omega(u)$, $\omega\in \{\pm 1\}$, are defined  in \eqref{def: S^pm} and the \emph{transition energy} of a jump is
\begin{eqnarray*}\nonumber
&&\hskip-1.1cm m^{\omega}_\lambda= \inf\bigg\{\!\int_{-\infty}^{+\infty}\mkern-6muW(u)\,\de x+\int_{-\infty}^{+\infty}\mkern-6mu\int_{-\infty}^{+\infty}\mkern-6mu a\Big(\frac x\lambda,\frac y\lambda\Big)\frac{|u^{(k)}(x)-u^{(k)}(y)|^2}{|x-y|^{1+2s}}\de x\,\de y\,\colon\\
&&\hskip5.4cm u\in H^{k+s}_{\mathrm{loc}}(\mathbb R), \,\lim_{t\to \pm \infty} u(t)=\pm \omega
\bigg\}.\nonumber
\end{eqnarray*}

        \item {\em (supercritical case)}
        \begin{equation}
        \label{result: case 2}
            \underset{\substack{\e\to 0 \\ \del/\e\to 0}}{\Glim}F_{\e,\del}(u)=\Big(\int_0^1\int_0^1 a(x,y)\,\de x\,\de y\Big)^\frac{1}{2(k+s)}m\,\# S(u),
        \end{equation}
        where $m$ is as in \eqref{i13}.
        
        \item  {\em (subcritical case)} Assuming the homogenization kernel $a$ to be continuous,
        \begin{equation}
        \label{result: case 3}
              \underset{\substack{\e\to 0 \\ \del/\e\to +\infty}}{\Glim}F_{\e,\del}(u)= \Big(\inf_{t\in(0,1)} a(t,t)\Big)^\frac{1}{2(k+s)}m\, \#S(u),
        \end{equation}
        where $m$ is again as in \eqref{i13}.
    \end{enumerate}
\end{Theorem}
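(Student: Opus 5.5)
Since the domain of every $\Gamma$-limit is $BV((0,1);\{\pm1\})$ by Proposition~\ref{thm:compactness}, and every $u$ there has finitely many jumps, we reduce each case to a lower bound localized near each jump point and an upper bound built by gluing optimal profiles. For the lower bound, take $u_\e\to u$ in measure with bounded energy. Around each $t_i\in S(u)$ choose disjoint intervals $I_i=(t_i-\eta,t_i+\eta)$. Drop the nonnegative interaction terms between different $I_i$ and estimate $F_{\e,\delta}(u_\e)\ge\sum_i F_{\e,\delta}(u_\e,I_i)$. Rescale $x=t_i+\e \xi$ on each $I_i$. The potential term and the Gagliardo term then become the corresponding functional on $(-\eta/\e,\eta/\e)$ with kernel $a\big(\tfrac{t_i+\e\xi}{\delta},\tfrac{t_i+\e\eta'}{\delta}\big)$, and $u_\e$ rescaled passes from near $-\omega$ to near $\omega$. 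A truncation/modification lemma is needed to turn such finite-window functions into competitors for the infinite-line problem with an $o(1)$ error. In the homogeneous case this is available in \cite{Solci_2024}, and it transfers verbatim because $a$ is bounded between $\alpha_a$ and $\beta_a$. The lemma joins $u$ to the constants $\pm1$ on a long interval where $u$ is close to $\pm1$, paying a tail cost controlled by the $H^{k+s}$ interaction of far regions. The upper bound uses the same lemma: take near-optimal profiles that are exactly $\pm1$ outside a compact set, place them at the jumps, and use $\pm1$ elsewhere. Long-range cross terms between profiles vanish as $\e\to0$, since $u^{(k)}$ is constant far away and the kernel is integrable off the diagonal at distance $\gg\e$.

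In the critical case, $\delta/\e\to\lambda$ and the rescaled kernel is $a\big(\tfrac{t_i}{\delta}+\tfrac{\e}{\delta}\xi,\dots\big)$. By periodicity this is a translate by $\tau_\e=t_i/\delta \bmod 1$ of $a(\cdot/\lambda_\e,\cdot/\lambda_\e)$. The infimum $m^\omega_\lambda$ is translation invariant in $\tau$, since translating the profile in $\xi$ compensates the shift. It remains to show continuity of the minimum value in $\lambda_\e=\delta/\e\to\lambda$. I would handle this by the dilation $\xi\mapsto(\lambda/\lambda_\e)\xi$, which changes $W$- and Gagliardo terms by factors tending to $1$. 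In the upper bound I would instead choose the profile position to match the translate exactly.

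In the supercritical case, I first prove that for fixed-scale (rescaled) windows the kernel $a(\xi/\mu,\eta/\mu)$ with $\mu=\delta/\e\to0$ converges weakly$^*$ to $\bar a$. The lower bound follows from lower semicontinuity on $\{|\xi-\eta|>\rho\}$: there $|v^{(k)}(\xi)-v^{(k)}(\eta)|^2|\xi-\eta|^{-1-2s}$ converges strongly in $L^1_{\rm loc}$ along subsequences where $v_\e^{(k)}\to v^{(k)}$ strongly, which follows by compact embedding of $H^{k+s}$ on bounded windows. Weak-strong convergence then gives the $\bar a$ energy, and we let $\rho\to0$. For the upper bound with a fixed smooth compactly-modified profile, the integrand away from the diagonal is fixed, so the weak convergence gives the limit directly. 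Near the diagonal the contribution is $O(\rho^{2-2s})$ uniformly, using a $C^{k+1}$ profile, so density suffices. The scaling identity in the introduction then converts $\bar a$ into the factor $\bar a^{1/(2(k+s))}$ for the homogeneous $m$.

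In the subcritical case, $\e/\delta\to0$, so the kernel at $(t_i+\e\xi,t_i+\e\eta)$ is $a(\tau_\e+o(1)\xi,\tau_\e+o(1)\eta)$, locally uniformly in $(\xi,\eta)$ by continuity of $a$ and periodicity (uniform continuity). Hence on windows of size $R$ it is at least $a_{\inf}-o(1)$. The long-range part $|\xi-\eta|$ large must also be bounded, and there I would use $a\ge$ its local value only inside windows. So the truncation lemma is applied on scale $R$ with $R\e\ll\delta$ and $R\to\infty$, giving a lower bound of $a_{\inf}^{1/(2(k+s))}m$. For the upper bound, place the jump near a point $\delta(\tau^*+j)$, with $\tau^*$ a minimizer of $a(\tau,\tau)$, choose $j$ so that this point is within $\delta$ of $t_i$, and shift the jump location accordingly. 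Since the shift is $O(\delta)\to0$, the recovery sequence still converges in measure to $u$.

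The main obstacle is the truncation/gluing lemma in the nonlocal higher-order setting, that is, showing that cutting off on a finite window costs $o(1)$ uniformly with respect to oscillating kernels. I would prove it with a cutoff $\varphi$ interpolating to $\pm1$ where $u$ is $L^2$-close to $\pm1$, chosen by averaging over many dyadic annuli. The cross terms would be estimated by fractional Leibniz-type inequalities, relying only on the bounds $\alpha_a\le a\le\beta_a$.
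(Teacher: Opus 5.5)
\textbf{Verdict: there is a genuine gap, in the lower bounds for $\lambda\in\{0,+\infty\}$.}

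Your overall architecture matches the paper: localize at each jump, rescale by $\e$, flatten to $\pm1$ and control the tails, and glue compactly modified near-optimal profiles for the upper bound. You also use Riemann--Lebesgue for $\lambda=0$ and a minimizer of $a(\tau,\tau)$ for $\lambda=\infty$, as the paper does. Your critical case is essentially the paper's argument: translation invariance of $m^\omega_{\lambda,r}$ in $r$, plus continuity in $\lambda$ by dilation.

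\textbf{The supercritical lower bound.} You tacitly assume that the transition layer of $w_\e(\xi)=u_\e(t_i+\e\xi)$ lies in a window of size independent of $\e$, and nothing in your plan ensures this. All that is known (Remark~\ref{rmk:equibounded_intervals}) is that $\big||w_\e|-1\big|<\eta$ outside at most $M$ intervals of total length at most $C_\eta$. These intervals may sit at mutual distances diverging with $\e$. Your truncation lemma only makes the function equal to $\pm\sgn$ for $|\xi|\gtrsim\theta\tau/\e$. Two examples, with $R_\e\to\infty$ and $\e R_\e\to0$:
a single layer at $R_\e$; or a $-1\to1$ layer at $0$, a $1\to-1$ layer at $\xi=5$, and a $-1\to1$ layer at $R_\e$.
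In these cases the limit you extract by compactness on bounded windows is constant, respectively goes from $-1$ back to $-1$. It is then not a competitor for $m^\omega(\bar a)$, and the weak--strong argument on $\{|\xi-\eta|>\rho\}$ yields nothing.

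Even the local $H^{k+s}$ bounds you invoke are not free. The energy controls only $W(w_\e)$ and the seminorm, and the paper needs Theorem~\ref{coercivity} (via Lemma~\ref{lemma:derivative_bounds} and interpolation) to get uniform $L^\infty$ and $H^k$ bounds.

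The paper's remedy is a flatten-and-excise step. On each interior interval of length $\ge L$ where $\big||v^j_\e|-1\big|\le\eta$, it flattens (Remark~\ref{rmk:multiple_flattening}, case (2)). It then cuts out a central piece whose length is an integer multiple of the rescaled period $\delta/\e$. This leaves the kernel unchanged on both sides, and the new cross interaction costs $O(L^{-2s})$, or $O(L^{1-2s})$ for $k=0$ using Lemma~\ref{lemma:sign_v_regularization}. The result is a competitor equal to $s_j\sgn$ outside $(-T,T)$ with $T$ independent of $\e$, to which compactness and Riemann--Lebesgue on $\R^2\setminus D_R$ apply.

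A concentration--compactness selection would also repair your route. Pass to a subsequence on which each gap between bad intervals either stays bounded or diverges, then re-center at a cluster going from $-\omega$ to $\omega$. This is harmless because translated kernels still converge weakly$^*$ to $\bar a$. Some such argument is indispensable.

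\textbf{The subcritical lower bound.} The same defect breaks this case. The kernel is close to $a(\tau_\e,\tau_\e)$ only on windows $|\xi|,|\eta|\le R$ with $R\e\ll\delta$. The layer may be spread over distances $\gtrsim\delta/\e$ where the kernel has changed, and off-diagonal values $a(x,y)$ can lie below $a_{\inf}$. Moreover, you cannot ``apply the truncation lemma on scale $R$'', since you do not know that $w_\e$ is close to $\pm1$ at distance of order $R$ from the origin.

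The paper never localizes the layer. On the whole strip $D_R=\{|x-y|<R\}$ it has $a^j_\e(x,y)\ge a^j_\e(x,x)-1/N\ge a_{\inf}-1/N$ for every $x$, by uniform continuity and $\e R/\delta\to0$. The part off the strip is $O(R^{-2s})$ uniformly in $\e$ because $\|(v^j_\e)^{(k)}\|_{L^2(\R)}\le C$. For $k=0$ one uses instead $\|v^j_\e-\sigma^j_\e\|_{L^2(\R)}\le C$ and the translation estimate of Lemma~\ref{sigma Hs bound}, giving $O(R^{-2s}+R^{1-2s})$.

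This uniform coercivity of the rescaled energies on diverging domains, Theorem~\ref{coercivity}, is the missing ingredient in your plan. The truncation lemma you single out as the main obstacle is the more routine part.
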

% We observe that in all three regimes the shape of the transition energy is in agreement with the previous results in the local case (see \cite{Ansini_Braides_Chiado_2003} or \cite{AnsiniBraidesPiat}), with the only exception that in \eqref{result: case 2} the coefficient depends on the homogenization kernel average, instead of its harmonic average.

To study the optimal profile problems in the definition of $m^\pm_\lambda$, and to unify the notation in all cases, it is useful to introduce the following definitions. For any homogenization kernel $a$, we define the \emph{rescaled functionals}

\begin{equation}\label{eq:def_Phi}
    \Phi^a(v, I) = \int_I W\big(v(x)\big)\,\de x +  \int_I\int_I a(x,y ) \frac{|v^{(k)}(x)-v^{(k)}(y)|^2}{|x-y|^{1+2s}}\de x\,\de y,
\end{equation}
for $v\in H^{k+s}_{\mathrm{loc}}(\R)$, and we adopt the conventions
\[\Phi^a(v)\equiv\Phi^a(v, \R), \quad  \Phi^a_T(v)\equiv  \Phi^a\big(v, (-T,T)\big).\]

\begin{Remark}
    We note that, for any family of homogenization kernels $a_\e$, fixed $t\in (0,1)$ and given $\tau>0$ small enough 
\begin{equation*}
    F^{a_{\scriptstyle \e}}_{\e}\big(u, (t-\tau, t+\tau)\big)=\Phi^{A_{\scriptstyle\e}}_{\tau/\e}(v),
\end{equation*}
if $v(x)=u(\e x+t)$ and $A_\e(x,y)=a_\e(\e x +t,\e y+t)$.
\end{Remark}
\begin{Definition}[Transition energies]
\label{def:transition_energies}
Given a homogenization kernel $a$ and $\omega\in \{\pm 1\}$, we define the \emph{transition energies} at finite and infinite length respectively as
    \begin{align*}
        m^\omega(a,T) &\coloneqq \inf \big\{\Phi^{a}(v)\;\big|\; v\in H^{k+s}_{\operatorname{loc}}(\R), \; v(x) = \omega \sgn(x)  \text{ for } \abs{x} > T\big\},\\
         m^{\omega}(a)  &\coloneqq \inf \big\{\Phi^{a}(v)\;\big|\; v\in H^{k+s}_{\operatorname{loc}}(\R), \; \lim_{x\to \pm\infty}v(x) = \pm \omega \big\}.
    \end{align*}
We also introduce the notations
\begin{equation*}
    m_{\lambda,r}^{\omega}(a)\coloneqq m^\omega\Big(a\Big(\frac{\cdot+r}{\lambda},\frac{\cdot+r}{\lambda}\Big)\Big), \quad m_\lambda^{\omega}(a)\coloneqq m^\omega\big(a(\cdot\,/\lambda,\, \cdot\,/\lambda)\big),
\end{equation*}
 for $\lambda \in (0,+\infty)$, and
 \begin{equation*}
\begin{split}
    m^\omega_0(a)=m^\omega(\bar a), \quad  &\bar a=\int_{(0,1)^2}a(x,y)\,\de x\,\de y,\\
    m^\omega_\infty(a)=m^\omega(a_{\inf}),\quad  &a_{\inf}=\inf_{t\in \R}\{a(t,t)\}.
\end{split}
\end{equation*}
The $T$-dependent versions $m_{\lambda,r}^{\omega}(a, T)$ and so on are defined analogously, by considering $m^\omega(\cdot\,, T)$ at the right-hand sides.
\end{Definition}
A few remarks are in order to shed some light on the above definitions.
\begin{Remark}\label{rmk:lambda_continuity} Let $\lambda\in (0,+\infty)$. First of all, it is clear that the definitions of $m^{\omega}_\lambda$ in the Theorem and $m^\omega_\lambda(a)$ above coincide. Moreover, at infinite length, the minimum loses its $r$ dependence, thus $m^{\omega}_{\lambda,r}(a)=m^{\omega}_\lambda(a).$
We also note that, after the change of variables $\tilde v(x)=v(\lambda x)$,
\begin{equation*}
    \begin{split}
        &\int_{-\infty}^{+\infty}\mkern-6mu W\big(v(x)\big)\,\de x +\mkern-6mu   \int_{-\infty}^{+\infty}\mkern-6mu \int_{-\infty}^{+\infty}\mkern-6mu a\Big(\frac{x}{\lambda},\frac{y}{\lambda} \Big) \frac{|v^{(k)}(x)-v^{(k)}(y)|^2}{|x-y|^{1+2s}}\de x\,\de y\\
        &=\lambda\int_{-\infty}^{+\infty}\mkern-6mu W\big(\tilde v(x)\big)\,\de x +  \lambda^{1-2(k+s)}\mkern-6mu \int_{-\infty}^{+\infty}\mkern-6mu \int_{-\infty}^{+\infty}\mkern-6mu  a(x,y) \frac{|{\tilde v}^{(k)}(x)-{\tilde v}^{(k)}(y)|^2}{|x-y|^{1+2s}}\de x\,\de y,\\
    \end{split}
\end{equation*}
from which it easily follows that $m^{\omega}_\lambda(a)$ is continuous with respect to the parameter $\lambda$.       
\end{Remark}

 \begin{Remark}
 \label{transition asymmetry} Let $a$ be any homogenization kernel and $W$ be any double-well potential. If either
 \begin{enumerate}
     \item $W$ is even, or
     \item $a$ is even in both variables,
 \end{enumerate}
 then the transition energies are independent of the direction of the jump:   $m^+(a)=m^-(a)$ and $m^+(a, T)=m^-(a, T)$ for every $T$.
 
Indeed, if $W$ is even, then $\Phi^{a}(v) = \Phi^{a}(-v)$. On the other hand, if $a$ is even, by a change of variable $\Phi^{a}(v(\,\cdot\,)) = \Phi^{a}(v(-\,\cdot\,))$. In particular, this is true for $\bar a$ and $a_{\inf}$, which are constant, and thus 
\[m_0\equiv m^\omega_0,\quad \text{and}\quad m_\infty\equiv m^\omega_\infty,\] are independent of the jump direction $\omega\in \{\pm 1\}$.
 \end{Remark}

\begin{Remark}
%\label{constant hom kernel}
 As noted in the introduction, we can relate $m_0$ and $m_\infty$ to $m\equiv m^{k+s}\equiv m(1)$, defined in \eqref{i13}, the transition energy of the homogeneous case discussed in \cite{Solci_2024}. 
    Indeed, the change of variable in Remark \ref{rmk:lambda_continuity} shows that, when $a(x,y)\equiv a\neq 0$  is constant:
    \[
    \begin{split}
        \Phi^a(v) &= \lambda \Phi^{a/\lambda^{2(k+s)}}(\tilde v)= a^{\frac{1}{2(k+s)}}\Phi^1(\tilde v),
    \end{split}
    \]
    where in the last ineqality we specialized to $\lambda=a^{1/(2(k+s))}$. Consequently,
    \[
        m_0 = (\bar a)^\frac{1}{2(k+s)}m,\qquad
        m_\infty = (a_{\inf})^\frac{1}{2(k+s)}m.
    \]
\end{Remark}

  This last remark allows us to recast even the $\Gamma$-limits \eqref{result: case 2} and \eqref{result: case 3} in the general form
  \begin{equation*}
            \underset{\substack{\e\to 0 \\ \del/\e\to \lambda}}{\Glim}F_{\e,\del}(u)=\sum_{\omega\in\{\pm 1\}} m^\omega_\lambda \# S^\omega(u), \quad \lambda\in [0,+\infty],
\end{equation*}
where, as just noted, for $\lambda\in \{0,+\infty\}$ the right-hand side is actually $m_\lambda \# S(u)$, independent of $\omega$.

\section{Preliminaries}
In this section, we recall some general properties of fractional Sobolev spaces (see \cite{Hitchhiker} and \cite{Leoni_frac_sob_sp} for a general treatment), and two useful results, inherited by comparison from \cite{Solci_2024}, concerning sequences $(u_\e)_\e$ \emph{bounded in energy}, that is, such that $\sup_\e F^{a_{\scriptstyle\e}}_{\e}(u_\e)<+\infty$, where $(a_\e)_\e$ is a fixed uniform family of homogenization kernels. The first of these results provides a bound in measure on higher derivatives, while the second bounds the number of \emph{transition intervals}, defined in \ref{def:transition_intervals}.

%The following embeddings hold.
%\begin{Proposition}\textcolor{red}{(serve?)}
 %   Let $0<s<s'<1$ and $k \in \N$ and $I$ an open interval, then
  %  \[H^{k+1}(I)\subset H^{k+s'}(I) \subset H^{k+s}(I) \subset H^k(I),\]
   % and the embeddings are continuous.
%\end{Proposition}
%\textcolor{red}{In this way, the spaces $H^{k+s}$ can be seen as interpolations.} 

The relationship between Sobolev spaces of different exponents can be described through interpolation inequalities. The first is from Theorem 1.25 and Exercise 5.9 in \cite{Leoni_frac_sob_sp}, the second is from Proposition 3 in \cite{Solci_2024} (see also \cite{Solci_2025}).
\begin{Proposition}
\label{ineq:fractional}
    Let $I \subseteq \R$ be an open interval and $s\in (0,1)$.
 There exists $C>0$ such that for all $u\in H^1(I)$ and $0< r<|I|$,
\begin{equation*}
    [u]_{H^s(I)}\leq C\big( s^{-\frac{1}{2}}r^{-s}|| u||_{L^2(I)}+(1-s)^{-\frac{1}{2}} r^{1-s} ||u'||_{L^2(I)}\big).
\end{equation*}
\end{Proposition}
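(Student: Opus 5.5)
We first reduce to a one-dimensional statement about the increment $u(x)-u(x+h)$ and then optimize in the splitting parameter $r$. By symmetry of the integrand,
\[
[u]_{H^s(I)}^2 = 2\int_0^{|I|}\frac{\norm{u(\cdot+h)-u(\cdot)}^2_{L^2(I_h)}}{h^{1+2s}}\,\de h,
\]
where $I_h=\{x\in I: x+h\in I\}$. This uses Fubini and the substitution $y=x+h$ with $h>0$. We split the $h$-integral into the near range $h\in(0,r)$ and the far range $h\in[r,|I|)$, with $0<r<|I|$.

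\textbf{Near range.} For $u\in H^1(I)$ the fundamental theorem of calculus gives $u(x+h)-u(x)=\int_0^h u'(x+t)\,\de t$. Applying Cauchy--Schwarz and then integrating in $x\in I_h$ yields
\[
\norm{u(\cdot+h)-u(\cdot)}_{L^2(I_h)}^2\le h^2\norm{u'}^2_{L^2(I)}.
\]
This is the standard difference-quotient estimate. One first proves it for smooth $u$ and then extends it by density in $H^1(I)$, or uses directly the absolutely continuous representative. Hence the near contribution is bounded by
\[
2\norm{u'}^2_{L^2(I)}\int_0^r h^{1-2s}\,\de h=\frac{r^{2-2s}}{1-s}\norm{u'}^2_{L^2(I)}.
\]

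\textbf{Far range.} Here we use the crude bound
\[
\norm{u(\cdot+h)-u(\cdot)}_{L^2(I_h)}^2\le 4\norm{u}^2_{L^2(I)}.
\]
The far contribution is then at most
\[
8\norm{u}^2_{L^2(I)}\int_r^\infty h^{-1-2s}\,\de h=\frac{4}{s}\,r^{-2s}\norm{u}^2_{L^2(I)}.
\]

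\textbf{Conclusion.} Adding the two ranges gives
\[
[u]_{H^s(I)}^2\le \frac{4}{s}r^{-2s}\norm{u}_{L^2(I)}^2+\frac{1}{1-s}r^{2-2s}\norm{u'}^2_{L^2(I)}.
\]
Taking square roots and using $\sqrt{A+B}\le\sqrt A+\sqrt B$ gives the claim with an absolute constant $C$ (for instance $C=2$). The constant is independent of $s$, $I$ and $r$, and the explicit factors $s^{-1/2}$ and $(1-s)^{-1/2}$ come out of the two one-dimensional $h$-integrals.

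There is no real obstacle; the argument is elementary. The only delicate point is justifying the difference-quotient bound for a general $H^1$ function, which follows from density of $C^\infty(\bar I)\cap H^1(I)$ in $H^1(I)$ on an interval, or from absolute continuity of the representative. One must also handle unbounded $I$, where $|I|=\infty$. In that case the far range is simply $[r,\infty)$, and the same computation applies verbatim.
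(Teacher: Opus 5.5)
Your proof is correct. Splitting the $h$-integral at $r$, you bound the near-diagonal part by $\|u(\cdot+h)-u(\cdot)\|_{L^2(I_h)}^2\le h^2\|u'\|_{L^2(I)}^2$ and the far part by $4\|u\|_{L^2(I)}^2$; this gives the estimate with the absolute constant $C=2$, for every $r>0$ and for bounded or unbounded $I$. The paper does not prove the statement itself; it only cites Theorem 1.25 and Exercise 5.9 of Leoni's book. Your proposal therefore supplies a self-contained proof of the cited estimate by the standard near/far-diagonal splitting, rather than a different route.
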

\begin{proof}
    Theorem 1.25 and  Exercise 5.9 in \cite{Leoni_frac_sob_sp}.
\end{proof}

\begin{Proposition}
\label{ineq:interpolation}
Let $k \geq 1$ be a natural number and $s \in (0,1)$.
 There exists a constant $C > 0$ such that for all $I \subset \R$ bounded intervals  and all
 $u \in H^{k+s}(I)$, the following interpolation inequality holds
    \begin{equation}\label{ineq:interp_solci}
       \|u^{(\l)}\|_{L^2(I)} \leq C \left(|I|^{-\l} \|u\|_{L^2(I)} + \|u\|^{1-\theta}_{L^2(I)}[u]^{\theta}_{H^{k+s}(I)}\right),
    \end{equation}
    for all $\l= 1, \dots, k$ and for $\theta = \frac{\l}{k+s}$.
\end{Proposition}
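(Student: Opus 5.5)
The inequality is invariant under dilations, so I would first reduce to a unit-interval statement, then localize to subintervals of a free length $r$, and finally optimize in $r$. To check the invariance, put $u_I(x)=u(a+|I|x)$ on $(0,1)$ for $I=(a,a+|I|)$. Then
\[
\|u^{(\l)}\|_{L^2(I)}=|I|^{\frac12-\l}\|u_I^{(\l)}\|_{L^2(0,1)},\qquad [u]_{H^{k+s}(I)}=|I|^{\frac12-(k+s)}[u_I]_{H^{k+s}(0,1)}.
\]
With $\theta=\l/(k+s)$, both terms on the right-hand side of \eqref{ineq:interp_solci} scale like $|I|^{\frac12-\l}$. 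Hence the constant $C$ does not depend on $I$, and it suffices to control everything in terms of unit-interval estimates.

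\emph{Step 1 (additive estimate on the unit interval).} I will prove that there is $C>0$ such that, for all $v\in H^{k+s}(0,1)$ and $\l=1,\dots,k$,
\[
\|v^{(\l)}\|_{L^2(0,1)}\le C\big(\|v\|_{L^2(0,1)}+[v]_{H^{k+s}(0,1)}\big).
\]
I expect this to be the main technical point, and I would argue by contradiction. Suppose there are $v_n$ with $\|v_n^{(\l)}\|_{L^2}=1$ and $\|v_n\|_{L^2}+[v_n]_{H^{k+s}}\to0$. Standard interpolation between $L^2$ and $H^k$ bounds the intermediate derivatives, so we may assume $\|v_n\|_{H^k}\le C'$. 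Then $(v_n)$ is bounded in $H^{k+s}(0,1)$. By the compact embedding $H^{k+s}(0,1)\hookrightarrow H^k(0,1)$, a subsequence converges in $H^k$ to some $v$. Since $\|v_n\|_{L^2}\to0$, we get $v=0$. This contradicts $\|v^{(\l)}\|_{L^2}=\lim_n\|v_n^{(\l)}\|_{L^2}=1$.

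A cleaner route is to use the equivalence of $\|\cdot\|_{H^{k+s}}$ with $\|\cdot\|_{L^2}+[\cdot]_{H^{k+s}}$. This holds because $[v]_{H^{k+s}}=0$ forces $v^{(k)}$ to be constant, so $v$ is a polynomial of degree at most $k$, a finite-dimensional space on which all norms are equivalent. By the scaling above, on any interval $J$ of length $r$ this yields
\[
\|u^{(\l)}\|_{L^2(J)}\le C\big(r^{-\l}\|u\|_{L^2(J)}+r^{k+s-\l}[u]_{H^{k+s}(J)}\big).
\]

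\emph{Step 2 (localization).} Given $0<r\le|I|$, I partition $I$ into $N\approx|I|/r$ intervals $J_i$ of lengths between $r$ and $2r$. I apply Step 1 on each $J_i$, square, and sum over $i$. Two facts close the sum: the squared $L^2$ norms are additive over the $J_i$, and
\[
\sum_i[u]^2_{H^{k+s}(J_i)}\le[u]^2_{H^{k+s}(I)},
\]
since the sets $J_i\times J_i$ are disjoint subsets of $I\times I$. This gives
\[
\|u^{(\l)}\|_{L^2(I)}\le C\big(r^{-\l}\|u\|_{L^2(I)}+r^{k+s-\l}[u]_{H^{k+s}(I)}\big)\qquad\text{for all }0<r\le|I|.
\]

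\emph{Step 3 (optimization).} If $[u]_{H^{k+s}(I)}=0$, then $u$ is a polynomial and the case $r=|I|$ suffices. Otherwise set $r_*=(\|u\|_{L^2}/[u]_{H^{k+s}})^{1/(k+s)}$, which balances the two terms. If $r_*\le|I|$, choosing $r=r_*$ gives $2C\|u\|^{1-\theta}_{L^2}[u]^{\theta}_{H^{k+s}}$. If $r_*>|I|$, choosing $r=|I|$ gives
\[
|I|^{k+s-\l}[u]_{H^{k+s}}\le r_*^{k+s}|I|^{-\l}[u]_{H^{k+s}}=|I|^{-\l}\|u\|_{L^2},
\]
so the whole right-hand side is bounded by $2C|I|^{-\l}\|u\|_{L^2}$. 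In both cases \eqref{ineq:interp_solci} follows.
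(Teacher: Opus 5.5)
Your argument is correct in substance, but it takes a different route from the paper, which proves nothing here: it imports the inequality directly as Proposition 3 of \cite{Solci_2024}. You instead give a self-contained derivation in four steps:
\begin{itemize}
\item an additive estimate on $(0,1)$, obtained by compactness;
\item transport of that estimate to intervals of length $r$ by dilation;
\item summation over a partition of $I$ into pieces of length comparable to $r$, using $\sum_i[u]^2_{H^{k+s}(J_i)}\le[u]^2_{H^{k+s}(I)}$;
\item optimization in $r\in(0,|I|]$.
\end{itemize}
This makes transparent why the constant is independent of $I$. It also shows that the lower-order term $|I|^{-\ell}\|u\|_{L^2(I)}$ is exactly the saturation case, where the balancing scale $r_*=(\|u\|_{L^2}/[u]_{H^{k+s}})^{1/(k+s)}$ exceeds $|I|$. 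The price is a non-explicit constant coming from the contradiction argument, whereas the citation costs one line.

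A few points to tighten in Step 1.
\begin{itemize}
\item When $\ell<k$, your normalization $\|v_n^{(\ell)}\|_{L^2}=1$ gives no bound on $\|v_n^{(k)}\|_{L^2}$. So ``interpolation between $L^2$ and $H^k$'' cannot by itself give $\|v_n\|_{H^k}\le C'$. Normalize instead by $\|v_n\|_{H^k(0,1)}=1$, which proves $\|v\|_{H^k}\le C(\|v\|_{L^2}+[v]_{H^{k+s}})$ for all $\ell$ at once; the same contradiction then goes through.
\item The ``cleaner route'' is not a proof on its own. Identifying the kernel of $[\cdot]_{H^{k+s}}$ with polynomials of degree at most $k$ yields norm equivalence only when combined with precisely this compactness argument (Peetre/Deny--Lions).
\item Invoke compactness in the version where the full $H^k$ norm is bounded. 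Classical Rellich handles $v,\dots,v^{(k-1)}$, which are bounded in $H^1$; fractional Rellich handles $v^{(k)}$, which is bounded in $H^s$. Do not cite the version for the norm $\|\cdot\|_{L^2}+[\cdot]_{H^{k+s}}$. The paper obtains that version in Theorem~\ref{thm:sobolev_embd} by combining the preceding propositions, this one included, with \cite{Hitchhiker}, so using it here would be circular.
\end{itemize}
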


\begin{proof}
    Proposition 3 in \cite{Solci_2024}.
\end{proof}

We will also need an improved Poincaré inequality from \cite{Solci_2024}, Lemma 4.
\begin{Proposition}
\label{ineq:mean_and_fractional} Let $I$ be a bounded interval.
   There exists a constant $C>0$ such that for every $u\in H^{s}(I)$ and any interval $J\subseteq I$, $2\abs{J} \geq \abs{I}$:
    \[\|u\|_{L^2(I)} \leq C\bigg(\frac{1}{\sqrt{\abs{I}}}\bigg|\int_Ju(x)\,\de x\bigg| + \abs{I}^{s}[u]_{H^s(I)}\bigg).\]

\end{Proposition}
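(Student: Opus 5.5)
The plan is to compare $u$ with its mean over $I$, control that deviation by the Gagliardo seminorm through a Jensen-type bound, and then replace the mean over $I$ by the mean over $J$ using $2|J|\ge|I|$. Write
\[
u_I=\frac{1}{|I|}\int_I u\,\de x,\qquad u_J=\frac{1}{|J|}\int_J u\,\de x .
\]

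\emph{Step 1 (fractional Poincaré for the mean over $I$).} By Jensen's inequality,
\[
\int_I|u(x)-u_I|^2\,\de x\le \frac{1}{|I|}\int_I\int_I|u(x)-u(y)|^2\,\de y\,\de x .
\]
For $x,y\in I$ we have $|x-y|\le |I|$, so $1\le |I|^{1+2s}|x-y|^{-1-2s}$. Inserting this gives
\[
\|u-u_I\|_{L^2(I)}\le |I|^{s}[u]_{H^s(I)} .
\]

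\emph{Step 2 (changing the mean).} By Cauchy--Schwarz on $J$, followed by $|J|\ge |I|/2$,
\[
|u_J-u_I|\le \frac{1}{|J|}\int_J|u-u_I|\,\de x\le |J|^{-1/2}\|u-u_I\|_{L^2(J)}\le \sqrt{2}\,|I|^{-1/2}\|u-u_I\|_{L^2(I)} .
\]

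\emph{Step 3 (assembling).} By the triangle inequality,
\[
\|u\|_{L^2(I)}\le \|u-u_I\|_{L^2(I)}+|I|^{1/2}|u_I|\le \|u-u_I\|_{L^2(I)}+|I|^{1/2}|u_J|+|I|^{1/2}|u_J-u_I| .
\]
The last term is at most $\sqrt2\,\|u-u_I\|_{L^2(I)}$ by Step 2. For the middle term, again using $|J|\ge|I|/2$,
\[
|I|^{1/2}|u_J|=\frac{|I|^{1/2}}{|J|}\Big|\int_J u\,\de x\Big|\le \frac{2}{\sqrt{|I|}}\Big|\int_J u\,\de x\Big| .
\]
Combining these with Step 1 yields the claim with the absolute constant $C=2$ (one can take $C=\max\{2,1+\sqrt2\}$). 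This constant is independent of $s$, $I$ and $J$.

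I do not expect any real obstacle here. The only points to check are the direction of the bound $|x-y|\le|I|$ in Step 1 and that the hypothesis $2|J|\ge|I|$ is used exactly where the mean over $J$ is compared with quantities on $I$.
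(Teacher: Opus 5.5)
Your proof is correct, and it differs from the paper mainly in that the paper gives no proof of this statement: it imports the inequality from Lemma 4 of \cite{Solci_2024}. You instead prove it directly, in three elementary steps:
\begin{itemize}
\item Jensen's inequality plus $|x-y|\le|I|$ give the fractional Poincar\'e--Wirtinger bound $\|u-u_I\|_{L^2(I)}\le |I|^{s}[u]_{H^s(I)}$.
\item Cauchy--Schwarz on $J$, together with $2|J|\ge|I|$, controls $|u_J-u_I|$.
\item The triangle inequality assembles the two.
\end{itemize}

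What your route buys, beyond being self-contained, is an explicit constant that is independent of $s$, $I$ and $J$. In fact $J$ only needs to be a measurable subset of $I$ with $|J|\ge|I|/2$. This uniformity in $I$ is exactly what the paper relies on when it applies the proposition on intervals of varying length: $(-2c'',2c'')$ in Lemma \ref{lemma:vanishing_tails}, and $I+(-2R,2R)$ in Theorem \ref{coercivity}. The statement as written only asserts some $C$ for a fixed $I$, so your proof supplies that uniformity.

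One small slip in the bookkeeping. Collecting terms gives coefficient $2$ on $|I|^{-1/2}\big|\int_J u\,\de x\big|$ and coefficient $1+\sqrt2$ on $|I|^{s}[u]_{H^s(I)}$. So $C=2$ does not suffice; the correct constant is $C=1+\sqrt2$, which is what your parenthetical $\max\{2,1+\sqrt2\}$ evaluates to.
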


We recall some classical results on compact embeddings between fractional Sobolev spaces of different exponents and Morrey's embeddings into Hölder spaces.
\begin{Theorem}\label{thm:sobolev_embd}
    Let $I$ be an interval. 
    \begin{enumerate}
        \item (Elementary embeddings) For $0<s<s'<1$ the embeddings 
        \[H^{k+1}(I)\hookrightarrow H^{k+s'}(I) \hookrightarrow H^{k+s}(I) \hookrightarrow H^k(I)\] are continuous. Moreover,  if $I$ is bounded, the embedding $H^{k+s}(I)\hookrightarrow H^k(I)$ is compact.
        \item (Morrey's embeddings) We have the following continuous embeddings.
        \begin{enumerate}
            \item If $k\geq 1$, we have that $H^{k+s}(I)\hookrightarrow C^{k-1,\frac{1}{2}}(I)$,
            in particular, there exists $C_{k,s}>0$ depending also on $I$ such that for any $\ell = 1,\dots, k$
            \[|u^{(\ell-1)}(x)-u^{(\ell-1)}(y)| \leq C_{k,s} ||u^{(\ell)}||_{L^2(I)}|x-y|^\frac{1}{2},\]
            \item if  $k=0$ and $s>\frac{1}{2}$, we have that $H^s(I)\hookrightarrow C^{0,s-\frac{1}{2}}(I)$, in particular there exists $C_{s}>0$ depending also on $I$ such that
            \[|u(x)-u(y)| \leq C_{s} [u]_{H^{s}(I)}|x-y|^{s-\frac{1}{2}}\]
        \end{enumerate} 
    \end{enumerate}
\end{Theorem}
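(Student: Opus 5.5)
The statement collects classical facts, so the plan is to reduce each item to a standard inequality and make the dependence of the constants explicit.

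\emph{Elementary embeddings.} The inclusion $H^{k+s}(I)\hookrightarrow H^k(I)$ holds by definition, with $\|u\|_{H^k}$ controlled by the $H^{k+s}$ norm, since $H^{k+s}(I)\subseteq H^k(I)$ and the $H^k$ norm is part of the structure. For $s<s'$, we split the Gagliardo double integral of $u^{(k)}$ into the near region $|x-y|<1$ and the far region $|x-y|\ge 1$. On the near region we bound $|x-y|^{-1-2s}\le |x-y|^{-1-2s'}$. On the far region we use $|u^{(k)}(x)-u^{(k)}(y)|^2\le 2|u^{(k)}(x)|^2+2|u^{(k)}(y)|^2$ together with $\int_{|h|\ge1}|h|^{-1-2s}\,\de h<\infty$. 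This gives $[u]_{H^{k+s}}\le C(\|u^{(k)}\|_{L^2}+[u]_{H^{k+s'}})$. For $H^{k+1}\hookrightarrow H^{k+s'}$ we apply Proposition \ref{ineq:fractional} to $u^{(k)}\in H^1$. On unbounded $I$, we first fix $r=1$ in the same splitting argument. Compactness for bounded $I$ follows from the fractional Rellich theorem: $H^s(I)\hookrightarrow L^2(I)$ is compact by the Fréchet--Kolmogorov criterion, using the translation estimate $\int|v(x+h)-v(x)|^2\,\de x\le C|h|^{2s}[v]^2_{H^s}$ after extending $v$ to $\R$. Apply this to $u^{(k)}$. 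For the lower derivatives $u^{(\ell)}$, $\ell<k$, use the compactness of $H^1\hookrightarrow L^2$ on bounded intervals, via the classical Rellich theorem, iteratively.

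\emph{Morrey embeddings.} For (a) with $k\ge1$, $u^{(\ell-1)}$ is absolutely continuous for $\ell\le k$. By Cauchy--Schwarz,
\[
|u^{(\ell-1)}(x)-u^{(\ell-1)}(y)|=\Big|\int_y^x u^{(\ell)}\Big|\le |x-y|^{1/2}\|u^{(\ell)}\|_{L^2(I)},
\]
so in fact $C_{k,s}=1$. Boundedness of $u^{(\ell-1)}$ in sup norm follows on bounded $I$ from an averaging argument, $|v(x)|\le |I|^{-1/2}\|v\|_{L^2}+|I|^{1/2}\|v'\|_{L^2}$. On unbounded $I$ one applies the same estimate on unit subintervals, which makes the constant depend on $I$.

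For (b) with $s>1/2$, we use the Campanato/Garsia--Rodemich--Rumsey argument. For $x,y\in I$, let $\rho=|x-y|$ and choose dyadic subintervals $J_j$ of length $2^{-j}\rho$ shrinking to $x$. We compare averages:
\[
|v_{J_j}-v_{J_{j+1}}|^2\le \frac{C}{|J_j|^2}\int_{J_j}\int_{J_j}|v(z)-v(w)|^2\,\de z\,\de w\le C|J_j|^{2s-1}[v]^2_{H^s(I)},
\]
because $|z-w|^{1+2s}\le |J_j|^{1+2s}$. Summing the geometric series, which converges exactly when $s>1/2$, gives $|v(x)-v_{J_0}|\le C\rho^{s-1/2}[v]_{H^s}$ at Lebesgue points. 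We do the same for $y$ and compare the two initial intervals, which can be taken to be one common interval of length $\rho$ containing both points. This yields the Hölder estimate for the continuous representative.

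\emph{Main obstacle.} The only delicate step is this dyadic telescoping near the boundary of $I$, together with the extension needed for compactness when $I$ is not all of $\R$. In both cases I would choose the subintervals inside $I$, which is always possible in one dimension, and use reflection across the endpoints to extend $H^s$ functions with $s<1$, which is a bounded operation. Everything else can be quoted from \cite{Hitchhiker} and \cite{Leoni_frac_sob_sp}.
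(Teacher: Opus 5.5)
Your proposal is correct, but it takes a different route from the paper, which gives no argument of its own. The paper refers to Theorem 2.8 of \cite{Leoni_frac_sob_sp} for the Morrey embeddings. For compactness it says only that it follows by combining the interpolation inequalities of Propositions \ref{ineq:fractional} and \ref{ineq:interpolation} with the fractional Rellich theorem, Corollary 7.2 of \cite{Hitchhiker}.

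You reprove each ingredient directly:
\begin{itemize}
\item Cauchy--Schwarz for (a);
\item a Campanato telescoping over nested intervals inside $[x,y]$ for (b);
\item Fr\'echet--Kolmogorov after extension for $u^{(k)}$, plus the classical $H^1\hookrightarrow L^2$ Rellich theorem on the lower derivatives, where the paper's route would use Proposition \ref{ineq:interpolation} to pass from $L^2$- to $H^k$-compactness.
\end{itemize}

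The citation route is shorter. Yours has a real advantage: the constants come out independent of $I$, namely $C_{k,s}=1$ in (a) and a constant depending only on $s$ in (b). This matches the scale invariance of $[u]_{H^s}$ and of the $C^{0,s-1/2}$ seminorm. That uniformity is exactly what the paper needs later, where the H\"older estimate is applied on intervals that vary with $\e$ or with $T\to+\infty$: Lemma \ref{lemma:sign_v_regularization}, and the case $k=0$ of Lemma \ref{lemma:vanishing_tails} and of Theorem \ref{coercivity}. The statement as written only promises a constant depending on $I$.

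Two small points to tidy up:
\begin{itemize}
\item With the paper's norm $\|\cdot\|_{L^2(I)}+[\cdot]_{H^{k+s}(I)}$, the continuity of $H^{k+s}(I)\hookrightarrow H^k(I)$ is not literally ``by definition''. The intermediate derivatives must be controlled through Proposition \ref{ineq:interpolation}, applied on unit subintervals and then summed if $I$ is unbounded.
\item The sup-norm part of the $C^{0,s-1/2}$ embedding in (b) should be stated explicitly. It follows from your telescoping with an initial interval $J_0$ of fixed length $\min\{1,|I|\}$.
\end{itemize}
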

The results are all well known. For instance, Morrey's embeddings appear as Theorem 2.8 in \cite{Leoni_frac_sob_sp}. Compactness instead can be derived by combining the previous propositions and Corollary 7.2 in \cite{Hitchhiker}.

Let us come back to the functionals $F^{a_{\scriptstyle\e}}_{\e}$. We conclude this section with two results inherited by comparison with \cite{Solci_2024}.

\begin{Lemma}[Derivative bounds]
\label{lemma:derivative_bounds}
   Let $k\geq 1$ and choose any uniform family of homogenization kernels $(a_\e)_\e$, and consider a sequence $(u_\e)_{\e}$ in $ H^{k+s}((0,1))$. 
   For each  $\eta\in(0,1)$ such that $\eta < \inf_{|z|>2}\sqrt{W(z)}$, there exists a constant $\rho > 0$ such that for any interval $I\subset (0,1)$, $|I| \geq \e \rho F^{a_{\scriptstyle\e}}_{\e}(u_\e)$,
   on which $||u_\e | -1|< \eta$, we have:
 \[
 \Big|\Big\{t \in I : |u^{(\l)}_\e(t)| < \frac{1}{\e^\l} 
\text{ for all } \l \in \{1,...,k\}\Big\}\Big|>0.
\]
\end{Lemma}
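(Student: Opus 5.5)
The plan is to reduce to the homogeneous functional $G_\e$ of \eqref{funzionale Solci}, exactly as in the proof of Proposition \ref{thm:compactness}, and then run a Chebyshev-type counting argument on the interval $I$. Since $\alpha_a\le a_\e$, we have $\min\{\alpha_a,1\}\,G_\e(u_\e)\le F^{a_\e}_\e(u_\e)$. So either we quote the analogous statement for $G_\e$ in \cite{Solci_2024}, adjusting $\rho$ by the factor $\min\{\alpha_a,1\}^{-1}$, or we reprove it directly. I sketch the direct route, which uses only the preliminaries above.

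First, restricting the double integral to $I\times I$ gives $[u_\e]^2_{H^{k+s}(I)}\le \e^{1-2(k+s)}\alpha_a^{-1}F$, where $F:=F^{a_\e}_\e(u_\e)$. Next, on $I$ we have $||u_\e|-1|<\eta<1$, so $u_\e$ does not vanish. Since $u_\e$ is continuous for $k+s>1/2$ (Theorem \ref{thm:sobolev_embd}), it has a constant sign $\sigma$ on $I$. We then set $w=u_\e-\sigma$. This gives $|w|<\eta$ on $I$, so $\|w\|^2_{L^2(I)}\le \eta^2|I|$, and $w^{(\l)}=u_\e^{(\l)}$ for $\l\ge1$.

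We now apply Proposition \ref{ineq:interpolation} to $w$ on $I$. Writing $\theta=\l/(k+s)$ and $L=|I|$, this gives
\[
\|u_\e^{(\l)}\|_{L^2(I)}\le C\Big(L^{-\l}\eta L^{1/2}+(\eta^2L)^{\frac{1-\theta}{2}}\big(\e^{1-2(k+s)}F\big)^{\frac\theta2}\Big).
\]
By Chebyshev, $|\{t\in I:|u_\e^{(\l)}|\ge\e^{-\l}\}|\le \e^{2\l}\|u_\e^{(\l)}\|^2_{L^2(I)}$, and we want the sum of these measures over $\l=1,\dots,k$ to be less than $L$. For the first term, $\e^{2\l}L^{1-2\l}\eta^2<L/(2k)$ reduces to $(\e/L)^{2\l}\eta^2$ being small. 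For the second term, we need $\e^{2\l}L^{1-\theta}\e^{(1-2(k+s))\theta}F^\theta\lesssim L$. Since $2\l+\theta-2\l=\theta$, this is equivalent to $(\e F/L)^\theta$ being small. Both conditions hold once $L\ge\e\rho F$ with $\rho$ large, using that $F$ is bounded below or, alternatively, treating the factor $\e/L$ by also requiring $L\ge\e\rho$. Hence the good set has positive measure.

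The main obstacle is precisely this first term: it needs $L\gg\e$ independently of $F$. If $F$ is small, the condition $L\ge\e\rho F$ does not guarantee it. I would handle this by observing that a nonconstant-sign configuration is excluded, together with a lower bound on $F$. If that is not available, then when $F$ is tiny the Gagliardo seminorm is tiny, and an improved Poincaré argument (Proposition \ref{ineq:mean_and_fractional} applied to $w^{(\l)}$) shows that $u^{(k)}$ is almost constant. Combined with $|w|<\eta$ on $I$, this forces the lower derivatives to be small. Sorting out this regime, and checking that the constants depend only on $\eta$ and the fixed parameters, is where I expect the care to lie.
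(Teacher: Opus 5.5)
Your first option is exactly the paper's proof, and it suffices: use $\min\{\alpha_a,1\}\,G_\e(u_\e)\le F^{a_\e}_\e(u_\e)$, invoke Lemma~5 of \cite{Solci_2024}, and enlarge $\rho$ by the factor $\min\{\alpha_a,1\}^{-1}$.

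The direct route you actually develop has a genuine gap, and more care on $I$ will not close it. Your argument uses only data localized on $I$: the seminorm on $I\times I$, the bound $|u_\e-\sigma|<\eta$ on $I$, and $|I|\ge\e\rho F$, where $F$ enters only through upper bounds. So it would equally prove the statement with $F^{a_\e}_\e(u_\e)$ replaced by the localized energy $F^{a_\e}_\e(u_\e,I)$, and that version is false. Take any $k\ge1$ and let $u_\e(x)=1+\frac{2}{\e}(x-x_0)$ on $I=(x_0-\gamma\e/2,\,x_0+\gamma\e/2)$ with $\gamma<\eta$. Then:
\begin{itemize}
\item[(a)] $|u_\e-1|<\eta$ on $I$;
\item[(b)] $u_\e^{(k)}$ is constant on $I$, so $F^{a_\e}_\e(u_\e,I)=\frac1\e\int_I W(u_\e)\le\beta_W\gamma^3/3$;
\item[(c)] hence $|I|=\gamma\e\ge\e\rho F^{a_\e}_\e(u_\e,I)$ as soon as $\gamma^2\le 3/(\rho\beta_W)$;
\item[(d)] yet $|u_\e'|=2/\e$ on all of $I$.
\end{itemize}
This is precisely your problematic first term in the regime $|I|\ll\e$, which the hypothesis allows when $F$ is small.

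None of your proposed remedies works. The improved Poincar\'e inequality on $I$ gives nothing here, since $u_\e'$ is constant. The bound $|u_\e-\sigma|<\eta$ does not force small derivatives on intervals shorter than $\e$. And no positive lower bound on $F$ is available, since small perturbations of $\pm1$ have arbitrarily small energy. For the statement as written, short intervals must be ruled out using energy \emph{outside} $I$, for instance the cost of bringing $u_\e^{(\l)}$ back down off $I$.

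Your interpolation--Chebyshev computation is complete if you add the harmless requirement $|I|\ge\rho\e$, say $|I|\ge\e\rho\max\{F^{a_\e}_\e(u_\e),1\}$. Then the first term contributes at most $C\eta^2\rho^{-2\l}|I|$ and the second at most $C\rho^{-\theta}|I|$ to $|\{t\in I:|u_\e^{(\l)}(t)|\ge\e^{-\l}\}|$. Summing over $\l=1,\dots,k$ gives measure $<|I|$ for $\rho$ large. This version is all the paper needs, because the intervals in its applications can always be taken longer than a fixed multiple of the unit length after rescaling:
\begin{itemize}
\item in Corollary~\ref{coroll:min_equivalence}, $c''$ can be taken arbitrarily large;
\item in the liminf proof, $c''=\theta\tau/\e\to\infty$;
\item in Theorem~\ref{coercivity}, $S$ can be replaced by $\max\{S,1\}$ in $R=\rho S$.
\end{itemize}
Lemma~\ref{lemma:vanishing_tails}'s hypothesis $c''>1$, $c''\ge\rho\Phi^a(v)$ would become $c''\ge\rho\max\{\Phi^a(v),1\}$, which these applications also meet.
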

We remark that $\rho$ depends on $k$, $\alpha_W$, and the constant $C$ in the interpolation inequality of Proposition \ref{ineq:interpolation}.
\begin{proof}
    Consider the functional $G_\e$ defined in \eqref{funzionale Solci}. Then, as in Proposition \ref{thm:compactness}, $\min\{1,\alpha_a\}G_\e(u)\leq F^{a_{\scriptstyle \e}}_{\e}(u)$. Therefore, $\sup_\e G_\e(u_\e)<+\infty$, and the claim follows by the proof of Lemma $5$ in \cite{Solci_2024}.
\end{proof}
We note that in the original statement of Lemma 5 in \cite{Solci_2024}, the sequence $(u_\e)_{\e}$ belongs to $C^\infty([0,1])$, as this is a general simplifying assumption for the whole section in which the Lemma is presented. However, in the proof of this particular result, such high regularity is never used.

The next Lemma will provide a bound on the number of \emph{transition intervals}, where $u_{\e}$ jumps between two different values.
\begin{Definition}[\emph{Transition intervals}]\label{def:transition_intervals}
    Let $I$ be an open interval. Given a function $u\in C^0(\overline{I})$ and two values $\lambda_1,\lambda_2\in\R$, $\lambda_1<\lambda_2$, we say that $J=(a,b)\subset I$ is a \emph{transition interval} between $\lambda_1$ and $\lambda_2$ if $\{u(a),u(b)\} = \{\lambda_1,\lambda_2\}$.
\end{Definition}

\begin{Lemma}
    \label{lemma:transitions_number}
For any choice of a uniform family of homogenization kernels $(a_\e)_\e$, let
 $(u_\e)_{\e }$ be a sequence in   $C^\infty([0,1])$ such that $\sup_\e F^{a_{\scriptstyle\e}}_{\e}(u_\e)< +\infty$. 
 Then, for any $\lambda_1, \lambda_2\in\R$ such that $\lambda_1<\lambda_2$ and $[\lambda_1,\lambda_2]\cap \{\pm 1\} =\varnothing$ the number of transition intervals between $\lambda_1$ and $\lambda_2$ for $u_\e$ is equibounded in $\e$.
\end{Lemma}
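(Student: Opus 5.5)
The plan is to reduce to the homogeneous functional $G_\e$ in \eqref{funzionale Solci} by comparison, exactly as in the proofs of Proposition \ref{thm:compactness} and Lemma \ref{lemma:derivative_bounds}. Since every kernel of a uniform family satisfies $a_\e\geq\alpha_a$ almost everywhere, we have
\[
\min\{1,\alpha_a\}\,G_\e(u)\leq F^{a_{\scriptstyle\e}}_{\e}(u)\qquad\text{for all } u\in H^{k+s}((0,1)).
\]
Hence $\sup_\e G_\e(u_\e)\leq \min\{1,\alpha_a\}^{-1}\sup_\e F^{a_{\scriptstyle\e}}_{\e}(u_\e)<+\infty$. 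The statement does not involve the kernel at all: the notion of transition interval depends only on $u_\e$ and on $\lambda_1,\lambda_2$. So it suffices to invoke the corresponding result in \cite{Solci_2024}, which bounds the number of transition intervals for smooth sequences with bounded $G_\e$-energy. The hypothesis $u_\e\in C^\infty([0,1])$ is kept precisely so that the setting of that section of \cite{Solci_2024} applies verbatim.

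For a self-contained argument I would proceed as follows. Since $[\lambda_1,\lambda_2]$ avoids $\pm1$ and $W$ is continuous and vanishes only at $\pm1$, we get $c:=\min_{[\lambda_1,\lambda_2]}W>0$. I would first select a family of pairwise disjoint transition intervals, which I may shrink so that $u_\e$ takes values in $[\lambda_1,\lambda_2]$ on each of them. On any such interval $J$, the Wasserstein-free estimate $\frac1\e\int_J W(u_\e)\geq c|J|/\e$ gives an energy contribution of order one whenever $|J|\gtrsim\e$.

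When $|J|\ll\e$ the potential term is small, and I would instead use the nonlocal term after rescaling by $\e$. Set $v(x)=u_\e(\e x)$ on $J/\e$, so that $v$ oscillates by $\lambda_2-\lambda_1$ over a short interval. I would then enlarge $J/\e$ to an interval of length one. There, either $W(v)$ contributes order one, or $v$ stays close to $[\lambda_1,\lambda_2]$ with bounded $H^{k+s}$ seminorm.

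For $k=0$ and $s>\tfrac12$, Morrey's embedding (Theorem \ref{thm:sobolev_embd}) forces a lower bound on $[v]_{H^s}$ on a unit interval. For $k\geq1$, I would use the interpolation inequality of Proposition \ref{ineq:interpolation} together with Lemma \ref{lemma:derivative_bounds} to control $v'$. For $k=0$, $s\leq\tfrac12$, there is no continuity control, and the lower bound must come from the Gagliardo energy of an oscillation on a unit-scale interval, estimated directly. Summing over disjoint rescaled windows gives $N\cdot c'\leq\sup_\e G_\e(u_\e)$.

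The main obstacle is the case $k=0$, $s\leq\tfrac12$, where short oscillations are not penalized by Hölder control. There the contributions of different windows must also be kept disjoint in the double integral. I would handle this by restricting to diagonal blocks $I_j\times I_j$, which are disjoint, and by using that each block carries energy bounded below by a constant via Proposition \ref{ineq:mean_and_fractional}. Since this case is exactly covered in \cite{Solci_2024}, in the paper I would simply cite it after the comparison step.
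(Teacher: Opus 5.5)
Your proposal is correct and is essentially the paper's proof: the paper also uses $\min\{1,\alpha_a\}G_\e\le F^{a_{\scriptstyle\e}}_{\e}$ to get $\sup_\e G_\e(u_\e)<+\infty$, then invokes Proposition 10 of \cite{Solci_2024}, since the notion of transition interval does not involve the kernel. Your self-contained sketch is not needed (and is rather loose), and the case $k=0$, $s\le\frac12$ that you single out as the main obstacle never arises here, because $k+s>\frac12$ is a standing assumption and so for $k=0$ Morrey's H\"older control is always available.
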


\begin{proof}
    As in the proof of Lemma \ref{lemma:derivative_bounds}, we have that the sequence $G_\e(u_\e)$ is bounded and, thus, the claim follows by
    Proposition 10 in \cite{Solci_2024}.
\end{proof}

We conclude this section with a useful remark.
\begin{Remark}[On values far from minima]
\label{rmk:equibounded_intervals}
    We observe that the properties of the double-well potential imply that for any $\eta>0$ there exists $c_\eta>0$ such that $W(z) \geq c_\eta$ on $\{||z|-1|>\frac{\eta}{2}\}$. Then, given any  sequence $(u_\e)_\e$, 
    $\sup_\e F^{a_{\scriptstyle\e}}_{\e}(u_\e) <+\infty$, there exists a constant $C_\eta>0$ such that:
    \begin{equation*}
        \Big|\Big\{\big||u_\e|-1\big|>\frac{\eta}{2}\Big\}\Big|\leq C_\eta\e,
    \end{equation*}
    for every $\e>0$. Moreover, being the preimage of an open set under a continuous function,
    \begin{equation*}
        \Big\{\big||u_\e|-1\big|>\frac{\eta}{2}\Big\}=\bigsqcup_{n}J_n
    \end{equation*}
    is the disjoint union, at most countable, of open intervals $J_n$.
    Let $\omega_{1},\omega_{2}\in\{\pm 1\}$ be two signs. Applying Lemma \ref{lemma:transitions_number} to \[ \{\lambda_1,\lambda_2\}=\Big\{\omega_1+\omega_2\eta,\omega_1+\omega_2\frac{\eta}{2}\Big\},\] 
    with the choice $\lambda_1<\lambda_2$, we get that only a finite number of intervals, say $J_1, \ldots, J_M$, can contain a transition between $\lambda_1$ and $\lambda_2$ for any choice of signs. Moreover $M= M_\eta$ is independent from $\e$. 

    The previous considerations, upon rescaling, can be expressed in terms of the rescaled functionals $\Phi$. Let $(v_\e)_\e$ be a sequence of functions each in $H^{k+s}((-\tau/\e,\,\tau/\e ))$, where the parameter $  \tau>0$ is introduced  for later convenience, such that  
     \[\sup_\e\Phi^{\tilde a_{\scriptstyle\e}}_{\tau/\e}(v_\e)\equiv\sup_\e \Phi^{\tilde a_{\scriptstyle \e}} \left(v_\e, \left(-\frac{\tau}{\e},\frac{\tau}{\e}\right)\right)<\infty,\]
    where $\tilde a_\e(x,y)\coloneqq a_\e(\e x, \e y)$.  Then 
    \begin{equation}\label{eq:A_eta}
        \Big|\Big\{\big||v_\e|-1\big|>\frac{\eta}{2}\Big\}\Big|\leq C_\eta.
    \end{equation}
    and, again, only a finite number of disjoint intervals, say $ I_1, \ldots,  I_M\subset (-\tau/\e,\,\tau/\e)$, can contain a transition between $\lambda_1$ and $\lambda_2$ for any choice of signs. If two intervals $ I_n=(a_n,b_n)$, $ I_{n'}=(b_n,b_{n'})$ share an edge, we actually substitute them with the interior of the closure of their union, that is we consider $I_{n}= (a_n, b_{n'})$ with a small abuse of notation.
    
    This assures that, setting
    \begin{equation}
    \label{def:A_eta}
        A^\eta_{\e}\coloneqq \bigsqcup_{n=1}^M I_n,
    \end{equation} 
     both $A^\eta_\e$ and its complement $(A^\eta_\e)^{\mathsf c}$ are finite unions of (at most $M+1$) intervals, $|A^\eta_\e|\leq C_\eta$ and
        $v_{\e}$ is close to a minimum of the double-well potential on $(A^\eta_\e)^{\mathsf c}$, namely
       \[\big||v_\e|-1\big|< \eta, \quad \text{ on }(A^\eta_\e)^{\mathsf c}.\]
      Indeed, otherwise $\big||v_{\e}| -1\big|$ would assume both values $\eta$ and $\eta/2$ (note that $\big||v_{\e}| -1\big|=\eta/2$ on $\partial A^\eta_\e$), which, by construction, is not possible outside of $A^\eta_\e$.

\end{Remark}

\section{Modification lemmas} 
In this section, we will first prove two useful approximation results and an immediate corollary. Indeed, to prove the liminf inequalities in the $\Gamma$-limit in section \ref{Gamma-liminf}, it will be useful to modify a minimizing sequence to obtain better properties while increasing the energy only by an arbitrarily small amount. 
This can be challenging for non-local functionals, as any modification, no matter how localized, affects the functional over the entire domain. 

In the \emph{Flattening lemma} \ref{lemma:flattening}, we modify a function $u$ when its values are close to a minimum of the double-well potential, substituting the value with the actual minima, and estimate the energy cost of the procedure. 
One of the advantages of this flattening process is that, once a function has been made constant near the edge of its domain, we can extend it to infinity without increasing the norms of its derivatives, nor the energy of the double-well potential.

In Lemma \ref{lemma:vanishing_tails}, we prove that, indeed, for any function $v$ flattened and extended as above, the difference $\Phi^a(v, I)-\Phi^a(v)$ can be controlled up to an error that can be made arbitrarily small.

\begin{Lemma}[Flattening] \label{lemma:flattening} Let $a$ be any homogenization kernel, let \[b<c''<c'<c,\quad  c'-c''\ge 1,\] and fix $\eta\in (0,1)$. For every $u\in H^{k+s}((b,c))$ such that
\[|u(x)-1|\leq \eta,\text{ for }x\in (c'',c),\]
and for every natural $N$ there exists a $v\in H^{k+s}((b,c))$ such that
\[v(x)=u(x), \text{ for }x\in (b,c''),\qquad v(x)=1, \text{ for }x\in (c',c), \]
and, for some constants $C,C_N>0$, $C$ independent of $N$,
 
\begin{equation*}
\begin{split}
        \Phi^a\big(v, (b,c)\big) &\leq \bigg(1 +\frac{C}{N}+\frac{C_N}{(c'-c'')^{2s}}\bigg)\Phi^a\big(u, (b,c)\big).
\end{split}
\end{equation*}

\end{Lemma}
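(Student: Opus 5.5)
We plan to construct $v$ by interpolating $u$ with the constant $1$ through a cutoff chosen from a family of $N$ candidates, and to use an averaging argument to select the best one. Set $L=c'-c''\ge 1$ and split $(c'',c')$ into $N$ consecutive subintervals $J_1,\dots,J_N$ of length $L/N$. For each $j$ let $\phi_j\in C^\infty(\R)$ equal $1$ to the left of $J_j$ and $0$ to the right of $J_j$, with $|\phi_j^{(\ell)}|\le C_N L^{-\ell}$ for $\ell\le k+1$; this is possible since $|J_j|=L/N$. Define
\[v_j=\phi_j u+(1-\phi_j)\cdot 1 = 1+\phi_j(u-1).\]
Then $v_j=u$ on $(b,c'')$ and $v_j=1$ on $(c',c)$. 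On $(c'',c)$ we have $|u-1|\le\eta$, so $v_j$ lies between $u$ and $1$ there. The potential term therefore does not increase once we observe $W(v_j)\le \frac{\beta_W}{\alpha_W}W(u)$ pointwise: indeed $|1-|v_j||=\phi_j|1-u|$ when values stay near $1$, since $\eta<1$. This already costs a multiplicative constant. To avoid that constant we instead note that only $J_j$ contributes a change, and the averaging below makes this cost $\le \frac{C}{N}\int_{(c'',c')}W(u)$.

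For the nonlocal term, write $w=u-1$, so that $v_j^{(k)}=(\phi_j w)^{(k)}=\phi_j w^{(k)}+\sum_{\ell<k}\binom{k}{\ell}\phi_j^{(k-\ell)}w^{(\ell)}$. We split the double integral over the regions $A\times B$, with $A,B\in\{(b,c''),(c'',c')\text{-ish},(c',c)\}$ refined by $J_j$.
\begin{itemize}
\item The pair $(b,\min J_j)^2$ is unchanged.
\item The pair $(\max J_j,c)^2$ gives $0$.
\item The cross terms involving $J_j$ or straddling it are estimated by the Leibniz expansion.
\end{itemize}
The term $\phi_j w^{(k)}$ is handled by $|\phi_j(x)w^{(k)}(x)-\phi_j(y)w^{(k)}(y)|\le |w^{(k)}(x)-w^{(k)}(y)|+|\phi_j(x)-\phi_j(y)||w^{(k)}(y)|$. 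The first piece reproduces the energy of $u$, but only on pairs touching the transition layer. The second piece gives $\int |w^{(k)}|^2\int\frac{|\phi_j(x)-\phi_j(y)|^2}{|x-y|^{1+2s}}dx\lesssim C_N L^{-2s}\|w^{(k)}\|^2_{L^2}$ by Proposition \ref{ineq:fractional} applied to $\phi_j$ with $r\sim L$. The lower-order terms $\phi_j^{(k-\ell)}w^{(\ell)}$ have coefficients of size $C_NL^{-(k-\ell)}$. Their Gagliardo seminorms are bounded via Proposition \ref{ineq:fractional} by $C_N L^{-(k-\ell)}(L^{-s}\|w^{(\ell)}\|_{L^2}+L^{1-s}\|w^{(\ell+1)}\|_{L^2})$ on $(c'',c)$. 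Then Proposition \ref{ineq:interpolation} reduces $\|w^{(\ell)}\|_{L^2}$ to $\|w\|_{L^2}$ and $[u]_{H^{k+s}}$ on intervals of length $\gtrsim L\ge1$. Finally $\|w\|_{L^2((c'',c))}^2\le \alpha_W^{-1}\int W(u)$, because $|1-|u||=|u-1|$ there. With Young's inequality and $L\ge1$, all these lower-order contributions are bounded by $C_N L^{-2s}\Phi^a(u,(b,c))$.

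The cross terms between $\{x\le \min J_j\}$ and $\{y\ge\max J_j\}$ where $v_j=1$ satisfy $|v_j^{(k)}(x)-0|=|u^{(k)}(x)|$. This is compared with $|u^{(k)}(x)-u^{(k)}(y)|^2$ plus $|u^{(k)}(y)|^2\int_{|x-y|>\cdot}|x-y|^{-1-2s}$. Since $u^{(k)}=w^{(k)}$, these are again controlled by $C\,\mathrm{dist}^{-2s}$ times $L^2$ norms near $J_j$, which is handled as above.

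The main obstacle is obtaining the sharp leading factor $1+C/N$ rather than some constant $C$. We do this by averaging over $j$. The terms that are bounded only by a fixed constant multiple are those localized near $J_j$:
\[\int_{J_j}W(u)+\iint_{(x,y):\,x\text{ or }y\in J_j}a\,\frac{|u^{(k)}(x)-u^{(k)}(y)|^2}{|x-y|^{1+2s}},\]
together with the local $L^2$ norms on $J_j$. The sets $J_j\times\R$ are disjoint, so summing over $j$ gives at most $2\Phi^a(u,(b,c))$ plus the global $L^2$ terms. Hence some $j$ has localized cost $\le \frac{C}{N}\Phi^a(u,(b,c))$. Choosing that $j$ and setting $v=v_j$ yields
\[\Phi^a(v,(b,c))\le\Big(1+\frac CN+\frac{C_N}{L^{2s}}\Big)\Phi^a(u,(b,c)),\]
with $C$ independent of $N$ (it depends only on $\beta_a/\alpha_a$ and $\beta_W/\alpha_W$). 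The delicate bookkeeping is making sure every non-localized error carries the factor $L^{-2s}$ and every $N$-independent constant multiplies a localized quantity.
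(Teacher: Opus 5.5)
Your plan is essentially the paper's proof. It uses the same $N$ cutoffs on consecutive subintervals of $(c'',c')$, the same $v_j=1+\phi_j(u-1)$, the Leibniz expansion of $(\phi_j w)^{(k)}$ with Young's inequality, Propositions \ref{ineq:interpolation} and \ref{ineq:fractional} with all $L^2$ norms taken where $|u-1|\le\eta$, and a pigeonhole choice of $j$. The only structural difference is that you also average the nonlocal energy near $J_j$, while the paper averages only the potential term. This is not needed: the top-order piece of $v_j^{(k)}(x)-v_j^{(k)}(y)$ is $\phi_j(x)\big(u^{(k)}(x)-u^{(k)}(y)\big)$ with $|\phi_j|\le 1$. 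Young's inequality with weights $1+\frac1N$ and $C_N$ therefore already gives the leading factor $1+\frac1N$ on all pairs at once, your straddling pairs included. The obstacle you single out exists only for $W$. Your extra averaging is harmless, and applying Proposition \ref{ineq:fractional} directly to the products $\phi_j^{(k-\ell)}w^{(\ell)}\in H^1$ is a tidy substitute for the paper's $G_\ell/H_\ell$ split.

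Two steps need repair.

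First, to bound $\int |w^{(k)}(y)|^2\big(\int \frac{|\phi_j(x)-\phi_j(y)|^2}{|x-y|^{1+2s}}\,\de x\big)\de y$ you need the inner integral bounded uniformly in $y$. Proposition \ref{ineq:fractional} only controls the double integral $[\phi_j]^2_{H^s}$, which does not help unless $w^{(k)}$ is bounded. The uniform bound $C(N/L)^{2s}$ comes instead from splitting at $|x-y|=L/N$, using $|\phi_j'|\le CN/L$ near the diagonal and $|\phi_j|\le 1$ away from it, as in the paper's estimate of $H_\ell$.

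Second, the factor $|w^{(k)}|$ must sit at the point lying in $(\min J_j,c)$. For $x\in J_j$ and $y<\min J_j$ you should write $\phi_j(y)\big(w^{(k)}(x)-w^{(k)}(y)\big)+w^{(k)}(x)\big(\phi_j(x)-\phi_j(y)\big)$, not the version you display. The paper arranges this by working on $(b,c)\times(t_{j-1},c)$ and using symmetry for the rest. Otherwise you need $\|u^{(k)}\|_{L^2}$ on part of $(b,c'')$. There $u$ need not be close to $1$, so the bound $\|u-1\|^2_{L^2}\le\alpha_W^{-1}\int W(u)$ that feeds the interpolation inequality is unavailable.
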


\begin{figure}
    \centering
    \includegraphics[width=0.85\linewidth]{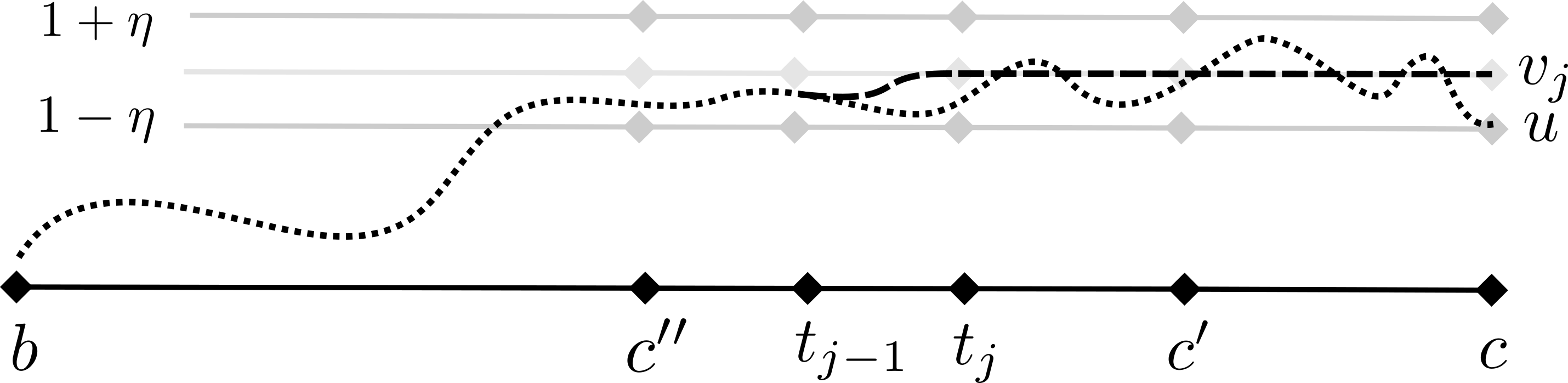}
    \caption{The dotted line represents the original function $u$, while the dashed line represents its flattening $v_j$. They differ just on the right of $t_{j-1}$.}
\end{figure}

The proof loosely follows Step 1 of the proof of the lower bound in \cite{Solci_2024}, but it is expressed in a different setting.
\begin{proof} 

For $j=1,\ldots, N$ we define $t_j\coloneqq c''+\frac{j}{N}(c'-c'')$ and consider interpolating functions $\phi_j\in C^\infty(\R)$ satisfying
    \begin{equation*}
          \phi_j(t)=
    \begin{cases}
    1 ,\quad \text{if }   t\leq t_{j-1},
    \\
    0 ,\quad \text{if } t\ge t_j.
    \end{cases}
    \end{equation*}
    We can assume $\|\phi_j^{(\l)}\|_\infty\leq C \big(\frac{N}{c'-c''}\big)^\l$, for some $C>0$ and every $\ell\leq k+1$. We define 
    \begin{equation*}
        v_j=\phi_ju+(1-\phi_j)
    \end{equation*}
    and we shall prove that one of the $v_j$ has the desired properties. 
    
    First, we focus on the local term of the functional. We can immediately notice that 
    \[\int_b^c W\big(v_j(x)\big)\,\de x = \int_b^{t_{j-1}}W\big(u(x)\big)\,\de x + \int_{t_{j-1}}^{t_j}W\big(v_j(x)\big)\,\de x.\]
    Furthermore, since $|v_j-1| \leq |u-1|<1$ in $(c'',c)\supset (t_{j-1},t_j)$, we have: 
    \begin{align*}
    \int_{t_{j-1}}^{t_j}W\big(v_j(x)\big)\,\de x &\leq \int_{t_{j-1}}^{t_j}\beta_W\big(v_j(x)-1\big)^2\,\de x \\
    &= \int_{t_{j-1}}^{t_j}\beta_W\phi_j^2(x)\big(u(x)-1\big)^2\,\de x \\ 
    &\leq \beta_W  \int_{t_{j-1}}^{t_j}\big(u(x)-1\big)^2\,\de x \\
    &\leq \frac{\beta_W}{\alpha_W}\int_{t_{j-1}}^{t_j}W\big(u(x)\big)\,\de x,
\end{align*}
hence there exists $j_*$, which will be fixed for the rest of the proof, such that
\begin{equation*}
    \int_{t_{j_*-1}}^{t_{j_*}}W\big(v_j(x)\big)\,\de x \leq \frac{1}{N} \frac{\beta_W}{\alpha_W}\int_{c''}^{c'}W\big(u(x)\big)\,\de x,
\end{equation*}
which implies
\begin{equation}
\label{flattening est loc term}
    \begin{split}
        \int_{b}^c W\big(v_{j_*}(x)\big)\,\de x \leq \int_{b}^c  W\big(u(x)\big)\,\de x + \frac{1}{N}\frac{\beta_W}{\alpha_W} \int_{c''}^{c'} W\big(u(x)\big)\,\de x.
    \end{split}
\end{equation}

We now turn to the nonlocal term of the functional.  We divide the domain of integration $(b,c)^2$ into $(b,t_{j-1})^2,\, (b,c)\times (t_{j-1},c)$ and $(t_{j-1},c) \times (b,t_{j-1})$. 
In the first case, when $x,y \in (b,t_{j-1})$, there is nothing to prove as $v_j=u$.
We can focus just on $D\coloneqq (b,c)\times (t_{j-1},c)$, because in the other case of $(x,y)\in(t_{j-1},c)\times(b,t_{j-1})$, upon switching the roles of $x$ and $y$, the reasoning is analogous.

We can compute $v_j^{(k)}(x)-v_j^{(k)}(y)$, as follows
\begin{equation*}
    \begin{split}
         v_j^{(k)}(x) - v_j^{(k)}(y)  =\sum_{\l=0}^{k}\binom{k}{\l}
     &\Big( \phi_j^{(k-\l)}(x)\big(u^{(\l)}(x)- u^{(\l)}(y)\big) \\
     &\qquad + (u-1)^{(\l)}(y)\big(\phi_j^{(k-\l)}(x)-\phi_j^{(k-\l)}(y)\big)\Big) .
    \end{split}
\end{equation*}

Using Young's inequality and that $|\phi_j|\leq 1$
\begin{equation}\label{eq:vj_estimate_decomposition}
    \begin{split}
         &\iint_{D}a(x,y)\frac{\big|v_j^{(k)}(x)-v_j^{(k)}(y)\big|^2}{|x-y|^{1+2s}}\,\de x\,\de y \\
         & \leq\Big(1+\frac{1}{N}\Big)\iint_{D} a(x,y)\frac{\big|u^{(k)}(x)-u^{(k)}(y)\big|^2}{|x-y|^{1+2s}}\,\de x\,\de y \\
         &\qquad+ C_N\beta_a \sum_{\ell=0}^{k-1}\binom{k}{\l} \underbrace{\iint_{D}\frac{\big|\phi_j^{(k-\l)}(x)\big|^2\big|u^{(\l)}(x)-  u^{(\l)}(y) \big|^2}{|x-y|^{1+2s}} \,\de x\,\de y}_{\eqqcolon G_\l}  \\ 
         &\qquad+ C_N\beta_a \sum_{\ell=0}^{k}\binom{k}{\l} \underbrace{\iint_{D}\frac{  \big|(u-1)^{(\l)}(y)\big|^2\big|\phi_j^{(k-\l)}(x)-\phi_j^{(k-\l)}(y) \big|^2}{|x-y|^{1+2s}} \,\de x\,\de y}_{\eqqcolon H_\l}.
    \end{split}
\end{equation}
The terms $H_\ell$ and $G_\ell$ can be estimated as follows:
\begin{equation}\label{ineq:Hl_Gl}
  \begin{split}
    G_\l &\leq C\bigg(\frac{N}{c'-c''}\bigg)^{2(k -\l)} \big[(u-1)^{(\l)}\big]^2_{H^s((t_{j-1},c))}, \\
      H_\l &\leq C \bigg(\frac{N}{c'-c''}\bigg)^{2(k+s-\l)}\big\| (u-1)^{(\l)}\big \|_{L^2((t_{j-1}, c))},\\
  \end{split}  
\end{equation} 
for some constant $C>0$.
Indeed, the estimate for $G_\l$ follows by the fact that $\phi_j^{(k-\ell)}(x) = 0$ if $x\leq t_{j-1}$ and $\|\phi_j^{(k-\l)}\|_\infty \leq C\big(\frac{N}{c'-c''}\big)^{k-\l}$. 

Concerning $H_\l$, it is convenient to analyze separately the integral close to and far from the diagonal. In fact, near the diagonal we use the Lipschitz continuity of $\phi^{(k-\l)}$, whereas far from the diagonal we use that $\phi^{(k-\l)}$ is bounded. We let
\begin{equation*}
    \Delta=D\cap \Big\{(x,y)\,\Big|\, |x-y|\leq \frac{c'-c''}{N} \,\Big\},
\end{equation*} 
and we estimate
\begin{equation*}
    \begin{split}
       H_\l & = \iint_{\Delta}\frac{  \big|(u-1)^{(\l)}(y)\big|^2\big|\phi_j^{(k-\l)}(x)-\phi_j^{(k-\l)}(y) \big|^2}{|x-y|^{1+2s}} \,\de x\,\de y\\
         &\qquad+ \iint_{D\setminus\Delta}\frac{  \big|(u-1)^{(\l)}(y)\big|^2\big|\phi_j^{(k-\l)}(x)-\phi_j^{(k-\l)}(y) \big|^2}{|x-y|^{1+2s}} \,\de x\,\de y\\
         & \leq C \bigg(\frac{N}{c'-c''}\bigg)^{2(k+1-\l)}  \iint_{\Delta}\frac{  \big| (u-1)^{(\l)}(y)\big|^2 }{|x-y|^{2s-1}} \,\de x\,\de y\\
         & \qquad +C \bigg(\frac{N}{c'-c''}\bigg)^{2(k-\l)} \iint_{D\setminus\Delta}   \frac{\big| (u-1)^{(\l)}(y)\big |^2}{\abs{x-y}^{1+2s}} \,\de x\,\de y\\
                  & \leq C  \bigg(\frac{N}{c'-c''}\bigg)^{2(k+1-\l)}  \big\|(u-1)^{(\l)}\big \|^2_{L^2((t_{j-1},c))} \int_{0}^{(c'-c'')/N}\xi^{1-2s} \,\de \xi\\
         & \qquad +C \bigg(\frac{N}{c'-c''}\bigg)^{2(k-\l)}    \big\|(u-1)^{(\l)}\big \|^2_{L^2((t_{j-1},c))}  \int_{(c'-c'')/N}^{+\infty}\xi^{-1-2s} \,\de \xi\\
                 & \leq C \bigg(\frac{N}{c'-c''}\bigg)^{2(k+s-\l)}  \big\|(u-1)^{(\l)}\big \|^2_{L^2((t_{j-1},c))}.
    \end{split}
\end{equation*}

 We conclude by bounding the various norms and seminorms in terms of $\Phi$. For $\l=0$ by the double well definition and $\abs{u(x)-1} < \eta$ for $x\in (t_{j-1}, c)$:
 \begin{equation}
 \label{ineq:estimate L^2 norm}
     \|u-1\|^2_{L^2((t_{j-1},c))}\leq \alpha_W^{-1}\int_{t_{j-1}}^c W (u)\,\de x \leq \alpha_W^{-1}\Phi^a(u, (b,c))
 \end{equation}
  For $\l\ge 1$ we apply the interpolation inequality (\ref{ineq:interp_solci}) to get
 \begin{equation}
 \label{ineq:estimate seminorm deriv}
     \begin{split}
         \|{u^{(\ell)}}\|^2_{L^2((t_{j-1},c))}&\leq C\big( (c-t_{j-1})^{-2\l}\| u-1\|^2_{L^2((t_{j-1},c))}\\
         &\qquad\quad+  \| u-1\|^{2-2\theta}_{L^2((t_{j-1},c))}[u]^{2\theta}_{H^{k+s}((t_{j-1},c))} \big)\\
         &\leq  C\Big(1 + \Big(\frac{1}{c'-c''}\Big)^{2\l}\Big)  \Phi^a\big(u, (b,c)\big),
     \end{split}
 \end{equation}
where we used that $c-t_{j-1} \geq c'-t_{j-1} = \frac{N-j+1}{N}(c'-c'')$. 
For $\l<k$ we apply Proposition \ref{ineq:fractional} to get
\begin{equation}
\label{ineq:estimate frac seminorm}
    \begin{split}
        [u-1] ^2_{H^{\l+s}((t_{j-1},c))}&\leq C_{N}\Big((c'-t_{j-1})^{-2s}\|(u-1)^{(\l)}\|^2_{L^2((t_{j-1},c))}\\
        &\qquad +(c'-t_{j-1})^{2(1-s)}\|(u-1)^{(\l+1)}\|^2_{L^2((t_{j-1},c))}\Big) 
    \end{split}
\end{equation}
which in turn can be estimated using \eqref{ineq:estimate L^2 norm} and \eqref{ineq:estimate seminorm deriv}.

In conclusion,  from \eqref{flattening est loc term} and the combination of \eqref{eq:vj_estimate_decomposition}, \eqref{ineq:Hl_Gl}, \eqref{ineq:estimate L^2 norm}, \eqref{ineq:estimate seminorm deriv}, and \eqref{ineq:estimate frac seminorm}, and since  $\frac{1}{c'-c''}\leq 1$, we get
 
\begin{equation*}
\begin{split}
        \Phi^a\big(v, (b,c)\big)&\leq \bigg(1+\frac{1}{N}+\frac{C_N}{(c'-c'')^{2s}}\bigg)\Phi^a\big(u, (b,c)\big)\\
        &\qquad+\frac{1}{N}\frac{\beta_W}{\alpha_W}\int_{c''}^{c'}W\big(u(x)\big)\,\de x\\
        &\leq \bigg(1 +\frac{C}{N}+\frac{C_N}{(c'-c'')^{2s}}\bigg)\Phi^a\big(u, (b,c)\big),
\end{split}
\end{equation*}
as desired.
 
\end{proof}
\begin{Remark}[Multiple flattenings]\label{rmk:multiple_flattening}
The result of Lemma \ref{lemma:flattening} also holds, with suitable modifications, for functions close to the double-well minima on multiple intervals. We spell out the details in the following two cases, for later reference. Let 
\[b<b'<b''<c''<c'< c,\quad \min\{c'-c'', b''-b'\}\ge 1,\] 
and let $u$ such that either
\[
\begin{split}
    &\text{(1)}\quad \big||u(x)|-1\big|\leq \eta, \quad \text{for }x\in (b,b'')\cup (c'',c),\\
    &\text{(2)}\quad  \big||u(x)|-1\big|\leq \eta, \quad \text{for }x\in (b',c'),
\end{split}\]
namely, in case (1), $u$ is close to some minimum on the outer subintervals, in case (2) is close to some minimum in the middle subinterval.
Then there exists $v\in H^{k+s}(b,c)$ such that, respectively, 
\[\begin{split}
    &\text{(1)}\quad\begin{cases}
         v\equiv u, &\text{on }(b'',c''), \\
        v\equiv\pm 1, &\text{on }(b,b')\cup(c',c);
    \end{cases}    \\
    &\text{(2)}\quad\begin{cases}
         v\equiv u &\text{on }(b,b')\cup (c',c), \\
          v\equiv\pm 1 &\text{on }(b'',c'').
    \end{cases}
\end{split}\]
In either case, the following inequality holds
%vecchia disuguaglianza
%
%&\leq \bigg(1+\frac{1}{N}+\frac{C_{N}}{(\min\{c'-c'', b''-b'\})^{2s}}\bigg)\Phi^a\big(u, (b,c)\big)\\
%&\qquad+\frac{1}{N}\frac{\beta_W}{\alpha_W}\int_{(b',b'')\cup (c'',c')}W\big(u(x)\big)\,\de x\\
%
\begin{equation*}
\begin{split}
        \Phi^a\big(v, (b,c)\big)
        &\leq \bigg(1+ \frac{C}{N}+\frac{C_{N}}{(\min\{c'-c'', b''-b'\})^{2s}}\bigg)\Phi^a\big(u, (b,c)\big).
\end{split}
\end{equation*}
This result follows with analogous computations after the proper choice of $\phi_j$, which this time will satisfy 
\[\|\phi_j^{(\l)}\|_\infty\leq C \Big(\frac{N}{\min\{c'-c'',b''-b'\}}\,\Big)^\l.\]
\end{Remark}

\begin{Lemma}[Vanishing tails] \label{lemma:vanishing_tails} 
Let $a$ be any homogenization kernel, $\eta$ as in Lemma \ref{lemma:derivative_bounds}, and let $v\in H^{k+s}_{\mathrm{loc}}(\R)$ be such that 
\begin{equation*}
\begin{cases}
    v(x)=\pm \operatorname{sgn}(x), &\text{ for } |x|\geq c',\\
    \big||v(x)|-1\big|\leq \eta, &\text{ for } |x|\geq c'',
\end{cases}
\end{equation*}
for some $c'>c''>1$. Then, $\Phi^a(v)<+\infty$, and, if \[c''\geq  \rho \Phi^a(v),\] where $\rho$ is constant appearing in Lemma \ref{lemma:derivative_bounds}, then, for every $T> \max\{c',3c''\}$, for $k\geq 1$:
\begin{align*}
    \Phi^a(v)&-\Phi^a_T(v) \leq C\Big(\frac{c''}{T-c'}\Big)^{2s}.
\end{align*}
    {
    While, for $k=0$ and $s>\frac{1}{2}$,
    \begin{align*}
       \Phi^a(v)&-\Phi^a_T(v) \leq C\Big(\frac{c'}{T-c'}\Big)^{2s} + \frac{C}{T^{2s-1}}. 
    \end{align*}
    }
   
\end{Lemma}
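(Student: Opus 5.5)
We first observe that $\Phi^a(v)-\Phi^a_T(v)$ splits into two pieces. The first is the local term $\int_{|x|>T}W(v)\,\de x$, which vanishes because $v=\pm\sgn(x)$ there (as $T>c'$). The second is the nonlocal interaction over $\R^2\setminus(-T,T)^2$. By symmetry it suffices to bound the region where $x\notin(-T,T)$, say $x\geq T$, with $y\in\R$.

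\emph{Case $k\geq 1$.} For $|x|\geq c'$ we have $v^{(k)}(x)=0$. Hence the integrand vanishes unless $|y|<c'$, and it reduces to $|v^{(k)}(y)|^2/|x-y|^{1+2s}$. Integrating in $x\geq T$ gives $\frac{1}{2s}(T-y)^{-2s}\leq C(T-c')^{-2s}$. Therefore
\[\Phi^a(v)-\Phi^a_T(v)\leq C\beta_a (T-c')^{-2s}\|v^{(k)}\|^2_{L^2((-c',c'))}.\]
So the key step is to bound $\|v^{(k)}\|^2_{L^2((-c',c'))}\leq C (c'')^{2s}$, or at least by $C(c'')^{2s}$ up to the form in the statement.

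To get this, we apply the interpolation inequality, Proposition \ref{ineq:interpolation}, to $v\mp1$ on intervals where $v$ is close to $\pm1$. The $L^2$ norm of $v\mp 1$ there is controlled by $\alpha_W^{-1}\int W(v)$. On the middle interval $(-c'',c'')$, where $v$ may transit, we instead use Proposition \ref{ineq:mean_and_fractional} together with Morrey/Poincaré-type bounds. An alternative for the middle interval is to use $\ell=k$ directly. Note that Proposition \ref{ineq:interpolation} with $\ell=k$ gives $\theta=k/(k+s)$, so $\|v^{(k)}\|^2\lesssim |I|^{-2k}\|v\|^2+\|v\|^{2-2\theta}[v]^{2\theta}$.

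The assumption $c''\geq\rho\Phi^a(v)$ is what lets us invoke Lemma \ref{lemma:derivative_bounds} on $(c'',c')$ (and symmetrically). This yields a point $t$ where all $|v^{(\ell)}(t)|<1$, with $\e=1$. Anchoring there, we control $v^{(k)}$ via the $H^s$ seminorm through Proposition \ref{ineq:mean_and_fractional} on an interval of length of order $c''$ containing $(-c',c')$-relevant pieces. This produces a bound $\|v^{(k)}\|^2_{L^2(I)}\leq C(|I|\cdot 1+|I|^{2s}\Phi^a(v))$ with $|I|\sim c''$, which gives the factor $(c'')^{2s}$ after using $\Phi^a(v)\lesssim c''$.

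Finiteness of $\Phi^a(v)$ follows from the same computation with $T$ replaced by $c'$. The inner part is finite since $v\in H^{k+s}_{\rm loc}$, and the outer interactions are bounded by the tail integral above.

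\emph{Case $k=0$, $s>\frac12$.} Now $v(x)-v(y)$ with $x\geq T$ need not vanish for $|y|\geq c'$: when $y\leq -c'$, $v$ takes the opposite sign. We handle three subregions.
\begin{itemize}
\item $|y|<c'$: bounded as before by $C(T-c')^{-2s}\int_{-c'}^{c'}|v\mp1|^2$. We bound this integral by $C c'$ using $|v|\leq C$ (via Morrey's embedding) or $W$-control, which yields the $(c'/(T-c'))^{2s}$-type term after absorbing.
\item $y\leq -c'$: here $|v(x)-v(y)|=2$, and $\int_T^\infty\int_{-\infty}^{-c'}|x-y|^{-1-2s}\,\de y\,\de x\leq C\,T^{1-2s}$, which gives the $C/T^{2s-1}$ term.
\item $y\geq c'$: the integrand vanishes.
\end{itemize}

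We expect the main obstacle to be the uniform bound on $\|v^{(k)}\|_{L^2}$ (respectively $\|v\mp1\|_{L^2}$) on $(-c',c')$ in terms of $c''$ alone rather than $c'$. This is exactly where Lemma \ref{lemma:derivative_bounds} and the hypothesis $c''\geq\rho\Phi^a(v)$ must enter. If that fails, we would fall back on the cruder bound with $c'$ in place of $c''$.
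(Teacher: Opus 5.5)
Your architecture matches the paper's. The local terms cancel, and the tail is at most $C(T-c')^{-2s}\|v^{(k)}\|^2_{L^2}$. The middle part of $\|v^{(k)}\|^2$ is handled by Proposition~\ref{ineq:mean_and_fractional}, anchored at points from Lemma~\ref{lemma:derivative_bounds}, and the outer parts by Proposition~\ref{ineq:interpolation} applied to $v\mp1$; your three-region split for $k=0$ is also the paper's. But the step you yourself flag as the crux does not go through as written.

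First, Lemma~\ref{lemma:derivative_bounds} can only be used on an interval of length at least $\rho\Phi^a(v)$, after rescaling to $(0,1)$. The paper takes $\e=1/(2T)$ and $u_\e(x)=v((x-\frac12)/\e)$; ``$\e=1$'' makes no sense, since $(0,1)$ cannot contain intervals of length $c''>1$. The hypothesis $c''\ge\rho\Phi^a(v)$ controls neither $c'-c''$ nor where the anchor falls in $(c'',c')$. That interval may be too short, and if the anchor lies near $c'$ the Poincar\'e interval has length of order $c'$. You then only get your $(c')^{2s}$ ``fallback'', which is not the statement. The correct choice is $I_\pm=\pm(c'',2c'')$. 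Each has length exactly $c''\ge\rho\Phi^a(v)\ge\rho\Phi^a_T(v)$ and lies in $\{|x|\ge c''\}\cap(-T,T)$, which uses $T>3c''$. This gives $x_\pm\in I_\pm$ with $|v^{(k-1)}(x_\pm)|\le1$ (for $k=1$, simply $|v(x_\pm)|\le1+\eta$) and $x_+-x_->2c''$. So Proposition~\ref{ineq:mean_and_fractional} applies on $(-2c'',2c'')$ with $J=(x_-,x_+)$.

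Second, that proposition yields
\[
\|v^{(k)}\|^2_{L^2((-2c'',2c''))}\le C\Big(\frac{1}{c''}\big|v^{(k-1)}(x_+)-v^{(k-1)}(x_-)\big|^2+(c'')^{2s}\,[v]^2_{H^{k+s}((-2c'',2c''))}\Big).
\]
The mean term carries $|I|^{-1}$, not $|I|$; a term of size $c''$ is not $O((c'')^{2s})$ when $s<\frac12$. More seriously, the factor $(c'')^{2s}$ cannot come from inserting $\Phi^a(v)\lesssim c''$. That turns $(c'')^{2s}\Phi^a(v)$ into $(c'')^{1+2s}$ and ruins the estimate. In the paper's liminf argument, where $c''=\theta\tau/\e$ and $T=\tau/\e$, it would diverge as $\e\to0$.

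The hypothesis $c''\ge\rho\Phi^a(v)$ serves only to make Lemma~\ref{lemma:derivative_bounds} applicable. The constant in the statement must be allowed to depend on a bound for $\Phi^a(v)$. Then, using $c''>1$, one gets $\frac{1}{c''}+(c'')^{2s}\Phi^a(v)\le(1+\Phi^a(v))(c'')^{2s}$. This is harmless wherever the lemma is applied, since the energies are uniformly bounded there.

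The outer pieces are treated on $(2c'',T)$ and $(-T,-2c'')$, whose length exceeds $c''>1$, again because $T>3c''$. So the factor $|I|^{-2k}$ in Proposition~\ref{ineq:interpolation} stays bounded, and Young's inequality gives $\|v^{(k)}\|^2_{L^2((2c'',T))}\le C\Phi^a(v)$.

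For $k=0$ your argument is fine up to one detail. A $c'$-independent bound $|v|\le C$ does not follow from Morrey's embedding alone: anchored at $c'$ it gives $|v(x)-1|\le C|x-c'|^{s-\frac12}$, which is exactly what the paper integrates to get $(c')^{2s}$. You must combine H\"older continuity with $|\{|v|\ge2\}|\le C\Phi^a(v)$; after that, $c'\le(c')^{2s}$ gives the claim. Finally, read off finiteness of $\Phi^a(v)$ at some fixed $T>c'$, since your tail bound degenerates at $T=c'$.
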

\begin{proof}  Since $v(x) = \pm 1$ for $|x|\geq c'$, we have
    \[\int_{-T}^{T}W(v)\,\de x=\int_{-c'}^{c'}W(v)\,\de x = \int_{\R}W(v)\,\de x.\]
    %This implies that $\Phi(v,\infty)-\Phi_T(v)$ only depends on the non-local term of $\Phi^a$, which is continuous with respect to the $H^{k+s}$ strong topology. Therefore we may assume $v\in C^\infty(\R)$ by density of $C^\infty$ in $H^{k+s}$. 
   This implies that $\Phi^a(v)-\Phi^a_T(v)$ only depends on the non-local term of $\Phi^a$. 
    
\paragraph{Case $k\geq 1$.} Since $v^{(k)}\equiv 0$ on $\R\setminus (-c',c')$, we have
    
\begin{equation}\label{eq:tail_stima_generale}
\begin{split}
    \Phi^a(v ) -\Phi^a_T(v )
    &\le2\beta_a\int_{\R\setminus(-T,T)}\int_{\R}\frac{| v^{(k)}(x)|^2}{|x-y|^{1+2s}}\de x\,\de y\\
    &=2\beta_a\int_{\R\setminus(-T,T)}\int_{-c'}^{c'}\frac{| v^{(k)}(x)|^2}{|x-y|^{1+2s}}\de x\,\de y\\
    &=\frac{\beta_a}{s}  \int_{-c'}^{c'}\frac{| v^{(k)}(x)|^2}{|x-T|^{2s}}+ \frac{| v^{(k)}(x)|^2}{|x+T|^{2s}}\de x \\  
    &\leq \frac{2\beta_a}{s|c'-T|^{2s}}  \int_{-c'}^{c'}|v^{(k)}(x)|^2\de x \\
    &= \frac{2\beta_a}{s(T-c')^{2s}}\|v^{(k)}\|^2_{L^2((-T,T))}.
\end{split}
\end{equation}
In particular, $\Phi^a(v)<\infty$.
    We have now to investigate the dependence of $\|v^{(k)}\|^2_{L^2((-T,T))}$ on $c',c''$.
   Set $\e=\frac{1}{2T}$, for $k\geq 2$, we apply Lemma \ref{lemma:derivative_bounds}  to the functions 
   \[u_\e(x)=v\Big(\frac{1}{\e}\Big(x-\frac{1}{2}\Big)\Big) \quad \text{for }x\in(0,1),\]
   and the intervals $J_\pm\coloneqq\e I_\pm+1/2$, where
    \[I_+=\big(c'',2c''\big), \qquad I_- =-I_+.\]
    Indeed,  $||v|-1|\leq \eta$ on $I_\pm$ and so $u_\e$ on $J_\pm$; while 
    \[|J_\pm|=\e |I_\pm|\geq \e\rho \Phi^a(v)\geq\e\rho F_\e^{\tilde{a}_\e}(u_\e),\] 
   for $\rho$ the same constant as in Lemma \ref{lemma:derivative_bounds} and $\Tilde{a}_\e(x,y) = a\left(\frac{x}{\e}-\frac{1}{2\e}, \frac{y}{\e}-\frac{1}{2\e}\right)$.
    
  Then, as a consequence of Lemma \ref{lemma:derivative_bounds}, there exist $x_+\in I_+$ and $x_- \in I_-$ such that 
\[|v^{(k-1)}(x_\pm)|\leq 1.\] 
We note that for $k=1$, $|v(x_\pm)|\leq 1+\eta$ already for any $x_\pm\in I_\pm$, and there is no need to use the Lemma.

In any case, since $ \abs{x_+-x_-} > 2c''$, we can now apply the Proposition \ref{ineq:mean_and_fractional} on the intervals 
\[  (-2c'',2c'') \supset  (x_-,x_+),\] 
obtaining:
\begin{equation}\label{eq:tail_stima_centrale}
    \begin{split}
        \|v^{(k)}\|^2_{L^2((-2c',2c'))} &\leq C\Big(\frac{1}{c''}\big|v^{(k-1)}(x_+)-v^{(k-1)}(x_-)\big|^2 + (c'')^{2s}\big[v^{(k)}\big]^2_{H^s((-2c',2c'))}\Big)\\
        &\leq C\Big(\frac{1}{c''}+(c'')^{2s}\Phi^a_{2c'}(v)\Big)\\
        &\leq C\big(1+(c'')^{2s}\Phi^a_T(v)\big).
    \end{split}
\end{equation}
On the interval $(2c'',T )$ we apply instead Proposition \ref{ineq:interpolation}, and the  \emph{Young's inequality}:
\begin{equation}\label{eq:tail_stime_laterali}
 \begin{split}
        \|v^{(k)} \|^2_{L^2((2c'',T ))} &\leq  C\Big(\Big(1+\frac{1}{(T-2c'')^{2k}}\Big)\|v\pm 1\|_{L^2((2c'',T ))}^2+[v]_{H^{k+s}((2c'',T))}^2\Big)\\
        &\leq  C(\|v\pm 1\|_{L^2((2c'',T ))}^2+[v]_{H^{k+s}((2c'',T))}^2)\\
        &\leq C  \Phi^a_T(v).
 \end{split} \end{equation}
    Together, \eqref{eq:tail_stima_centrale}, \eqref{eq:tail_stime_laterali} and an analogous inequality for $(-T,-2c'')$ imply
\begin{equation*}
    \begin{split}
        \|v^{(k)}\|^2_{L^2((-T,T))}\leq C\big(1+(c'')^{2s}\Phi^a_T(v)\big) ,
    \end{split}
\end{equation*}
which, combined with \eqref{eq:tail_stima_generale} and  $\Phi_T^a(v)\leq \Phi^a(a)$, grants the claim.

\paragraph{Case $k=0$.} Without loss of generality, we consider $v(x)= \sgn(x)$ for $|x|\geq c'$. Then,
\begin{equation}
\label{bound k=0 tails}
    \Phi^a(v ) -\Phi^a_T(v )
    \leq  2\beta_a\int_{\R\setminus(-T,T)}\int_{\R}\frac{| v(x)-\sgn(y)|^2}{|x-y|^{1+2s}}\de x\,\de y.
\end{equation}
We will estimate the contribution relative to positive $y\in (T,+\infty)$; the computations for negative $y$ are analogous.
\begin{equation}
\label{k=0 bound pos x}
    \begin{split}
\int_{T}^{+\infty}\int_{\R}\frac{| v(x)-1|^2}{|x-y|^{1+2s}}\de x\,\de y = 
&\int_{T}^{+\infty}\int_{-\infty}^{-c'}\frac{4}{|x-y|^{1+2s}}\de x\,\de y \\
&+\int_{T}^{+\infty}\int_{-c'}^{c'}\frac{| v(x)-1|^2}{|x-y|^{1+2s}}\de x\,\de y.
\end{split}
\end{equation}
By a direct computation (recall that $s>\frac{1}{2}$),
\begin{equation}
\label{bound far x}
    \int_{T}^{+\infty}\int_{-\infty}^{-c'}\frac{1}{|x-y|^{1+2s}}\de x\,\de y = \frac{1}{2s(2s-1)}\frac{1}{(T+c')^{2s-1}}\leq \frac{C}{T^{2s-1}}.
\end{equation}
On the other hand, 
\[
\begin{split}
    \int_{T}^{+\infty}\int_{-c'}^{c'}\frac{| v(x)-1|^2}{|x-y|^{1+2s}}\de x\,\de y &= \frac{1}{2s}\int_{-c'}^{c'}\frac{| v(x)-1|^2}{|T-x|^{2s}}\de x\\
    &\leq \frac{1}{2s|T-c'|^{2s}}\int_{-c'}^{c'}|v(x)-1|^2
    \,\de x
    \end{split}
\]
However, as $s>\frac{1}{2}$ by Morrey's embedding on the interval $(-T,T)$, we have
\begin{align*}
    |v(x)-1|&=|v(x)-v(c')|\\
    &\leq C [v]_{H^s((-T,T))}|x-c'|^{s-\frac{1}{2}}\\
    &\leq C (\Phi^a(v))^\frac{1}{2} |x-c'|^{s-\frac{1}{2}}\\
    &\leq C |x-c'|^{s-\frac{1}{2}},
\end{align*}
from which it follows that 
\begin{equation}
\label{last bound tails}
    \int_{T}^{+\infty}\int_{-c'}^{c'}\frac{| v(x)-1|^2}{|x-y|^{1+2s}}\de x\,\de y \leq C\frac{(c')^{2s}}{|T-c'|^{2s}}.
\end{equation}
Then we conclude by combining \eqref{bound k=0 tails}, \eqref{k=0 bound pos x}, \eqref{bound far x}, and \eqref{last bound tails}.

\end{proof}

With Lemmas \ref{lemma:flattening} and \ref{lemma:vanishing_tails} one can prove the following relation between $m^\omega(a)$ and $m^\omega(a,T)$ (see Definition \ref{def:transition_energies}). %\Edo{(riscrivere con la nuova notazione di $m$)}

\begin{Corollary}\label{coroll:min_equivalence}
For any homogenization kernel $a$ and $\omega\in \{\pm 1\}$
    \[m^{\omega}(a) = \lim_{T\rightarrow +\infty} m^{\omega}(a,T) = \inf_{T>0} m^{\omega}(a,T).\]
\end{Corollary}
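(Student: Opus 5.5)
Note first that $m^\omega(a,T)$ is non-increasing in $T$, since an admissible competitor for $T$ is admissible for every $T'>T$. Hence $\lim_{T\to+\infty}m^\omega(a,T)=\inf_{T>0}m^\omega(a,T)$. Moreover, any $v$ with $v=\omega\sgn(x)$ for $|x|>T$ has limits $\pm\omega$ at $\pm\infty$. So every admissible function for $m^\omega(a,T)$ is admissible for $m^\omega(a)$, and therefore $m^\omega(a)\le \inf_T m^\omega(a,T)$. It remains to prove the reverse inequality: given $\sigma>0$ and $v$ with $\lim_{x\to\pm\infty}v=\pm\omega$ and $\Phi^a(v)\le m^\omega(a)+\sigma$, I will produce, for some finite $T$, a function $w$ equal to $\omega\sgn(x)$ outside $(-T,T)$ with $\Phi^a(w)\le (1+C\sigma)(m^\omega(a)+\sigma)+\sigma$.

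\textbf{Step 1 (flattening).} Fix $\eta$ as in Lemma \ref{lemma:derivative_bounds}. The function $v$ is continuous, since $k+s>\frac12$ and Theorem \ref{thm:sobolev_embd} applies; together with its limits at infinity, this gives $c''>1$ such that $\big|v(x)-\omega\sgn(x)\big|\le\eta$ for $|x|\ge c''$. We may also enlarge $c''$ so that $c''\ge\rho\,(m^\omega(a)+1)$. Choose $N$ with $C/N<\sigma$, and then $c'$ with $c'-c''$ large enough that $C_N/(c'-c'')^{2s}<\sigma$. Next take a large $T_0>\max\{c',3c''\}$ and apply Lemma \ref{lemma:flattening} on both sides, in the form of Remark \ref{rmk:multiple_flattening}, case (1), on the interval $(-T_0,T_0)$, with $b=-T_0$, $b'=-c'$, $b''=-c''$, $c''$, $c'$, $c=T_0$. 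For $\omega=-1$ the lemma is applied to $-v$, or to its symmetric version. This gives $w$ on $(-T_0,T_0)$ with $w=\omega\sgn(x)$ for $c'\le|x|<T_0$ and
\[\Phi^a\big(w,(-T_0,T_0)\big)\le (1+2\sigma)\,\Phi^a\big(v,(-T_0,T_0)\big)\le(1+2\sigma)\Phi^a(v).\]
We then extend $w$ by $\omega\sgn(x)$ to all of $\mathbb R$.

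\textbf{Step 2 (tails).} The extended $w$ satisfies the hypotheses of Lemma \ref{lemma:vanishing_tails} with the same $c',c''$. One point needs care here: the requirement $c''\ge\rho\Phi^a(w)$ involves the full energy of $w$, which is not yet controlled, since only $\Phi^a_{T_0}(w)$ is. I will handle this by observing that the tail estimate \eqref{eq:tail_stima_generale} bounds $\Phi^a(w)-\Phi^a_{T_0}(w)$ by $C(T_0-c')^{-2s}\|w^{(k)}\|^2_{L^2}$, which is finite. Taking $T_0$ large with $c',c''$ fixed then gives $\Phi^a(w)\le m^\omega(a)+1$, so the choice of $c''$ in Step 1 suffices. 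Alternatively, the lemma's proof only uses $\Phi^a_T$ on the right-hand sides, so one can simply run that argument directly. Lemma \ref{lemma:vanishing_tails} then yields
\[\Phi^a(w)\le\Phi^a_{T_0}(w)+C\Big(\frac{c''}{T_0-c'}\Big)^{2s}\quad\text{for }k\ge1,\]
and the analogous bound with the extra term $C T_0^{1-2s}$ for $k=0$. Both error terms are below $\sigma$ once $T_0$ is large. Since $w$ is admissible for $m^\omega(a,c')$, we get $\inf_T m^\omega(a,T)\le(1+2\sigma)(m^\omega(a)+\sigma)+\sigma$, and letting $\sigma\to0$ concludes.

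\textbf{Main obstacle.} The delicate point is the order of the parameter choices. $N$ and $c'-c''$ are fixed first, depending on $\sigma$. The admissibility constraint $c''\ge\rho\Phi^a(w)$ has to be arranged without circularity, using the a priori bound $\Phi^a(w)\le 2(m^\omega(a)+1)$. Only after all of these are fixed is $T_0\to\infty$ sent, so that the tail contribution of the flattened function vanishes.
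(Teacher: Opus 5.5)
Your proof is correct and follows essentially the paper's argument: take a near-optimal profile, flatten both tails with Remark~\ref{rmk:multiple_flattening}, case (1), on $(-T,T)$, control $\Phi^a-\Phi^a_T$ of the flattened function by Lemma~\ref{lemma:vanishing_tails}, and send $T\to+\infty$ before the flattening parameters. You also check the hypothesis $c''\ge\rho\Phi^a(\cdot)$ for the flattened function, observing that the lemma's proof only needs $c''\ge\rho\Phi^a_T(\cdot)$; this settles a point the paper passes over by imposing the condition only on the original profile (note that \eqref{eq:tail_stima_generale} covers only $k\geq1$, but for $k\in\{0,1\}$ that hypothesis is never used).
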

 
\begin{proof} We adapt the reasoning of Proposition 14 in \cite{Solci_2024}. The inequality $m^\omega(a,T) \geq m^\omega(a)$, as well as the existence of the limit and the second equality, follow from the monotonicity of the infimum with respect to enlargements of the domain. 

 We fix a $N\in\N$ and consider $u\in H^{k+s}_{\operatorname{loc}}(\R)$ such that $\lim_{x\to \pm\infty} u(x) = \pm \omega$ and
    \begin{equation}
        \label{cor bound 1}
        \Phi^{a}(u) \leq m^\omega(a) + \frac{1}{N}.
    \end{equation}
      
    By the asymptotic behavior of $u$, there exists $c_N >0$, depending on $N$, such that $\abs{u(x)-\omega \sgn(x)}<1/N$ if $|x|> c_N$. Moreover, we can always suppose that $c_N\geq \rho \Phi^a(u)$, where $\rho$ is the constant appearing in Lemma \ref{lemma:derivative_bounds}.
    
     We use the flattening Lemma \ref{lemma:flattening}, in particular the case (1) pointed out in Remark \ref{rmk:multiple_flattening}, for some  
     \[c''> c_N,\quad c'>2c'', \quad c>c',\]
     obtaining a function $v\in H^{k+s}_{\operatorname{loc}}(\R)$ such that $v(x) = \omega \sgn(x)$ for $|x|> c'$ and such that, set $T=c$,
     \begin{equation}
     \label{cor bound 2}
        \begin{split}
            \Phi^{a}_T(v)&\leq \left(1 +  \frac{C}{N}+ \frac{C_N}{(c'')^{2s}}\right)\Phi^{a}_T(u)\\
            &\leq \left(1 +  \frac{C}{N}+ \frac{C_N}{(c'')^{2s}}\right)\Phi^{a}(u).%\\
            %&\leq \left(1 +  \frac{C}{N}+ \frac{C_N}{(c'')^{2s}}\right)\Big(m^\omega(a)+\frac{1}{N}\Big).
        \end{split}
         %+ h\int_{\{|x|\in (c'',c')\}}W(u(x))\,\de x.
     \end{equation}
  Lastly, Lemma \ref{lemma:vanishing_tails} tell us that $\Phi^{a}(v) \leq \Phi^{a}_T(v) + o_T(1)$ as $T \to \infty$ keeping $c'$ and $c''$ fixed. Then, since $v$ is an admissible competitor for $m^\omega(a,T)$, combining  \eqref{cor bound 1} and \eqref{cor bound 2}, we have
  \begin{align*}
       m^\omega(a,T)&\leq \Phi^a(v)\leq \left(1 +  \frac{C}{N}+ \frac{C_N}{(c'')^{2s}}\right)\Big(m^\omega(a)+\frac{1}{N}\Big)+o_T(1),
  \end{align*}
  and we conclude by letting $T\to \infty$, $c''\to \infty$ and $N\to \infty$ in this order.
    \end{proof}

\section{Coercivity of the rescaled energy}

In this section, we show that the rescaled energies $\Phi^a$ enjoy a coercivity property stronger than that of Proposition \ref{thm:compactness}: sequences that are bounded in energy have $k$-th derivative bounded in $L^2$, and so in $H^{s}$. 

A first consequence will be Proposition \ref{prop:positivity_of_transition_energies}: the transition energies $m^\omega(a)$ (recall Definition \ref{def:transition_energies}) are strictly positive. Hence, the limit energies of Theorem \ref{thm: result} are not trivially zero. 
Furthermore, this improved coercivity allows the extraction of subsequences strongly converging in $H^k$, which will be critical to prove the lower bound in the case $\delta\ll \e$ (see section \ref{Gamma-liminf}).

To deal with the case of $k=0$, for a given a sequence $(v_\e)_\e$ bounded in energy we will need a suitable sequence $(\sigma_\e)_{\e}$ of smooth functions, each regularizing $\sgn(v_\e)$. This function $\sigma_\e$ deals with the issue that the sequence $(v_\e)_\e$ is not bounded in $L^2$, letting us recover that $v_\e-\sigma_\e$ is bounded in $H^s$.

Throughout the rest of this section $(a_\e)_\e$ is a uniform family of homogenization kernels and $A_\e^\eta$ is defined as in Remark \ref{rmk:equibounded_intervals} for some fixed $\eta\in (0,1)$ as in Lemma \ref{lemma:derivative_bounds}.

\begin{Lemma}[Regularization of sign functions]
\label{lemma:sign_v_regularization} 
%any sequence $\delta=\delta_\e = o_\e(1)$ \textcolor{red}{(definerei la sequenza dei delta una volta per tutte ad inizio sezione)}
    Let $\tau\in (0,\frac{1}{2})$ and let 
    $(v_\e)_{\e}$ be a sequence in $H^{s}_{\operatorname{loc}}(\R)$,  $s\in (\frac{1}{2},1)$, such that \[\sup_\e\Phi^{a_{\scriptstyle \e}}_{\tau /\e}(v_\e)< \infty.\]
    
    Then, there exists a sequence $(\sigma_\e)_{\e}$ in $ C^\infty ((-\tau/\e,\,\tau/\e ))$, depending on the fixed $\eta\in (0,1)$ and $(v_\e)_\e$, such that 
    \begin{enumerate}
        \item[(i)] $\sigma_\e(x) = \sgn\big(v_\e(x)\big) $ for $ x\notin A_\e^\eta$.
        \item[(ii)] $|\sigma_\e|\leq 1$ everywhere.
        \item[(iii)] $\displaystyle \sup_\e||(\sigma_\e)'||_{L^\infty\left(\left(- \tau/\e, \, \tau/\e\right) \right)}<+\infty$.
    \end{enumerate}
\end{Lemma}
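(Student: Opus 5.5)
The construction rests on the structure of $A^\eta_\e$ from Remark \ref{rmk:equibounded_intervals}. Since $s>\frac12$ and $k=0$, Morrey's embedding (Theorem \ref{thm:sobolev_embd}) makes $v_\e$ continuous, so the remark applies. The set $A^\eta_\e\subset(-\tau/\e,\tau/\e)$ is a union of at most $M=M_\eta$ disjoint intervals $I_1,\dots,I_M$, with $M$ independent of $\e$. On the complement, which is a union of at most $M+1$ intervals, we have $\big||v_\e|-1\big|<\eta<1$. Hence $v_\e$ does not vanish there, and by continuity $\sgn(v_\e)$ is constant, equal to $\pm1$, on each connected component of $(A^\eta_\e)^{\mathsf c}$. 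So $\sgn(v_\e)$ restricted to the complement is already a piecewise constant function with finitely many pieces. The only task is to interpolate it smoothly across each $I_n$ with a uniformly bounded slope.

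The difficulty is that some $I_n$ may be very short. Sign changes can happen only across intervals $I_n$ that separate a component where $v_\e>1-\eta$ from one where $v_\e<-1+\eta$. I therefore want a lower bound on the length of any such interval. This comes from the Hölder estimate $|v_\e(x)-v_\e(y)|\le C_s[v_\e]_{H^s}|x-y|^{s-1/2}$. The seminorm is uniformly bounded by $\alpha_a^{-1}\sup_\e\Phi^{a_\e}_{\tau/\e}(v_\e)$, and $v_\e$ must vary by at least $2-2\eta$ across the interval. Together these give $|I_n|\ge \ell_0>0$ with $\ell_0$ independent of $\e$.

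The one point I must verify is that the Morrey constant $C_s$ does not depend on the length of $(-\tau/\e,\tau/\e)$, which grows as $\e\to0$. I would check this by applying the embedding on a unit-length subinterval containing the two points, or by using the scale-invariant form of the Hölder seminorm estimate. In either case the constant depends only on $s$, and this step is the main potential obstacle.

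With this bound in hand, I fix a smooth monotone transition profile $\psi$ going from $-1$ to $1$ over an interval of length $\ell_0$ (for instance $\ell_0/2$ centered, with constant values at the ends). On each $I_n$ at whose endpoints $\sgn(v_\e)$ takes different values, I set $\sigma_\e$ to be the suitably translated and reflected copy of $\psi$, centered in $I_n$. Where the signs at the two endpoints agree, I set $\sigma_\e$ equal to that constant. If $I_n$ touches the boundary $\pm\tau/\e$, only one adjacent sign exists, and I set $\sigma_\e$ constant equal to it.

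Because the profile is constant near the ends of each $I_n$, $\sigma_\e$ is $C^\infty$. By construction it satisfies (i) and (ii). It satisfies (iii) with $\|\sigma_\e'\|_\infty\le\|\psi'\|_\infty\le C/\ell_0$, a bound independent of $\e$. If some $v_\e$ are only in $H^s_{\mathrm{loc}}$ without continuity up to the boundary, I would work on slightly smaller open intervals; this does not affect the uniformity of the bound.
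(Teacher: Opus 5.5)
Your proposal is correct and follows essentially the same route as the paper. On each component of $A^\eta_\e$ you take $\sigma_\e$ constant where $\sgn(v_\e)$ does not change. On the remaining components, the H\"older estimate, combined with the uniform bound on $[v_\e]_{H^s}$ and a jump of at least $2(1-\eta)$, gives a lower bound on $|I|$ independent of $\e$, hence a slope $|\sigma_\e'|\le C/|I|$ bounded uniformly. Your extra care in two places fills in details the paper leaves implicit: you keep the Morrey constant uniform by working on a unit-length subinterval, so the bound becomes $\min\{1,\cdot\}$, and you obtain $C^\infty$ gluing from a profile that is constant near the endpoints.
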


\begin{proof} We just have to prove that it is possible to define $\sigma_\e$ in $A^\eta_\e$ such that \textit{(ii)} and \textit{(iii)} hold. We recall that the sets $A^\eta_\e$ contain all transition intervals, are bounded in measure uniformly in $\e$, and they are the union of a finite number (uniformly bounded in $\e$) of intervals. Moreover, $v_\e$ is $\eta$-close to a minimum of the double-well (i.e. $\left||v_\e|-1\right|<\eta$) on the complement of $A^\eta_\e$.

Let $I$ be any connected component of $A^\eta_\e$. If $\sgn(v_\e)$ takes the same value at both edges of the interval, then we simply define $\sigma_\e$ to be constant in $I$. Otherwise, we first note that
\begin{align*}
    S&\coloneqq \sup_\e \,[v_\e]^2_{H^s((-\tau/\e,\,\tau/\e))} \leq (\alpha_a)^{-1}\Phi^{a_{\scriptstyle \e}}_{\tau/\e}(v_\e)<+\infty.
\end{align*}
Since it is always possible to define $\sigma_\e$ such that $|\sigma_\e|\leq 1$ and $|(\sigma_\e)'|\leq \frac{4}{|I|}$ on $I$, we only have to show that the length of $I$ is bounded from below independently of $\e$.
Indeed, if $I=(t,t')$,
\begin{equation*}
    |v_\e(t )-v_\e(t')| \geq   2(1-\eta),
\end{equation*}
      so that, by the Hölder regularity of $v_\e$ (see Theorem \ref{thm:sobolev_embd}):
    \[2(1-\eta)\leq CS |I|^{s-\frac{1}{2}}.\]
In the end,
    \[|(\sigma_\e)'|\leq \frac{4}{|I|}\leq C\left(\frac{S}{2(1-\eta)}\right)^\frac{2}{2s-1}\quad \text{on }I.\]
    We conclude noting that this bound can be extended to the whole $A^\eta_\e$, as it does not depend on the particular $I$ chosen.
\end{proof}

 \begin{Remark}\label{rmk:sigma_properties} Given a sequence $(v_\e)_\e$ as in the previous lemma, it is clear that, for $\e$ small enough, $\sigma_\e$ can be extended (as $\pm 1$) on the whole real line while still satisfying the three properties \textit{(i), (ii), (iii)}. With a small abuse of notation, we will still denote by $\sigma_\e$ the extensions.
 
    Observe, though, that the sequence of the norms $\|\sigma_\e\|_{L^2((-\tau/\e,\,\tau/\e))}$ is not bounded in $\e$, as the measure of the complement of $A^\eta_\e$ is diverging. 
    Therefore, neither the sequence of norms
    \[ ||\sigma_\e||_{H^s((-\tau/\e,\,\tau/\e))}\]
     is bounded in general. Still, as the next result will show, the $H^s$ seminorms are uniformly bounded.
\end{Remark}

\begin{Lemma}
\label{sigma Hs bound}
   Under the same hypotheses of Lemma \ref{lemma:sign_v_regularization}, there exists a constant $C>0$ such that for all $\xi\in \R$ 
   \begin{equation}
   \label{sigma trasl dist bound}
       \int_{-\tau/\e}^{\tau/\e} \abs{\sigma_\e(x+\xi) - \sigma_\e(x)}^2\,\de x\leq C \min\{|\xi|,\xi^2\}.
   \end{equation}
    Moreover, the seminorms $[\sigma_\e]_{H^s((-\tau/\e,\,\tau/\e))}$ are uniformly bounded.
\end{Lemma}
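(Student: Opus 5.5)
The plan is to prove \eqref{sigma trasl dist bound} directly from the structure of $\sigma_\e$ given by Lemma \ref{lemma:sign_v_regularization} and Remark \ref{rmk:sigma_properties}, and then to obtain the seminorm bound by integrating the translation estimate against the kernel $|\xi|^{-1-2s}$.

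The key observation is that $\sigma_\e$ is nonconstant only on a set of uniformly bounded measure. By construction, outside $A^\eta_\e$ we have $\sigma_\e=\sgn(v_\e)$, and $\sgn(v_\e)$ can change sign only across a component of $A^\eta_\e$, since $||v_\e|-1|<\eta$ on each component of the complement. Hence $\sigma_\e$ is locally constant outside $A^\eta_\e$, so $(\sigma_\e)'$ is supported in $A^\eta_\e$ (after the extension, also possibly in a neighbourhood of the boundary of $(-\tau/\e,\tau/\e)$, which we may take constant). By Remark \ref{rmk:equibounded_intervals}, $A^\eta_\e$ has measure at most $C_\eta$. By Lemma \ref{lemma:sign_v_regularization}(iii), $\|\sigma_\e'\|_\infty\le L$ uniformly in $\e$. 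Thus $\|\sigma_\e'\|_{L^1}\le LC_\eta$ and $\|\sigma_\e'\|_{L^2}^2\le L^2C_\eta$, both uniformly in $\e$.

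We then split according to the size of $|\xi|$.

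\emph{Case $|\xi|\le1$.} Write $\sigma_\e(x+\xi)-\sigma_\e(x)=\int_0^1\sigma_\e'(x+t\xi)\xi\,\de t$. Jensen's inequality and Fubini give
\[
\int|\sigma_\e(x+\xi)-\sigma_\e(x)|^2\,\de x\le \xi^2\|\sigma_\e'\|^2_{L^2(\R)}\le C\xi^2 .
\]

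\emph{Case $|\xi|>1$.} Use $|\sigma_\e(x+\xi)-\sigma_\e(x)|\le2$, so the square is at most $2|\sigma_\e(x+\xi)-\sigma_\e(x)|$. We then have
\[
\int|\sigma_\e(x+\xi)-\sigma_\e(x)|\,\de x\le |\xi|\,\|\sigma_\e'\|_{L^1}\le C|\xi| .
\]
The two cases together give $C\min\{|\xi|,\xi^2\}$. Restricting the integral to $(-\tau/\e,\tau/\e)$ only decreases it, which is why the extension to $\R$ in Remark \ref{rmk:sigma_properties} is convenient.

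\emph{Seminorm bound.} Substitute $y=x+\xi$:
\[
[\sigma_\e]^2_{H^s}\le\int_\R|\xi|^{-1-2s}\int|\sigma_\e(x+\xi)-\sigma_\e(x)|^2\,\de x\,\de\xi\le C\int_\R\min\{|\xi|,\xi^2\}|\xi|^{-1-2s}\,\de\xi .
\]
The last integral is finite: near $0$ the integrand behaves like $|\xi|^{1-2s}$, which is integrable since $s<1$, and at infinity like $|\xi|^{-2s}$, which is integrable since $s>\frac12$.

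The main obstacle is the first step: justifying that $\sigma_\e$ is constant outside $A^\eta_\e$, in particular that the extension near $\pm\tau/\e$ does not introduce variation on sets of unbounded measure. This holds because the extension is by constants $\pm1$ and $\sgn(v_\e)$ is constant on each component of the complement. If the construction instead leaves finitely many extra transition intervals, they are still at most $M+1$ in number and of bounded length, so the $L^1$ and $L^2$ bounds on $\sigma_\e'$ survive.
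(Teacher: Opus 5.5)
Your proof is correct and has the same skeleton as the paper's. You observe that $\sigma_\e'$ is uniformly bounded and supported in $A^\eta_\e$, whose measure is at most $C_\eta$. The quadratic bound comes from $\xi^2\|\sigma_\e'\|^2_{L^2}$ via Jensen and Fubini, exactly as in the paper. The seminorm bound follows by integrating $\min\{|\xi|,\xi^2\}$ against $|\xi|^{-1-2s}$, which is the same computation.

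The one genuine difference is the linear bound. The paper squares the pointwise estimate $|\sigma_\e(x+\xi)-\sigma_\e(x)|\le C\,|[x,x+\xi]\cap A^\eta_\e|$ and splits $A^\eta_\e$ into its $M$ components, which costs a factor depending on $M$. It then computes the trapezoidal integrals $\int|[x,x+\xi]\cap I_n|^2\,\de x$ explicitly, and only for $|\xi|\ge|A^\eta_\e|$. You instead use $|\sigma_\e|\le 1$ to bound the square by twice the absolute value, and then apply the standard $L^1$ translation estimate $\|\sigma_\e(\cdot+\xi)-\sigma_\e\|_{L^1}\le|\xi|\,\|\sigma_\e'\|_{L^1}$.

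Your route is shorter and holds for every $\xi$ without the case distinction. It involves only $\sup_\e\|\sigma_\e'\|_\infty$ and $C_\eta$, not the number of components. The paper's route does not use property (ii), but it needs the explicit interval structure of $A^\eta_\e$. Your remark on the extension near $\pm\tau/\e$ is also adequate: extending by the constant value of $\sgn(v_\e)$ adds no variation. If $A^\eta_\e$ touches the boundary, one extra bounded-slope transition of bounded length does not affect either bound.
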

We notice that the constant $C$ depends only on $\eta$ and on $\sup_\e \norm{(\sigma_\e)'}_\infty$.
\begin{proof}
    We observe at first that the derivatives $(\sigma_\e)'$ are equibounded and they different from $0$ only on $A_\e^\eta$ (defined between \eqref{eq:A_eta} and \eqref{def:A_eta} in Remark \ref{rmk:equibounded_intervals}). Therefore, for $\xi>0$,
    \[
        \abs{\sigma_\e(x+\xi) - \sigma_\e(x)} \leq \int_{[x,{x+\xi}] \cap A^\eta_\e}\abs{\sigma_\e'(t)}\,\de t \leq C \big |[x,x+\xi] \cap A^\eta_\e\big|.
    \]
    Then, recalling that $A_\e^\eta = \bigsqcup_{n=1}^M I_n$, we can compute the integral for $|\xi| \geq |A_\e^\eta|$: 
    \begin{equation}
    \label{lin bound}
        \begin{split}
            \int_{-\tau/\e}^{\tau/\e} \abs{\sigma_\e(x+\xi) - \sigma_\e(x)}^2\,\de x 
            &\leq C\int_{-\tau/\e}^{\tau/\e}\big |[x,x+\xi] \cap A_\e^\eta\big|^2\,\de x\\
            & \leq C \sum_{n=1}^M \int_{-\tau/\e}^{\tau/\e} \big |[x,x+\xi] \cap I_n\big|^2\,\de x\\
            &\leq C\sum_{n=1}^M \Big( 2\int_0^{|I_n|}t^2\,\de t + (\xi- |I_n|)|I_n|^2 \Big),
        \end{split}
    \end{equation}    
    where the first term bounds the contributions of those $x$ such that $[x,x+\xi] \not\supset I_n$ and the second one corresponds to $[x,x+\xi] \supset I_n$. Since $\sum_{n=1}^M|I_n|=|A_\e^\eta|$, from \eqref{lin bound} we get
     \begin{equation}
    \label{lin bound2}
            \int_{-\tau/\e}^{\tau/\e} \abs{\sigma_\e(x+\xi) - \sigma_\e(x)}^2\,\de x \leq C\sum_{n=1}^M\left(\xi|I_n|^2-\frac{|I_n|^3}{3}\right)\leq C |A^\eta_\e|^2 \xi.
    \end{equation}      
    On the other hand,
  \begin{equation}
      \label{quad bound}
      \begin{split}
            \int_{-\tau/\e}^{\tau/\e} \abs{\sigma_\e(x+\xi) - \sigma_\e(x)}^2\,\de x 
            &\leq \int_{-\tau/\e}^{\tau/\e}\Bigg |\int_0^1(\sigma_\e)'(x+t\xi)\cdot \xi\,\de t\Bigg|^2\,\de x\\
            &\leq \xi^2\int_0^1\int_{-\tau/\e}^{\tau/\e}\big |(\sigma_\e)'(x+t\xi)\big|^2\,\de x\,\de t\\
           & \leq C|A_\e^\eta|\xi^2.
        \end{split}
  \end{equation}
    Combining \eqref{lin bound2} and \eqref{quad bound}, recalling that $|A_\e^\eta|\leq C_\eta$ uniformly on $\e$, we get \eqref{sigma trasl dist bound}, from which the bound on the $H^s$ seminorm follows easily: 
    \[
        \begin{split}
            \int_{-\tau/\e}^{\tau/\e} \int_{-\tau/\e}^{\tau/\e} \frac{\abs{\sigma_\e(y) - \sigma_\e(x)}^2}{|x-y|^{1+2s}}\, \de x\,\de y 
            &\leq \int_\R\int_{-\tau/\e}^{\tau/\e} \frac{\abs{\sigma_\e(x+\xi) - \sigma_\e(x)}^2}{\xi^{1+2s}}\, \de x\,\de \xi\\
            &\leq C\int_\R \frac{\min\{1,|\xi|\}}{|\xi|^{2s}}\, \de \xi <+\infty,
        \end{split}
    \]
    since $ \min\{1,|\xi|\}|\xi|^{-2s}$ is integrable for $2s\in (1,2)$.
\end{proof}

We can now prove the coercivity property of $\Phi^a$.

\begin{Theorem}[Coercivity in $H^{k+s}_{\operatorname{loc}}(\R)$]
    \label{coercivity}
    Let $(v_\e)_{\e}$ be a sequence in $H^{k+s}_{\operatorname{loc}}(\R)$ such that, for some  $\tau \in (0,1/2)$,
    $$\abs{v_\e (x)} \equiv 1,\quad\text{for } \abs{x}\geq \frac{\tau}{\e},$$
    and $S\coloneqq \sup_\e \Phi^{a_{\scriptstyle\e}}_{\tau/\e}(v_\e)<\infty$. 
    
    For $k \geq 1$, the sequence of the derivatives $(v_\e^{(k)})_{\e}$ is bounded in $L^2(\R)$:
    % in term of the energies $\Phi^{a_{\scriptstyle \e}}$
    there exists a constant $C>0$ depending on $S$ such that for all $\e $:
    \begin{equation}
        \label{coerc bound k>0}
        \|v_\e^{(k)}\|_{L^2(\R)}= \|v_\e^{(k)}\|_{L^2((-\tau/\e,\,\tau/\e))}\leq  C.
    \end{equation} 
    %Moreover, the $v_\e$ are equibounded and\Edo{, on any bounded interval $I$, there exists a constant $C>0$ dependent on $I$ only by its lenght $|I|$ such that
    %\[\sup_\e ||v_\e||_{H^{k+s}(I)} \leq C.\]}
    %thus, on a bounded interval $I$,
    %the norms $\|v_\e\|_{L^2(I)}$ and $\|v_\e\|_{H^{k+s}(I)}$ (recall Proposition \ref{ineq:interpolation}) are uniformly bounded.
    
    For $k =0$, recall the sequence $(\sigma_\e)_\e$ provided by Lemma \ref{lemma:sign_v_regularization} and Remark \ref{rmk:sigma_properties}. Then, the sequence $(v_\e-\sigma_\e)_{\e}$ is bounded in $L^2(\R)$:
    %in terms of the ener$\Phi^{a_{\scriptstyle \e}}$, 
    there exists a constant $C>0$, depending on $S$, such that for all $\e$:
    \begin{equation}
        \label{coerc bound k=0}
        \|v_\e-\sigma_\e\|_{L^2(\R)} =\|v_\e-\sigma_\e\|_{L^2((-\tau/\e,\,\tau/\e))}\leq C.
    \end{equation}
    
    Moreover, for each choice of $k$, the $v_\e$ are equibounded and on any bounded interval $K$ there exists a constant $C>0$ dependent only on $S$, $\sup_\e ||v_\e||_\infty$ and $|K|$ such that
    \begin{equation}
        \label{bound H^k+s loc}
        \sup_\e ||v_\e||_{H^{k+s}(K)} \leq C.
    \end{equation}
    
\end{Theorem}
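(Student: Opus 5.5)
For $k\ge 1$ the plan is to bound $\|v_\e^{(k)}\|_{L^2}$ separately on the "transition region" and on its complement, via the set $A^\eta_\e$ of Remark \ref{rmk:equibounded_intervals}. Since $|v_\e|\equiv1$ for $|x|\ge\tau/\e$, the derivatives vanish there, so everything happens on $(-\tau/\e,\tau/\e)$. The complement of $A^\eta_\e$ is a union of at most $M+1$ intervals on which $||v_\e|-1|<\eta$, and by continuity $v_\e$ has constant sign $\pm1$ on each.

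On such an interval $J$, if $|J|\ge1$ we apply Proposition \ref{ineq:interpolation} to $v_\e\mp1$ with $\ell=k$. With $\theta=k/(k+s)$, Young's inequality gives
\[
\|v_\e^{(k)}\|^2_{L^2(J)}\le C\big(|J|^{-2k}+1\big)\|v_\e\mp1\|^2_{L^2(J)}+C[v_\e]^2_{H^{k+s}(J)}\le CS,
\]
using $W(z)\ge\alpha_W(1-|z|)^2$ near the wells and $a\ge\alpha_a$.

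The remaining pieces are the components of $A^\eta_\e$ and the short complementary intervals (length $<1$). Their number is bounded by $2M+1$ and their total length by $C_\eta+M+1$. So the main obstacle is to control $v_\e^{(k)}$ on a bounded neighbourhood $K$ of each transition, where $v_\e$ is not close to the wells. Here I would enlarge each such piece $K$ by intervals of length comparable to $\rho S$ on which $||v_\e|-1|<\eta$; if no such interval is available, $K$ merges with a neighbouring bounded piece, and after finitely many merges we reach $\pm\tau/\e$, where $v_\e\equiv\pm1$ trivially provides them. On each added interval Lemma \ref{lemma:derivative_bounds} (applied after the rescaling used in Lemma \ref{lemma:vanishing_tails}) yields points $x_\pm$ with $|v_\e^{(k-1)}(x_\pm)|\le1$. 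For $k=1$ this is immediate, since $|v_\e|\le 1+\eta$ there. Then Proposition \ref{ineq:mean_and_fractional}, applied to $v_\e^{(k)}$ on the enlarged interval $\tilde K$ with $J=(x_-,x_+)$, gives
\[
\|v_\e^{(k)}\|^2_{L^2(\tilde K)}\le C\Big(\tfrac{1}{|\tilde K|}|v_\e^{(k-1)}(x_+)-v_\e^{(k-1)}(x_-)|^2+|\tilde K|^{2s}S\Big)\le C,
\]
exactly as in \eqref{eq:tail_stima_centrale}. This needs $|J|\ge|\tilde K|/2$, which I would secure by choosing the added lengths comparable to $|K|$. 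Summing over the boundedly many pieces gives \eqref{coerc bound k>0}.

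For $k=0$ and $s>1/2$ I would write $w_\e=v_\e-\sigma_\e$. Off $A^\eta_\e$ we have $|w_\e|=||v_\e|-1|$, so $\int_{(A^\eta_\e)^c}w_\e^2\le\alpha_W^{-1}S$. On $A^\eta_\e$ we have $|A^\eta_\e|\le C_\eta$ and $|\sigma_\e|\le1$. Moreover $v_\e$ is bounded there, either by Morrey's embedding (Theorem \ref{thm:sobolev_embd}) from a nearby point where $|v_\e|\le1+\eta$, at distance controlled by $|A^\eta_\e|$, or because $W\ge c>0$ for $|z|\ge2$ together with Hölder continuity forbids long excursions. Hence $\|w_\e\|_{L^2}\le C$, which is \eqref{coerc bound k=0}.

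Finally, for \eqref{bound H^k+s loc}: equiboundedness of $v_\e$ follows for $k=0$ from the argument above, and for $k\ge1$ from integrating the bounded $v^{(k)}$ via Theorem \ref{thm:sobolev_embd}(a) starting at points where $|v_\e|\le1+\eta$, which lie within bounded distance. Then $\|v_\e\|_{L^2(K)}\le C|K|^{1/2}$ and $[v_\e]_{H^{k+s}(K)}^2\le\alpha_a^{-1}S$. Since the $H^{k+s}$ norm is $L^2$ plus seminorm, and the intermediate derivatives are controlled by Proposition \ref{ineq:interpolation}, this yields the local bound.
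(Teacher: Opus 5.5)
Your plan is the paper's: Proposition \ref{ineq:interpolation} on long near-well intervals, Lemma \ref{lemma:derivative_bounds} combined with Proposition \ref{ineq:mean_and_fractional} on bounded neighbourhoods of the transitions, and, for $k=0$, the H\"older estimate from the endpoints of the components of $A^\eta_\e$. Your $k=0$ argument is correct. It is even slightly more direct than the paper's, since off $A^\eta_\e$ you bound $v_\e-\sigma_\e$ by $\alpha_W^{-1}\int W(v_\e)$ and never need the enlarged set $\widetilde A_\e$. For $k\ge1$, however, two steps do not work as written.

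The first is the ratio condition $2\abs{J}\ge\abs{\tilde K}$ in Proposition \ref{ineq:mean_and_fractional}. Lemma \ref{lemma:derivative_bounds} only tells you that $x_\pm$ lie somewhere in the added intervals, of length $\ell$ say. So you only know $x_+-x_-\ge\abs{K}$, while $\abs{\tilde K}=\abs{K}+2\ell$. This forces $\ell\le\abs{K}/2$, which is incompatible with $\ell\ge\rho S$ whenever $\abs{K}<2\rho S$; ``choosing the added lengths comparable to $\abs{K}$'' does not resolve this. The paper requires the adjacent near-well intervals to have length at least $2R$, $R=\rho S$ (this is how $\widetilde A_\e$ is defined). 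It then applies Lemma \ref{lemma:derivative_bounds} on $(b-2R,b-R)$ and $(c+R,c+2R)$, at distance $R$ from $K=(b,c)$, so that $x_+-x_-\ge\abs{K}+2R\ge\frac12(\abs{K}+4R)$ holds automatically. Likewise, when a merged piece reaches $\pm\tau/\e$, take $x_\pm=\pm\tau/\e$ themselves rather than points beyond; there $v_\e^{(k-1)}=0$ if $k\ge2$ and $\abs{v_\e}=1$ if $k=1$. The reason is that the seminorm of $v_\e^{(k)}$ on an interval straddling $\pm\tau/\e$ is not directly controlled by $S$.

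The second is the equiboundedness for $k\ge2$. Integrating the uniformly $L^2$-bounded $v_\e^{(k)}$ from a point $y$ where you only know $\abs{v_\e(y)}\le1+\eta$ does not bound $v_\e$ nearby, because the values $v_\e^{(\ell)}(y)$, $1\le\ell\le k-1$, are uncontrolled. For $k=2$, $v_\e(x)=v_\e(y)+v_\e'(y)(x-y)+O(\abs{x-y}^{3/2})$ with no information on $v_\e'(y)$. Morrey's embedding in Theorem \ref{thm:sobolev_embd}, applied to $v_\e^{(\ell-1)}$ with $\ell<k$, would require $L^2$ bounds on $v_\e^{(\ell)}$ over the transition pieces, which you have not established. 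The fix is to start from the points supplied by Lemma \ref{lemma:derivative_bounds}, where $\abs{v_\e^{(\ell)}}\le1$ simultaneously for all $\ell=1,\dots,k$, chosen in a near-well interval adjacent to each component of $\widetilde A_\e$. Then integrate successively $v_\e^{(k-1)},\dots,v_\e$ as in \eqref{bound k-1}--\eqref{bound 0}. With these two repairs your argument coincides with the paper's.
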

\begin{proof} 
    Since $S<+\infty$, we can recall Remark \ref{rmk:equibounded_intervals}: the set $A^\eta_\e$  is open, bounded in measure uniformly in $\e$, and it is a disjoint union of at most $M$ intervals, $M$ independent of $\e$. Its complement 
    \[\left(-\frac{\tau}{\e},\frac{\tau}{\e}\right)\setminus A^\eta_\e=\bigsqcup_{n}J_n\]
    is a disjoint union of at most $M+1$ closed intervals and $||v_\e|-1|\leq \eta$ on it.
        
    To apply Lemma $\ref{lemma:derivative_bounds}$, we have to focus on those intervals $J_n$ of length larger than $2R$, for $R\coloneqq\rho S$ and $\rho$ provided by Lemma $\ref{lemma:derivative_bounds}$. We let
    \begin{equation*}
        \widetilde{A}_\e\coloneqq \Big(\bigsqcup_{\substack{
       n\colon |J_n|\ge 2R}}J_n\Big)^{\mathsf c}
    \end{equation*}
    to be the complement of the union of these large intervals. We note that $\big|\widetilde{A}_\e\big|\leq |A^\eta_\e|+ 2(M-1)R$.
    
    \paragraph{Case $k\geq 1$.} Now, let us consider any $J_n$ with $|J_n|\geq 2R$ and apply Proposition \ref{ineq:interpolation} on $v_\e-\sgn(v_\e)$, paired with \emph{Young's inequality}:
    \begin{align}\label{eq:large_intervals_estimate}
        \|v_\e^{(k)}\|_{L^2(J_n)}^2 &\leq C\left((1 +R^{-2k})\norm{v_\e-\,\sgn(v_\e)}^2_{L^2(J_n)} + [v_\e]_{H^{k+s}(J_n)}^2\right)\\
        & \leq C\Phi^{a_{\scriptstyle \e}}(v_\e,J_n),\nonumber
    \end{align}
    where the estimate of $\|v_\e-\,\sgn(v_\e)\|^2_{L^2(J_n)}$ with $\Phi^{a_{\scriptstyle \e}}(v_\e,J_n)$ is analogous to \eqref{ineq:estimate L^2 norm}. 
    
    Let now $I$ be a connected component of $\widetilde{A}_\e$, say $I=(b,c)$. For $k\geq 2$, we can apply Lemma \ref{lemma:derivative_bounds} to
    \[I_-=(b-2R,b-R),\qquad I_+=(c+R,C+2R),\]
    since on both intervals $\abs{\abs{v_\e}-1}\leq \eta$. Thus we find
    \[x_- \in (b-2R,b-R),\quad x_+\in (c+R,c+2R),\]
    such that $|v_\e^{(k-1)}(x_\pm)|\leq 1$. While for $k=1$, we already have $|v(x_\pm)|\leq 1+\eta$ for any $x_\pm \in I_\pm$.
    Moreover, since 
    \[ x_+-x_- \geq |I|+2R \geq\frac{1}{2}\big|I+(-2R,2R)\big|,\]
    we can also apply Proposition \ref{ineq:mean_and_fractional} to $(x_-,x_+)\subset I+(-2R,2R)$ and get, using Young's inequality again:
    \[
       \begin{split}
            \|v_\e^{(k)}\|_{L^2(I)}^2&\leq \|v_\e^{(k)}\|^2_{L^2(I+(-2R,2R))}\\
            &\leq C\bigg(\frac{4}{|I|+4R}+\big(|I|+4R\big)^{2s}[v_\e]^2_{H^{k+s}(I+(-2R,2R))}\bigg).
       \end{split}
    \]
    However, since  $|I| \leq |A^\eta_\e|+2R(M-1)$ and $[v_\e]_{H^{k+s}(I+(-2R,2R))}$ is bounded by $\Phi^{a_{\scriptstyle \e}}(v_\e, I+(-2R,2R))$ which in turn is uniformly bounded, we have 
    \begin{equation}
    \label{ineq 5.4}
        \|v_\e^{(k)}\|^2_{L^2(I)}\leq C,
    \end{equation}
    which, combined with \eqref{eq:large_intervals_estimate}, gives \eqref{coerc bound k>0}.
    Note that if $I+(-2R,2R)$ is not strictly included in $\left(-\frac{\tau}{\e}, \frac{\tau}{\e}\right)$, then $\Phi^{a_{\scriptstyle \e}}(v_\e, I+(-2R,2R))$ is not necessarily uniformly bounded, but by an application of a classical extension theorem (e.g. Theorem 1.43 of \cite{Leoni_frac_sob_sp}) one can recover that \eqref{ineq 5.4} holds as well. 
    
Now we prove the equiboundedness of $v_\e$. By definition $|v_\e|<1+\eta$ on the large intervals $J_n$.
Let us consider again $I=(b,c)$, a connected component of $\tilde A_\e$. We apply  Lemma \ref{lemma:derivative_bounds} to $(b-R,b)$ to find a point $y\in (b-R,b)$ such that $|v^{(\ell)}_\e(y)|\leq 1$ for every $\ell \in\{1,\dots,k\} $. Then, for every $x\in (b-R,c)$ and $\ell \in \{1,\dots,k\}$, we have
 \begin{equation*}
         |v_\e^{(\ell-1)}(x)|\leq |v^{(\ell-1)}_\e(y)|+ \Big|\int_y^xv_\e^{(\ell)}(t)\,\de x\Big|,
 \end{equation*}
 and consequently
 \begin{align}
    \label{bound k-1} |v_\e^{(k-1)}(x)|&\leq 1 + \sqrt{|I|+R}\,\|v_\e^{(k)}\|_{L^2((b-R,c))},\\
    \label{bound ell} |v_\e^{(\ell-1)}(x)|&\leq 1 + (|I|+R)\|v_\e^{(\ell)}\|_{L^\infty((b-R,c))},\quad \text{for }\ell=2,\dots, k-1,\\
    \label{bound 0} |v_\e(x)|&\leq 1 + \eta + (|I|+R)\|v_\e'\|_{L^\infty((b-R,c))}
 \end{align}
Recalling that the bounds $|I|\leq C_\eta + 2R(M-1)$ are independent of $\e$ and the particular choice of $I\subset \Tilde{A}_\e$, from \eqref{bound k-1} and \eqref{coerc bound k>0}, we conclude that the $v_\e^{(k-1)}$ are bounded on $\Tilde{A}_\e$ independently of $\e$. 
Then, applying iteratively \eqref{bound ell} and at the end \eqref{bound 0}, we conclude that the $v_\e$ are bounded on $\Tilde{A}_\e$ independently of $\e$, and consequently also on the whole $\R$. 

Given that the sequence $(v_\e)_\e$ is equibounded, we have that there exists a $C>0$ independent of $\e$ such that for any bounded interval $K$
\[\|v_\e\|_{L^2(K)} = C\sqrt{|K|},\]
then \eqref{bound H^k+s loc} follows applying multiple times Lemma \ref{ineq:interpolation}.

    \paragraph{Case $k=0$.} In this case, we estimate directly on large intervals
    \begin{equation}
    \label{k=0 bound large inetrv}
        \norm{v_\e-\,\sigma_\e}^2_{L^2(J_n)} = \norm{v_\e-\,\sgn(v_\e)}^2_{L^2(J_n)}\leq C\Phi^{a_{\scriptstyle \e}}(v_\e,J_n),
    \end{equation}
    in place of \eqref{eq:large_intervals_estimate}.
    Then, we consider again $I$, a connected component of $\widetilde{A}_\e$. We notice that, by the Hölder regularity of $v_\e$, for every $x,y\in I$
    $$|v_\e(x)-v_\e(y)|^2 \leq C [v_\e]^2_{H^s(I)}|I|^{2s-1}\leq C|I|^{2s-1}\Phi^a(v_\e,I)\leq CS|I|^{2s-1}.$$ 
    
    By the continuity of $v_\e$,  $\left||v_\e(y)| - 1\right|\leq 2\eta$ for $y$ at boundary of $I$. For such $y$ and for any $x\in I$
    \[
        |v_\e(y)-\sigma_\e(x)| \leq 2(1+\eta).
    \]
    Thus, we have:
    \begin{align}
        \label{k=0 bound infty}\|v_\e-\sigma_\e\|_{L^\infty(I)}&\leq 2(1+\eta)+C|I|^{s-1/2},\\
        \label{k=0 bound 2}\|v_\e-\sigma_\e\|^2_{L^2(I)} &\leq \big(C|I|^{s-\frac{1}{2}}+ 2(1+\eta)\big)^2|I|.
    \end{align}
    Then, recalling that $|I|\leq C_\eta + 2R(M-1)$, we get that \eqref{coerc bound k=0} comes from \eqref{k=0 bound large inetrv} and \eqref{k=0 bound 2}, while the equiboundedness from \eqref{k=0 bound infty} and the fact that $|v_\e-\sigma_\e|<\eta$ outside of $A^\eta_\e$. 
    Then, \eqref{bound H^k+s loc} is a direct consequence.
\end{proof}

We can now prove that the transition energy is strictly positive, showing that the $\Gamma$-limit is not the trivial zero functional.
\begin{Proposition}\label{prop:positivity_of_transition_energies}
    For any homogenization kernel $a$ and $\omega\in \{\pm 1\}$, $$m^\omega(a) >0.$$
\end{Proposition}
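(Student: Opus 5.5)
The simplest route is a comparison argument that reduces the claim to the homogeneous case. Since $a\geq\alpha_a>0$ almost everywhere, for every $v\in H^{k+s}_{\mathrm{loc}}(\R)$ we have
\[
\Phi^a(v)\geq \int_\R W(v)\,\de x+\alpha_a\int_\R\int_\R\frac{|v^{(k)}(x)-v^{(k)}(y)|^2}{|x-y|^{1+2s}}\,\de x\,\de y=\Phi^{\alpha_a}(v),
\]
where $\Phi^{\alpha_a}$ is the rescaled functional with constant kernel $\alpha_a$. Taking the infimum over admissible $v$ gives $m^\omega(a)\geq m^\omega(\alpha_a)$. The constant kernel $\alpha_a$ is even, so Remark \ref{transition asymmetry} removes the dependence on $\omega$. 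The scaling remark for constant kernels then gives $m^\omega(\alpha_a)=\alpha_a^{1/(2(k+s))}\,m$, where $m=m^{k+s}$ is the homogeneous transition energy of \eqref{i13}.

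It therefore suffices to show $m>0$. This is implicit in \cite{Solci_2024}, where $m\#S(u)$ is the nontrivial $\Gamma$-limit. I would nevertheless give a self-contained argument based on the coercivity Theorem \ref{coercivity}.

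\textbf{Step 1: reduction to compactly supported transitions.} Suppose by contradiction that $m^\omega(a)=0$. By Corollary \ref{coroll:min_equivalence}, there exist $T_j$ and $v_j$ with $v_j=\omega\sgn(x)$ for $|x|>T_j$ and $\Phi^a(v_j)\to0$. The infimum over $T$ equals the limit, so the sequence can be taken bounded in energy, say by $S$.

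\textbf{Step 2: the case $k=0$, $s>1/2$.} Each $v_j$ is continuous and passes from $-\omega$ to $\omega$, so there are points $t<t'$ with $v_j(t)=-1/2$ and $v_j(t')=1/2$ such that $|v_j|\leq 1/2$ on $[t,t']$. By the Hölder estimate of Theorem \ref{thm:sobolev_embd},
\[
1\leq C[v_j]_{H^s}|t-t'|^{s-1/2},
\]
and hence $|t'-t|\geq c\,\Phi^a(v_j)^{-1/(2s-1)}\to\infty$. On the other hand,
\[
\int_t^{t'}W(v_j)\,\de x\geq \min_{|z|\leq1/2}W(z)\,|t'-t|,
\]
which must tend to $0$. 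This is a contradiction.

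\textbf{Step 3: the case $k\geq1$.} Set $\e=\tau/T_j$, so that Theorem \ref{coercivity} applies with uniform constants; this gives $\|v_j^{(k)}\|_{L^2}\leq C$ and $\sup_j\|v_j\|_\infty\leq C$. Choose a middle crossing interval $[t,t']$ on which $v_j$ goes from $-1/2$ to $1/2$. The $W$-term forces $|t'-t|\to0$. Morrey's embedding, part 2(a) of Theorem \ref{thm:sobolev_embd}, controls $v_j$ through $\|v_j'\|_{L^2}$, and interpolation (Proposition \ref{ineq:interpolation}) bounds $\|v_j'\|_{L^2}$ on a unit interval around $t$ by the energy there. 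This yields
\[
1\leq \|v_j'\|_{L^2}|t'-t|^{1/2}\leq C\big(\|v_j\|_{L^2}+\Phi^a(v_j)^{\theta/2}\big)|t'-t|^{1/2},
\]
which tends to $0$, a contradiction.

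The main obstacle is Step 3. One must make sure that the constants in Theorem \ref{coercivity} and Proposition \ref{ineq:interpolation} do not degenerate as the energy tends to $0$. The rescaling with $\e=\tau/T_j$ is also somewhat artificial, but it is harmless: for fixed $a$, the rescaled kernels still form a uniform family.
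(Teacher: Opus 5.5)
Your proof is correct, but it takes a different route from the paper.

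\textbf{The paper's route.} The paper argues by compactness. It reduces to $m(1)>0$ and takes competitors for $m(1,1/(2\e_n))$ with energy below $1/n$. It translates them so that a transition interval inside $A^\eta_{\e_n}$ starts at $0$. Theorem \ref{coercivity} then gives $H^{k+s}$ bounds on a fixed interval $J$, from which it extracts a uniformly convergent subsequence. The limit $w$ satisfies $\int_J W(w)=0$, so it is identically $\pm1$. This contradicts $w(0)\le -1+\eta$ and $w(z)\ge 1-\eta$. The argument treats $k=0$ and $k\ge1$ uniformly and is purely qualitative.

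\textbf{Your route.} You use a direct ``short transition versus regularity'' estimate. The $W$-term forces the length $\ell$ of a crossing from $-1/2$ to $1/2$ to satisfy $\ell\lesssim\Phi^a(v)$. Hölder continuity, in turn, bounds $\ell$ from below.

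\textbf{What your route buys.} For $k=0$ your argument is elementary and quantitative. Combining $1\le C[v]_{H^s}\ell^{s-1/2}$, $[v]^2_{H^s}\le\Phi^a(v)/\alpha_a$ and $\ell\le 4\Phi^a(v)/\alpha_W$ gives $\Phi^a(v)^s\ge c$ for every admissible $v$. This needs none of Corollary \ref{coroll:min_equivalence}, Remark \ref{rmk:equibounded_intervals}, Theorem \ref{coercivity}, or the sign regularization. The contradiction framing and your preliminary reduction to $m$ are superfluous there. For $k\ge1$ the gain is smaller. You still need the uniform $L^\infty$ bound of Theorem \ref{coercivity}, hence the paper's machinery, to control $\|v_j\|_{L^2}$ in the interpolation inequality. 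What you avoid is the subsequence extraction, and you get a lower bound depending only on the energy bound $S$.

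\textbf{One point to state explicitly.} In Step 2 the interval $(t,t')$ varies with $j$, so the Hölder constant must not depend on it. Theorem \ref{thm:sobolev_embd} as stated allows such dependence. Either apply the estimate on $\R$, or note that $|u(x)-u(y)|\le C[u]_{H^s(I)}|x-y|^{s-1/2}$ is scale invariant in one dimension. The paper uses it the same way in Lemma \ref{lemma:sign_v_regularization}.
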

\begin{proof}
    Since $m^\omega(a) \geq \min\{1,\alpha_a\}m(1)$, we only have to prove that $m\coloneqq m(1)$ is strictly positive (see also Remark \ref{transition asymmetry} for the drop of the superscript). 
    
    Suppose by contradiction that $m=0$. Then, by Corollary \ref{coroll:min_equivalence}, for every $n>0$ we can find a $\e_n>0$, such that $\e_n\to 0$ as $n \to \infty$, 
    and $v_n$ admissible for $m\left(1,\frac{1}{2\e_n}\right)$ (see Definition \ref{def:transition_energies}) such that
    \begin{equation*}
    \Phi^1\left(v_n, \left(-\frac{1}{2\e_n},\frac{1}{2\e_n}\right)\right) < \frac{1}{n}.
    \end{equation*}
    Take the set $A_{\e_n}^\eta$  in Remark \ref{rmk:equibounded_intervals} and let $I_n=(x_n,y_n) \subset A_{\e_n}^\eta$ be any interval such that $v_n(x_n)< -1+\eta$ and $v_n(y_n)> 1-\eta$. Clearly $|I_n|\leq |A^\eta_{\e_n}|\leq C_\eta$ is bounded uniformly in $n$. 
    
    We consider the translations $w_n(x) \coloneqq v_n(x+x_n)$:  by Theorem \ref{coercivity} for  $J=[0,C_\eta]$
    \[\norm{w_{n}}_{H^{k+s}(J)} = \norm{v_n}_{H^{k+s}(J+x_n)} \leq C.\]
     Then, we can extract a subsequence of $(w_{n_j})_{j}$ such that $w_{n_j}$ converges to some $w$ weakly in $H^{k+s}(J)$ and uniformly. Moreover, we can also suppose that $y_{n_j}-x_{n_j}$ converges to some $z\in J$ by compactness of $J$.
     
     In particular, the uniform convergence  implies that
    \[\int_J W(w)\,\de x = \lim_{j \rightarrow +\infty} \int_J W(w_{n_j})\,\de x \leq \lim_{j\rightarrow +\infty}\Phi^1\left(v_{n_j}, \left(-\frac{1}{2\e_{n_j}},\frac{1}{2\e_{n_j}}\right)\right) = 0.\]
    Thus $w$ is constant and equal to $1$ or $-1$ on $J$, contradicting the fact that, again by uniform convergence,
    \[w(0)=\lim_{j\to +\infty} w_{n_j}(0) \leq -1+\eta,\quad w(z)=\lim_{j\to +\infty} w_{n_j}(y_{n_j}-x_{n_j})\geq 1-\eta.\]
\end{proof}

\section{Limsup inequalities}
Let us recall the notation $S^\pm(u)$ introduced in \eqref{def: S^pm} for the directional jump set of a function  $u \in BV((0,1), \{\pm 1\})$. 
\begin{Theorem}
    For every $u \in BV((0,1), \{\pm 1\})$ there exists a sequence $(u_\e)_{\e}$ in $H^{k+s}((0,1))$ such that $u_\e \to u$ in measure as $\e \to 0$ and, for any $\lambda\in [0,+\infty)$,
    \begin{equation}
    \label{ineq:limsup}
        \limsup_{\substack{\e\to 0 \\ \del/\e\to\lambda}} F_{\e,\del}(u_\e) \leq m^+_\lambda\#S^+(u) + m^-_\lambda\# S^-(u).
    \end{equation}
    If we further suppose $a$ to be continuous, then \eqref{ineq:limsup} is true also for $\lambda=\infty$.
\end{Theorem}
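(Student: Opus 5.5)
The recovery sequence will be built by gluing rescaled almost-optimal finite-length profiles at each jump point. Let $S(u)=\{t_1<\dots<t_N\}$ with signs $\omega_i$, and let $u_\e$ be constant ($\pm1$) away from $\e$-neighbourhoods of the $t_i$. Since the double-well term vanishes wherever $u_\e=\pm1$, only the transition layers contribute to it. The nonlocal term then splits into diagonal contributions, from pairs $x,y$ near the same $t_i$, and interaction contributions, from pairs near different $t_i$ or one near $t_i$ and one far away.

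\textbf{Construction.} Fix $\eta>0$ and, by Corollary~\ref{coroll:min_equivalence}, a large $T$. For $\lambda\in(0,\infty)$, choose $v_i$ with $v_i=\omega_i\sgn$ for $|x|>T$ and $\Phi^{a(\cdot/\lambda',\cdot/\lambda')}(v_i)\le m^{\omega_i}_{\lambda'}+\eta$. The kernel issue is that the rescaled kernel is $A_\e(x,y)=a\big((\e x+t_i)/\delta,(\e y+t_i)/\delta\big)$, i.e.\ $a$ with period $\delta/\e$ and a shift $r_\e=t_i/\delta$. By periodicity only $r_\e \bmod 1$ matters. So I will shift the profile: set $u_\e(x)=v_i((x-t_i-\delta\theta_\e)/\e)$, with $\theta_\e\in[0,1)$ chosen so that $(t_i+\delta\theta_\e)/\delta\in\Z$. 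This makes the rescaled kernel exactly $a(x\e/\delta,y\e/\delta)$; the shift $\delta\theta_\e\to0$ does not affect convergence in measure. It remains to pass from $\lambda_\e=\delta/\e$ to $\lambda$. I will use a fixed profile $v$ (an almost-minimizer for $\lambda$) and the scaling identity of Remark~\ref{rmk:lambda_continuity}: write $v(x)=\tilde v(x/\lambda)$ and use $\tilde v(\cdot/\lambda_\e)$ as the competitor. Its energy is $\lambda_\e\int W(\tilde v)+\lambda_\e^{1-2(k+s)}[\dots]$, which tends to the energy at $\lambda$. Hence $\limsup\le m^{\omega}_\lambda(a,T')+\eta$.

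For $\lambda=0$ the plan is to show that for the fixed compactly supported (in derivative) profile $v$, with $v^{(k)}\in H^s$, one has $\int\!\int a(x/\lambda_\e,y/\lambda_\e)\,g(x,y)\,\de x\,\de y\to\bar a\int\!\int g$ as $\lambda_\e\to0$, where $g=|v^{(k)}(x)-v^{(k)}(y)|^2|x-y|^{-1-2s}\in L^1(\R^2)$. This is weak-$*$ convergence of periodic oscillations (Riemann--Lebesgue for periodic functions) tested against an $L^1$ function, which is standard. Here the shift does not matter. For $\lambda=\infty$ with $a$ continuous, the kernel on the support region $|x|,|y|\lesssim T$ after rescaling is $a(\e x/\delta+r_\e,\e y/\delta+r_\e)$. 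Choosing the shift so that $r_\e\equiv\tau_0 \pmod 1$ with $a(\tau_0,\tau_0)=a_{\inf}$, uniform continuity gives the bound $a_{\inf}+o(1)$ on the bounded region. On the tails, the nonlocal energy outside $(-R,R)^2$ is small by Lemma~\ref{lemma:vanishing_tails}-type estimates (the tail only uses $\beta_a$). This yields $m^\omega(a_{\inf})+\eta$.

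\textbf{Interactions.} This step I expect to be the main obstacle. For $k\ge1$, $u_\e^{(k)}$ is supported in $\bigcup_i(t_i-C\e,t_i+C\e)$, with $\|u_\e^{(k)}\|_{L^2}^2\le C\e^{1-2k}$. The cross term between $x$ near $t_i$ and $y$ near $t_j$ ($i\ne j$), or far from all $t_j$, is bounded by
\[
\e^{2(k+s)-1}\,\beta_a\int_{|x-t_i|<C\e}|u_\e^{(k)}(x)|^2\int_{|y-x|>d/2}|x-y|^{-1-2s}\,\de y\,\de x\le C\e^{2s}d^{-2s},
\]
where $d$ is the minimal gap. More precisely, I will take the rescaled layer of size $R\e$ and split the diagonal region $|x-y|<R\e$ from the rest. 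The part of the $(0,1)^2$ integral not covered by $\bigcup_i (t_i-\tau,t_i+\tau)^2$ is controlled exactly as in \eqref{eq:tail_stima_generale}, giving $C(T/(\tau/\e-T))^{2s}\to0$. For $k=0$ the analogous estimate uses the second bound of Lemma~\ref{lemma:vanishing_tails}, with error $C\e^{2s-1}$, since $u_\e$ is constant between layers but takes different values on either side. The double integral of the constant-jump configuration between distinct far-apart layers is $O(\e^{2s-1}\cdot\e^{1-2s}\dots)$; I will verify it scales like $\e^{2s-1}\cdot\e^{2s-1}\to0$ after the prefactor. Summing over $i$ and letting $\e\to0$, then $T\to\infty$, then $\eta\to0$, gives \eqref{ineq:limsup}.
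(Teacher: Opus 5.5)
Your proposal is correct and follows essentially the paper's proof. The paper also glues rescaled almost-optimal finite-length profiles from Corollary~\ref{coroll:min_equivalence} at the jump points, with centres $t_j^\delta$ aligned to the period of $a$; your competitor $\tilde v(\cdot/\lambda_\e)$ is exactly its rescaling $v_j(\lambda(x-t_j^\delta)/\delta)$. Like you, it uses Riemann--Lebesgue for $\lambda=0$, continuity at a diagonal minimizer for $\lambda=\infty$, and the separation $|x-y|\ge\tau-O(\e)$ to control cross-interactions.

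The only slip is your $k=0$ bookkeeping. The cross terms have bounded integrand on $\{|x-y|\ge\tau/2\}\cap(0,1)^2$, so they are $O(1)$ before the prefactor and $O(\e^{2s-1})$ after it, not $\e^{2s-1}\cdot\e^{2s-1}$. This still suffices since $s>\frac12$.
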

\begin{proof}
    Let us enumerate the elements of the jump-set,
    \[S(u)= \{t_1,\dots, t_M\},  \quad 0\eqqcolon t_0< t_1<t_2<\ldots<t_M<t_{M+1}\coloneqq 1.\] 
    and let  $2\displaystyle\tau=\min_{i\ne j}|t_i-t_j|$ be the minimal distance among jump points or between a jump point and an edge of the interval. We also let
    \[s_j = \sgn\big(u(t_j^+)- u(t_j^-)\big),\quad j=1,\ldots, M\] 
        be the ``direction" of the jumps. L
        
    By Corollary \ref{coroll:min_equivalence}, for any $\lambda\in [0,+\infty]$, \[m^\pm_{\lambda} = \inf_T m^\pm_{\lambda}(T).\]    
   Consequently, for any $N>0$ and $\lambda\in [0,+\infty]$, there exist $T>0$ and a function $v^\pm_\lambda\in H^{k+s}_{\operatorname{loc}}(\R)$ such that $v^\pm_\lambda(x) \equiv \pm\sgn(x)$ for $\abs{x}>T$ and, recall Definition \ref{def:transition_energies},
    \begin{equation}\label{eq:approx_choice}
       \begin{split}
            \Phi^{a(\cdot\,/\lambda)}(v^\pm_\lambda) &\leq m_\lambda^{\pm} + \frac{1}{N}, \quad \text{for }\lambda\in (0,+\infty),\\
             \Phi^{\bar a}(v^\pm_0) &\leq m_0^{\pm} + \frac{1}{N},\\
              \Phi^{a_{\inf}}(v^\pm_\infty) &\leq m_\infty^{\pm} + \frac{1}{N}.\\
       \end{split}
    \end{equation} 
    Moreover, it is also possible to suppose that $T>1$ and, when $\lambda \ne +\infty$, even that $T>\lambda$.
    We note that by construction, $||v^\pm_\lambda||_{L^\infty(\R)} =  ||v^\pm_\lambda||_{L^\infty((-T,T))}<+\infty$ and as well, if $k\geq 1$, $||(v^\pm_\lambda)^{(k)}||_{L^2(\R)} = ||(v^\pm_\lambda)^{(k)}||_{L^2((-T,T))}<+\infty$.

   We shall approximate $u$, both in measure and in energy, by gluing together rescaled versions of the approximate optimal profiles $v_j\coloneqq v^{s_j}_\lambda$ (from now on we consider $\lambda$ fixed and drop the explicit dependence).  For this purpose, we partition $(0,1)$ into sub-intervals symmetric with respect to the jump-points, namely
    \begin{align*}
        I_j &\coloneqq  (b_{j-1}, b_j), \quad j=1,\ldots, M, \\
        b_j &\coloneqq \frac{t_j+t_{j+1}}{2}, \quad j=1,\ldots, M-1, \,b_0\coloneqq 0, \,
        b_{M}\coloneqq 1.
    \end{align*}
     Then, we define the sequences
    \begin{equation}
        \label{eq:recovery_sequence}
        u_\e(x) = \begin{cases}
            \displaystyle\sum_{j=1}^M v_{j}\Big( \frac{\lambda}{ \del}(x-t_j^\delta) \Big)\mathsf{1}_{I_j}(x), &\text{ for }\lambda\in (0,+\infty),\\
            \displaystyle\sum_{j=1}^M v_{j}\Big(\mkern1mu\frac{x-t_j^\delta}{ \e} \mkern1mu\Big)\mathsf{1}_{I_j}(x), &\text{ for }\lambda\in \{0,+\infty\},\\
        \end{cases}
    \end{equation}
    where $\mathsf{1}$'s denote indicator functions and the points $t^\delta_j$ satisfy $|t_j-t_j^\delta|\le\del$, but their precise value will be determined later with respect to $\lambda$.
    
    For $\e$ small enough, precisely such that 
    \begin{equation*}
        \tau_\e\coloneqq \e T+\delta<\tau,
    \end{equation*} $u_\e$ belongs to $H^{k+s}((0,1))$.
    Indeed, for sure $u_\e\in H^{k+s}(I_j)$ for each $j$. Furthermore, as $v_j\in H^{k+s}_{\rm loc}(\R)$, $u_\e\in H^k((0,1))$ by construction.  % and the construction is local, we have
    %\[\|u_\e\|^2_{H^k((0,1))} = \sum_{j=1}^M \|v_j\|^2_{H^k(I_j)}<+\infty.\]
    Then $u_\e\in H^{k+s}((0,1))$ as a consequence of the following claim:
    \begin{equation}
    \label{bound interactions}
        \int_{I_i}\int_{I_j} \frac{|u^{(k)}_\e(y)-u^{(k)}_\e(x)|^2}{|x-y|^{1+2s}} \,\de x\,\de y \leq \frac{C}{(\tau-\tau_\e)^{1+2s}}\big(\|u_\e^{(k)}\|^2_{L^2(I_j)} +  \|u_\e^{(k)}\|^2_{L^2(I_i)}\big).
    \end{equation}
    In order to prove the claim, note that the integrand can be non-zero only if
$|x-y|\geq \tau-\tau_\e$. Indeed, $|x-y|\geq 2\tau$  when  $|i-j|>1$, while, if $i=j+1$, 
\[u^{(k)}_\e(x)-u^{(k)}(y) \equiv 0,\]
for any $x\in\big(t_j^\delta+\e T, b_j)$ and $y\in(b_j,t_{j+1}^\delta-\e T\big)$. Thus, the integrand is non-zero if
\[|x-y| \geq \tau-\e T-\delta\geq \tau-\tau_\e,\]
and consequently 
\begin{align*}
   \int_{I_i}\int_{I_j} \frac{|u^{(k)}_\e(y)-u^{(k)}_\e(x)|^2}{|x-y|^{1+2s}} \,\de x\,\de y&\leq C\int_{I_i}\int_{I_j} \frac{|u^{(k)}_\e(y)-u^{(k)}_\e(x)|^2}{(\tau-\tau_\e)^{1+2s}} \,\de x\,\de y \\
    &\leq \frac{C}{(\tau-\tau_\e)^{1+2s}}\big(\|u_\e^{(k)}\|^2_{L^2(I_j)} +  \|u_\e^{(k)}\|^2_{L^2(I_i)}\big).
\end{align*}

    We note that, for $\e$ small enough, $u_\e$ may differ from $u$ only inside the intervals $t_j+(-\tau_\e,\tau_\e)$
    of total length $2\tau_\e\rightarrow 0$, thus proving convergence in measure. As for the energy, from \eqref{bound interactions}, since $\tau_\e\to 0$, we have:
\begin{equation}\label{eq:energy_difference_recovery}
    \begin{split}
          & F_{\e,\del}(u_\e) - \sum_{j=1}^M F_{\e,\del}(u_\e, I_j) \\
           &\qquad =\sum_{i\neq j} \e^{2(k+s)-1} \int_{I_i}\int_{I_j}a\Big(\frac{x}{\delta},\frac{y}{\delta}\Big)\frac{\big|{u^{(k)}_\e(y)-u^{(k)}_\e(x)}\big|^2}{|y-x|^{1+2s}} \,\de x\,\de y\\
           &\qquad\leq C \e^{2(k+s)-1}\big(\|u_\e^{(k)}\|^2_{L^2(I_j)} +  \|u_\e^{(k)}\|^2_{L^2(I_i)}\big)
    \end{split}
\end{equation}

If $k\geq 1$, we have by construction (see \eqref{eq:recovery_sequence}) that for any $j\in\{1,\dots, M\}$
\[\|u_\e^{(k)}\|^2_{L^2(I_j)} \leq C \e^{1-2k}\|v_j^{(k)}\|^2_{L^2(\R)},\] 
hence, the right-hand side of \eqref{eq:energy_difference_recovery} tends to 0 (as $O(\e^{2s})$). The only subtlety here is that $\e$ must be small enough if $\lambda \in (0,+\infty)$, say $\delta/\e \leq 2\lambda$.
    
If $k=0$, we note that $||u_\e||_{L^\infty(I_j)}\leq ||v_j||_{L^\infty(\R)}<+\infty$, that is, thus,
% we recall again Theorem \ref{coercivity}, which assures us that the $v_\e$, and hence the $u_\e$ are equibounded, thus
\[
\begin{split}
    \int_{I_j}|u_\e(x)|^2\,\de x \leq C |I_j|\leq C.
\end{split}
\]
and, consequently, the right-hand side of \eqref{eq:energy_difference_recovery} tends to zero (as $O(\e^{2s-1})$). These arguments prove that
    \[\limsup_{\substack{\e\to 0 \\ \del/\e\to\lambda}} F_{\e,\delta}(u_\e) \leq \sum_{j=1}^M
   \limsup_{\substack{\e\to 0 \\ \del/\e\to\lambda}}F_{\e,\delta}(u_\e,I_j),\]
   for every $\lambda$. To conclude, from (\ref{eq:approx_choice}) and the arbitrariness of $N$, it is sufficient to show that $$\limsup_{\e \rightarrow 0} F_{\e,\delta}(u_\e, I_j) \leq \Phi^{a_\lambda}(v_j).$$

    \paragraph{Case $\lambda\in (0,+\infty)$.}  We choose  $t_j^\delta = \delta \left\lfloor\frac{t_j}{\delta}\right\rfloor$, so that by the periodicity of $a$ we have $a\Big( \frac{x}{\lambda}+\frac{t^\delta_j}{\del}, \frac{y}{\lambda}+\frac{t^\delta_j}{\del}\Big) = a\left( \frac{x}{\lambda}, \frac{y}{\lambda}\right)$, thus, by a change of variable we get
    \begin{equation*}
      \begin{split}
             F_{\e,\delta}(u_\e, I_j)
             &\le\frac{\del}{\lambda\e}\int_\R W\big(v_j(  x)\big) \,\de  x\\
               &\qquad +\Big(\frac{\del}{\lambda\e}\Big)^{1-2(k+s)}\iint_{\R^2}a\Big( \frac{x}{\lambda}, \frac{y}{\lambda}\Big)\frac{|v_{j}^{(k)}(  y)-v^{(k)}_{j}( x)|^2}{|y-x|^{1+2s}} \,\de x\,\de y\\
                   &=\Phi^{a(\cdot/\lambda)}(v_j)+\mr o_\e(1).
      \end{split}
    \end{equation*}
    %where the last passage follows from the fact that $\delta/\e\to \lambda$.
    \paragraph{Case $\lambda = 0$.}     
    We choose $t_j^\delta$ as in the previous case, and we get
     \begin{equation*}
      \begin{split}
             F_{\e,\delta}&(u_\e, I_j)\\
             %&\leq \int_\R W\big(v_j(  x)\big) \,\de  x +\iint_{\R^2}a\Big( \frac{\e}{\del}x+\frac{t^\delta_j}{\del}, \frac{\e}{\del}y+\frac{t^\delta_j}{\del}\Big)\frac{|v_{j}^{(k)}(  y)-v^{(k)}_{j}( x)|^2}{|y-x|^{1+2s}} \,\de x\,\de y\\ 
              &\leq\int_\R W\big(v_j(  x)\big) \,\de  x +\iint_{\R^2}a\Big( \frac{\e}{\del}x, \frac{\e}{\del}y \Big)\frac{|v_{j}^{(k)}(  y)-v^{(k)}_{j}( x)|^2}{|y-x|^{1+2s}} \,\de x\,\de y\\ 
                   &=\Phi^{\bar a}(v_j)+\mr o_\e(1),
      \end{split}
    \end{equation*}
where the last passage follows from the Riemann--Lebesgue Lemma:
$$a\Big(\frac{\e}{\delta}\cdot\,,\frac{\e}{\delta}\cdot\Big)\overset{*}{\rightharpoonup}  \bar a=\int_0^1\int_0^1a(x,y)\,\de x\,\de y, \quad\text{in } L^\infty.$$

    \paragraph{Case $\lambda = +\infty$.}
    Here we assume $a$ to be continuous, so that  
    \[a_{\inf} = \min_{0\leq t\leq 1} a(t,t)=a(r,r)\] 
    is well defined and attained by some $r\in [0,1)$. We chose
    \[t^\delta_j = \del\Big(\Big\lfloor\frac{t_j}{\delta}-r\Big\rfloor + r\Big),\] so that, taking into account the periodicity of $a$,  
    \[a\Big( \frac{\e}{\del}x+\frac{t^\delta_j}{\del}, \frac{\e}{\del}y+\frac{t^\delta_j}{\del}\Big)= a\left(r+ \frac{\e}{\delta}x, r+ \frac{\e}{\delta}y\right).\]
    Since $\frac{\e}{\delta} \rightarrow 0$ and $a$ is continuous, it follows that pointwise
    \[a\left(r+ \frac{\e}{\delta}x, r+ \frac{\e}{\delta}y\right) \rightarrow a(r,r) = a_{\inf},\]
    and, by the Dominated Convergence Theorem,
    \[a \left(r+ \frac{\e}{\delta}\cdot\, , r+ \frac{\e}{\delta}\cdot\right) \overset{*}{\rightharpoonup}  a_{\inf}\]
    in the weak-$*$ convergence of $L^\infty$. Then, we can repeat the argument of the previous case.
\end{proof}

\section{Liminf inequalities}
\label{Gamma-liminf}
\begin{Theorem}
    Let $u \in BV((0,1), \{\pm 1\})$ and $S^\pm(u)$ be defined as in \eqref{def: S^pm}. Then, for any sequence $(u_\e)_{\e}$ in $H^{k+s}((0,1))$ such that $u_\e \to u$ in measure as $\e \to 0$ and for any $\lambda\in [0,+\infty)$,
    \begin{equation}
    \label{ineq:liminf}
        \liminf_{\substack{\e\to 0\\  \del/\e\to\lambda}} F_{\e,\del}(u_\e) \geq m^+_\lambda\#S^+(u) + m^-_\lambda\# S^-(u).
    \end{equation}
    If we further suppose $a$ to be continuous, then \eqref{ineq:liminf} is true for $\lambda=\infty$ as well.
\end{Theorem}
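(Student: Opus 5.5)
We first reduce to a single jump. Assume the left-hand side of \eqref{ineq:liminf} is finite and pass to a subsequence realizing the $\liminf$, so that $\sup_\e F_{\e,\del}(u_\e)<+\infty$. By density of smooth functions in $H^{k+s}((0,1))$ we may take $u_\e\in C^\infty([0,1])$, so that Lemma \ref{lemma:transitions_number} and Remark \ref{rmk:equibounded_intervals} apply. Let $S(u)=\{t_1,\dots,t_M\}$ and choose $\tau>0$ so that the intervals $(t_j-\tau,t_j+\tau)$ are disjoint and contained in $(0,1)$. Since $a\ge\alpha_a>0$ and the integrands are non-negative, we may drop the cross terms between different intervals:
\[F_{\e,\del}(u_\e)\ge \sum_j F_{\e,\del}\big(u_\e,(t_j-\tau,t_j+\tau)\big).\]
It therefore suffices to show $\liminf F_{\e,\del}(u_\e,(t_j-\tau,t_j+\tau))\ge m^{s_j}_\lambda$ for each $j$. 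By the Remark following \eqref{eq:def_Phi}, this local energy equals $\Phi^{A_\e}_{\tau/\e}(v_\e)$, where $v_\e(x)=u_\e(\e x+t_j)$ and $A_\e(x,y)=a\big(\frac{\e x+t_j}{\del},\frac{\e y+t_j}{\del}\big)$.

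\textbf{Cluster decomposition.} Apply Remark \ref{rmk:equibounded_intervals} to $v_\e$ on $(-\tau/\e,\tau/\e)$ with $\eta$ small. The set $A^\eta_\e$ consists of at most $M_\eta$ intervals of total length at most $C_\eta$, and $\big||v_\e|-1\big|<\eta$ off $A^\eta_\e$. Since $u_\e\to u$ in measure, $v_\e$ is close to $-s_j$ on most of $(-\tau/\e,0)$ and close to $s_j$ on most of $(0,\tau/\e)$.
\begin{itemize}
\item Group the components of $A^\eta_\e$ into clusters. Up to subsequences, the mutual distances of components within a cluster stay bounded, while distances between different clusters, and from clusters to $\pm\tau/\e$, diverge. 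There are finitely many clusters, with centres $c^i_\e$.
\item Between consecutive clusters $v_\e$ has a constant sign on long intervals. Hence the clusters realise a chain of sign changes, and the net change is from $-s_j$ to $s_j$. Each cluster carries a net transition $\omega_i\in\{0,\pm1\}$, and at least one cluster has $\omega_i=s_j$.
\item Keeping only that cluster $c_\e:=c^{i}_\e$ (the others contribute non-negative energy), flatten $v_\e$ on both sides of it. Use Remark \ref{rmk:multiple_flattening}, case (1), on windows of length $L\to\infty$ that lie in the long gaps next to the cluster.
\end{itemize}
The result is a function $w_\e$ with $w_\e=s_j\sgn(x-c_\e)$ outside $c_\e+(-T,T)$ and
\[\Phi^{A_\e}(w_\e,\cdot)\le \big(1+C/N+C_N L^{-2s}\big)\Phi^{A_\e}_{\tau/\e}(v_\e).\]
Lemma \ref{lemma:vanishing_tails} then controls the cost of extending $w_\e$ to all of $\R$, and translating by $c_\e$ gives an admissible competitor for $m^{s_j}$ with the translated kernel $A_\e(\cdot+c_\e,\cdot+c_\e)$. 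Letting $N\to\infty$ and $L\to\infty$ at the end, it remains to bound from below the energy of such competitors.

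\textbf{Critical case $\lambda\in(0,+\infty)$.} The translated kernel is $a\big(\frac{x+r_\e}{\lambda_\e},\frac{y+r_\e}{\lambda_\e}\big)$ with $\lambda_\e=\del/\e\to\lambda$ and some shift $r_\e$. By Remark \ref{rmk:lambda_continuity} the energy is bounded below by $m^{s_j}_{\lambda_\e,r_\e}(a)=m^{s_j}_{\lambda_\e}(a)$, which is independent of $r_\e$. Continuity of $m^{s_j}_\lambda(a)$ in $\lambda$ then gives the claim.

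\textbf{Extreme cases $\lambda\in\{0,+\infty\}$.} Here we pass to a limit profile. By Theorem \ref{coercivity} the translated functions $w_\e(\cdot+c_\e)$ are bounded in $H^{k+s}(K)$ for every bounded interval $K$ (for $k=0$, after subtracting $\sigma_\e$). By the compact embedding of Theorem \ref{thm:sobolev_embd}, a diagonal subsequence converges to some $w$ strongly in $H^k_{\rm loc}$, locally uniformly, and weakly in $H^{k+s}_{\rm loc}$.

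Fix $\rho>0$ and $R>0$ and restrict the double integral to $\{|x-y|>\rho\}\cap(-R,R)^2$. On this set the factor $|w_\e^{(k)}(x)-w_\e^{(k)}(y)|^2|x-y|^{-1-2s}$ converges strongly in $L^1$, so it suffices to identify the weak-$*$ limit of the kernel.
\begin{itemize}
\item For $\lambda=0$, the kernel oscillates on scale $\del/\e\to0$, so by Riemann--Lebesgue it converges weakly-$*$ to $\bar a$. This yields $\liminf\ge\int W(w)+\bar a[w]^2$ on the restricted set.
\item For $\lambda=+\infty$ with $a$ continuous, the kernel $a\big(p_\e+\frac{\e}{\del}x,p_\e+\frac{\e}{\del}y\big)$ converges uniformly on bounded sets to $a(p,p)\ge a_{\inf}$, where $p$ is a limit point of $p_\e$.
\end{itemize}
In both cases we let $\rho\to0$ and $R\to\infty$, obtaining $\Phi^{\bar a}(w)$, respectively $\Phi^{a_{\inf}}(w)$, as a lower bound.

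\textbf{Main obstacle.} The hardest step is to ensure that $w$ is admissible, i.e.\ $w\to\pm s_j$ at $\pm\infty$. The flattening length $T$ is tied to the cluster and must stay bounded for local convergence to preserve the boundary values. This is why the clustering is done first: all of $A^\eta_\e\cap$ cluster lies in a bounded window around $c_\e$. Outside that window $\big||w_\e|-1\big|<\eta$ with the correct signs, and $W(w)$ has finite integral, so the limit inherits the sign pattern. The limit $w$ is then an admissible competitor up to the $\eta$-closeness, which Corollary \ref{coroll:min_equivalence} and a further flattening remove. We then conclude with $m^{s_j}(\bar a)=m_0$ and $m^{s_j}(a_{\inf})=m_\infty$.
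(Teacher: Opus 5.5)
Your proof is correct, but it builds the bounded-window competitor by a genuinely different route from the paper.

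\textbf{What the paper does.} It first flattens $w^j_\varepsilon$ in the outer region $\theta\tau/\varepsilon<|x|<2\theta\tau/\varepsilon$ and pays $C\theta^{2s}$ for the tails via Lemma \ref{lemma:vanishing_tails}. In the critical case this already suffices, since the flattened $v^j_\varepsilon$ is admissible for $m^{s_j}_{\delta/\varepsilon,\,t_j/\varepsilon}$; your cluster step is harmless there but not needed. For $\lambda=0$ the paper keeps \emph{all} transitions and compresses them into a window of size $(M-1)L+C_\eta$. It flattens each long interior plateau and cuts out a piece whose length lies in $\frac{\delta}{\varepsilon}\mathbb{Z}$, so the periodic kernel is unchanged on each side of the cut. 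The interaction across the cut is then estimated by Theorem \ref{coercivity} and, for $k=0$, by Lemma \ref{sigma Hs bound}. For $\lambda=+\infty$ the paper uses no compactness. On the strip $\{|x-y|<R\}$, uniform continuity gives $a^j_\varepsilon\ge a_{\inf}-1/N$. Off the strip, the coercivity bound makes the contribution $O(R^{-2s})$, plus $O(R^{1-2s})$ when $k=0$.

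\textbf{What your route does instead.} You select a single cluster with net transition $s_j$ and discard the others. This is legitimate because all integrands are non-negative and only $m^{s_j}_\lambda$ per jump is needed. Flattening inside the two diverging gaps next to that cluster gives the bounded window directly. You need no period-compatible cut and no cross-cut estimate. You also treat $\lambda=0$ and $\lambda=+\infty$ uniformly, by compactness plus identification of the limit kernel: $\bar a$ via Riemann--Lebesgue, or $a(p,p)\ge a_{\inf}$ respectively. The price is the subsequence bookkeeping of the cluster extraction. You also need an application of Lemma \ref{lemma:vanishing_tails} in which the constant plateaus are much longer than the kept window; this is possible exactly because the gaps diverge.

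\textbf{Details to make explicit.}

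First, the selected cluster must be at diverging distance from $\pm\tau/\varepsilon$. You can choose $\tau$ so that no limit point of the finitely many macroscopic transition positions equals $t_j\pm\tau$. Alternatively, take the cluster between a gap point of sign $-s_j$ in $(-\frac34\tau/\varepsilon,-\frac14\tau/\varepsilon)$ and one of sign $s_j$ in $(\frac14\tau/\varepsilon,\frac34\tau/\varepsilon)$.

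Second, the Riemann--Lebesgue step must hold uniformly in the shift $p_\varepsilon=(t_j+\varepsilon c_\varepsilon)/\delta$. This is true for bounded periodic $a$.

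Third, send $L\to\infty$ before $N\to\infty$, since $C_N$ depends on $N$.

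Fourth, your closing remark about removing an ``$\eta$-closeness'' via Corollary \ref{coroll:min_equivalence} is unnecessary. After flattening, $w_\varepsilon(\cdot+c_\varepsilon)=s_j\sgn$ exactly outside $(-T,T)$ with $T$ independent of $\varepsilon$. The local uniform limit is therefore already an admissible competitor.
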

\begin{proof}
We suppose $\sup_\e F_{\e,\delta}(u_\e) <+\infty$, otherwise the inequality is trivially true. 

     For each jump point $t_j \in S(u)$ we define $I_j = (t_j-\tau, t_j + \tau)$ where $\tau>0$ is chosen small enough to make the intervals $I_j$ disjoint pairwise and from $\{0,1\}$. Then,
   \begin{equation}\label{eq:lower_bound_localization}
       \begin{split}
           \liminf_{\e\to 0} F_{\e,\del}(u_\e) &\geq \liminf_{\e\to 0}\; \sum_j F_{\e,\del}(u_\e, I_j) \\
           &\geq \sum_j \;\liminf_{\e \to 0} F_{\e,\del}(u_\e, I_j).
       \end{split}
   \end{equation}
%Third, by the density of $C^\infty$ in $H^{k+s}$ we can suppose without loss of generality that $u_\e\in C^\infty$. 

We also fix $\eta \in (0,1)$  and consider the local rescalings
 \begin{equation*}
     w_\e^j(x)=u_\e(t_j+\e x),\quad w_\e^j\in H^{k+s}\Big(\Big({-}\frac{\tau}{\e} ,\frac{\tau}{\e} \Big)\Big).
 \end{equation*}
The Remark \ref{rmk:equibounded_intervals} provides us with two constants independent of $\e$, $C_\eta>0$ and
 $M_\eta\in \N$, such that $w_\e^j$ satisfies $||w^j_\e|-1|<\eta$ outside sets $A^{\eta,j}_\e$ of measure less then $C_\eta$. In addition, each $A^{\eta,j}_\e$ is a union of at most $M_\eta$ intervals  and $||w^j_\e|-1\big|>\frac{\eta}{2}$ on  $A^{\eta,j}_\e$. Since $\eta$ will not vary through the proof we will omit explicit dependence on $\eta$ till the end.

We also fix $\theta\in (0,1/3)$  and notice that, since $u_\e\rightarrow u$ in measure, upon considering subsequences and a possibly smaller $\tau$, we can suppose
\begin{equation*}
    \big||w^j_\e|-1\big|\leq \eta, \quad\text{on }\Big({-}\frac{\tau}{\e}, \frac{\tau}{\e}\Big)\setminus\Big({-}\theta\frac{\tau}{\e}, \theta\frac{\tau}{\e}\Big),
\end{equation*}
and \begin{equation*}
    \sgn\big(w^j_\e( \tau/\e )- w^j_\e(- \tau/\e)\big)=\sgn\big(u(t_j+ \tau)- u(t_j- \tau)\big)\eqqcolon s_j.
\end{equation*}
We note that $s_j=\pm 1$ if and only if $t_j\in S^\pm(u)$, and, set 
\[a^{j}_{\e}(x,y)\coloneqq a \Big(\frac{\e}{\delta}(x+t_j),\frac{\e}{\delta}(y+t_j)\Big),\] 
we have
    \begin{equation}\label{eq:F_Phi}
        F_{\e,\del}(u_\e, I_j)=\Phi^{a^{j}_{\scriptstyle\e}}_{\tau/\e}(w_\e^j), 
    \end{equation}
    with the notation for $\Phi^a_T$ introduced after \eqref{eq:def_Phi}. For each $j$,  by   the boundedness in energy of the sequence $u_\e$ and Remark \ref{rmk:multiple_flattening} applied with
   \begin{equation*}
        -b=c\coloneqq \frac{\tau}{\e},\quad -b''=c''\coloneqq \theta\frac{\tau}{\e},\quad -b'=c'\coloneqq 2\theta\frac{\tau}{\e}, 
   \end{equation*}
  for any fixed $N\in\N$, we find a function $v_\e^j \in H^{k+s}_{\operatorname{loc}}(\R)$ such that 
    \begin{equation*}
       v_\e^j(x) = s_j\sgn(x)  , \text{ for }|x| > 2\theta\frac{\tau}{\e} ,
    \end{equation*}
 and
 \begin{equation}\label{eq:lower_bound_flattening}
     \Phi^{a^{j}_{\scriptstyle\e}}_{\tau/ \e}(v_\e^j)\leq \Big(1+\frac{C}{N}+o_\e(1)\Big)\Phi^{a^{j}_{\scriptstyle\e}}_{\tau/ \e}(w^j_\e),
 \end{equation}
 with $o_\e(1)\to 0$ keeping $N$ fixed. Furthermore, by Lemma \ref{lemma:vanishing_tails} 
    \begin{equation}\label{eq:lower_bound_tails}
        \begin{split}
             \Phi^{a^{j}_{\scriptstyle\e}}_{\tau/ \e}(v_\e^j)&\geq \Phi^{a^{j}_{\scriptstyle\e}
             } (v_\e^j)- C\theta^{2s} + o_\e(1).
        \end{split}
    \end{equation}

   \paragraph{Case $\lambda\in (0,+\infty)$.} Recall Definition \ref{def:transition_energies} and Remark \ref{rmk:lambda_continuity}, we observe that 
   \[\Phi^{a^{j}_{\scriptstyle\e}
             } (v_\e^j)\ge m^{ s_j}_{\del/\e, t_j/\e}= m^{ s_j}_{\del/\e} = m_\lambda^{s_j} + o_\e(1).\] 
Hence, combining \eqref{eq:lower_bound_localization}, \eqref{eq:F_Phi}, \eqref{eq:lower_bound_flattening} and \eqref{eq:lower_bound_tails}, we get
    \begin{equation*}
   \begin{split}
     \liminf_{\varepsilon\to 0}   F_{\e,\del}(u_\e, I_j)&\ge \sum_j\liminf_{\varepsilon\to 0}\Big(1+\frac{C}{N}\Big)^{-1}\big(m^{s_j}_{\delta/\e}- C\theta^{2s}\big) \\
     &=  \sum_j\Big(1+\frac{C}{N} \Big)^{-1}(m^{s_j}_\lambda- C\theta^{2s}  ),
   \end{split}
    \end{equation*}
from which one concludes by the arbitrariness of $N$ and $\theta$.

\paragraph{Case $\lambda = 0$.} 
The first step is to further refine the approximating sequences $(v^j_\e)$ and construct new sequences $(\mathfrak{v}^{j}_\e)_\e$ in $H^{k+s}_{\mathrm{loc}}(\R)$, 
depending on a new parameter $T>0$ and large enough,  such that $\mathfrak{v}^{j}_{\e}(x)\equiv \mathfrak{v}^{j, T}_{\e}(x)=s_j\sgn(x)$ for $x>T$, \emph{independent} of $\e$, in addition to
\begin{equation*}
        \begin{split}
             \Phi^{a^{j}_{\scriptstyle\e}}(\mathfrak{v}^{j}_{\e}) &\leq  \Big(1+\frac{C}{N}+o_{T\to +\infty}(1)+o_\e(1)\Big)\Phi^{a^{j}_{\scriptstyle\e}}_{\tau/ \e}(w_\e^j)\\
             &\quad+C\theta^{2s}+ o_\e(1)+ o_{T\to +\infty}(1).
        \end{split}
    \end{equation*}
This improves the construction above, where $v^j_\e\equiv s_j\sgn$  only for $|x|>2\theta\tau/\e$. The fact that $T$ is independent of $\e$ will be essential to obtain strong compactness, which, in turn, will allow us to apply Riemann--Lebesgue Lemma.

\subparagraph{Main step, construction of $\mathfrak{v}^{j, T}_\e$.}
Outside of the same sets $A_\e^\eta$ defined for $w^j_\e$, also  $||v^j_\e|-1|\leq \eta$ by construction. We recall that $A_\e^\eta$ is a disjoint union of at most $M$ intervals and $|A_\e^\eta|\leq C_\eta$, with both $M$ and $C_\eta$ independent of $\e$.

Again by construction, $v^j_\e\equiv s_j\sgn$ for $|x|\geq 2\theta\tau/\e$. Thus there exist two diverging connected components of the set $\{x\colon||v^j_\e(x)|-1|\leq \eta\}$ of the form 
\[\Big({-}\frac{\tau}{\e}, x_-\Big),\quad  \Big(x_+,\frac{\tau}{\e}\Big),\quad \text{with }|x_\pm|\leq 2\theta\frac{\tau}{\e}.\] 
The only obstruction is thus the possible presence of other large intervals $I'\subset (x_-,x_+)$ on which $||v^j_\e|-1|\leq \eta$ (we note that there may be at most $M-1$ of them).

Let $L\geq 1$, and suppose that such an interval exists, say $I'=(b',c')$ with $c'-b'\geq L$. We will construct $\mathfrak{v}^{j}_\e\equiv\mathfrak{v}^{j, T}_\e$ by first flattening $v^j_\e$ on such intervals, then removing a portion of the flattened part. $T$ will then be a function of $L$.

 We apply the flattening Lemma \ref{lemma:flattening} in the version of Remark \ref{rmk:multiple_flattening}, case (2), with 
    $b=-\tau/\e$ and $c= \tau/\e$,  $b'$ and $c'$ as just above, while $b''$ and $c''$ are chosen so that
    \begin{equation*}
    \frac{L}{8} \geq b''-b'\geq \frac{L}{10}, \qquad \frac{L}{8} \geq c'-c''\geq \frac{L}{10}.
\end{equation*}
Then we get $\overline{v}^{j}_\e \in H^{k+s}_{\mathrm{loc}}(\R)$ such that
    \begin{equation}
    \label{eq:tilde_v_properties}
        \begin{split}
       \overline{v}^{j}_\e&\equiv \sgn(v_\e^j),\quad \text{in } (b'',c''),\\
        \overline{v}^{j}_\e &\equiv v^j_\e,\qquad \text{in } (I')^{\mathsf c},\\
            \Phi^{a^{j}_{\scriptstyle\e}}_{\tau/ \e}(\overline{v}^{j}_\e) &\leq \Big(1+\frac{C}{N}+\frac{C_{N}}{L^{2s}}\Big)\Phi^{a^{j}_{\scriptstyle\e}}_{\tau/ \e}(v_\e^j),
        \end{split}
    \end{equation} 
    both $C$ and $C_N$ are independent on $L$. 
    
    Since $\delta/\e \to 0$, for $\e$ small enough we can find $b'''$, $c'''$ such that
\begin{equation*}
    \begin{split}
     \frac{L}{8} \geq b'''-b''\geq \frac{L}{10}, \qquad \frac{L}{8} \geq c''-c'''\geq \frac{L}{10},\quad     \frac{\e}{\delta}(c'''-b''')\in \mathbb Z.
    \end{split}
\end{equation*}
Then, we remove the interval $(b''',c''')$ defining
    \[
    \mathfrak{v}_\e^{j}(x) \coloneqq 
    \begin{cases}
        \overline{v}_\e^{j}(x) & \text{for } x\leq b''',\\
        \overline{v}_\e^{j}(x+c'''-b''') & \text{for } x > b''',
    \end{cases}
    \]
    in particular, in place of $(b',c')$, now $||\mathfrak{v}_\e^{j}|-1|\leq \eta$ on an interval of length 
    \[c'-b'-(c'''-b''')\leq \frac{L}{2}.\]
    Clearly,
    \[\begin{split}
        \Phi^{a^{j}_{\scriptstyle\e}}\big( \mathfrak{v}_\e^{j},(-\infty, b''')\big) &= \Phi^{a^{j}_{\scriptstyle\e}}\big(\overline{v}_\e^{j},(-\infty,b''')\big),\\
        \Phi^{a^{j}_{\scriptstyle\e}}\big( \mathfrak{v}_\e^{j},(b''',+\infty)\big) &= \Phi^{a^{j}_{\scriptstyle\e}}\big(\overline{v}_\e^{j},(c''',+\infty)\big),
    \end{split}\]
    where the second equality follows from $ \e(c'''-b''')/\delta \in \mathbb Z$, since $a(x,y)$ is $1$-periodic. We are left with the contribution corresponding to the non-local part of $\Phi^{a^{j}_{\scriptstyle\e}}\big( \mathfrak{v}_\e^{j})$ for  $(x,y)\in (-\infty, b''')\times(b''',+\infty)$ (or vice versa, but the reasoning is analogous).
    
    If $x,y$ are both close to $b'''$, at distance less then $L/10$ to be precise, $\mathfrak{v}_\e^{j}(x)=\mathfrak{v}_\e^{j}(y)\in\{\pm 1\}$, and thus the corresponding contribution is null. 
    
    Otherwise $|x-y|\geq L/10$ and, if $k\geq 1$,
    \[\begin{split}
       & \int_{-\infty}^{b'''}\int_{b'''}^{+\infty} a_\e^j(x,y) \frac{\abs{(\mathfrak{v}_\e^j)^{(k)}(x)-(\mathfrak{v}_\e^j)^{(k)}(y)}^2}{|x-y|^{1+2s}}\,\de x\,\de y\\
       &=\int_{-\infty}^{b'''}\int_{b'''}^{+\infty}\mathsf{1}_{\{|x-y|\geq L/5\}}(x,y)\, a_\e^j(x,y) \frac{\abs{(\mathfrak{v}_\e^j)^{(k)}(x)-(\mathfrak{v}_\e^j)^{(k)}(y)}^2}{|x-y|^{1+2s}}\,\de x\,\de y\\
       &\leq C \int_{\R}\int_{L/10}^{+\infty} \frac{\abs{(\mathfrak{v}_\e^j)^{(k)}(x)}^2}{\xi^{1+2s}}\,\de x\,\de \xi \\
       &\leq \frac{C}{L^{2s}}\|(\mathfrak{v}_\e^j)^{(k)}\|^2_{L^2(\R)} \\
       &\leq \frac{C}{L^{2s} }\|(\overline{v}^j_\e)^{(k)}\|^2_{L^2(\R)},
    \end{split}\]
    which is bounded by $C L^{-2s}$ thanks to Theorem \ref{coercivity}.
    
    {On the other hand, if $k=0$, we let $\mathfrak{s}^j_\e$ to be the sign regularizations constructed in Lemma \ref{lemma:sign_v_regularization} in relation to $\mathfrak{v}^j_\e$. By Lemma \ref{sigma Hs bound}, 
    \[
       \int_{-\infty}^{b'''}\int_{b'''}^{+\infty} \frac{|\mathfrak{s}^j_\e(x)-\mathfrak{s}_\e^j(y)|^2}{\abs{x-y}^{1+2s}}\,\de x\,\de y
        \leq C\int_{L/10}^{+\infty}\frac{\xi}{\xi^{1+2s}}\,\de \xi \leq \frac{C}{L^{2s-1}},
    \]
    from which,
    \[
        \int_{-\infty}^{b'''}\int_{b'''}^{+\infty} a_\e^j(x,y) \frac{\abs{\mathfrak{v}_\e^j(x)-\mathfrak{v}_\e^j(y)}^2}{\abs{x-y}^{1+2s}}\,\de x\,\de y\leq \frac{C}{L^{2s}}\norm{\mathfrak{v}_\e^j-\mathfrak{s}_\e^j}^2_{L^2(\R)} + \frac{C}{L^{2s-1}}.
    \]
    }
    Again, we conclude thanks to Theorem \ref{coercivity}, since
\[\norm{\mathfrak{v}_\e^j-\mathfrak{s}_\e^j}^2_{L^2(\R)} =\norm{\overline{v}_\e^j-\overline{\sigma}_\e^j}^2_{L^2(\R)},\]
  being $\overline{\sigma}^j_\e$ the sign regularization provided by Lemma \ref{lemma:sign_v_regularization} in relation to $\overline{v}^j_\e$.  
  
    In conclusion, 
 \begin{equation}
     \label{eq:energy_v_slashed}
     \begin{split}
            \Phi^{a^{j}_{\scriptstyle\e}}(\mathfrak{v}_\e^j ) &\leq \Phi^{a_{\scriptstyle\e}^j}\left(\overline{v}_\e^j\right) + o_{L\to \infty}(1)\\
        &\leq \Phi^{a_{\scriptstyle\e}^j}_{\tau/\e}\left(\overline{v}_\e^j\right) + C\theta^{2s}+o_\e(1)+o_{L\to \infty}(1)\\
        &\leq \Big(1+\frac{C}{N}+o_\e(1)\Big)\Big(1+\frac{C}{N}+\frac{C}{L^2s}\Big)F_{\e,\delta}(u_\e,I_k) \\
        &\qquad+ C\theta^{2s}+o_\e(1)+o_{L\to \infty}(1),
     \end{split}
 \end{equation}
 where we applied, in order, Lemma \ref{lemma:vanishing_tails} on  $\overline{v}_\e^j$, \eqref{eq:tilde_v_properties}, \eqref{eq:lower_bound_flattening} and \eqref{eq:F_Phi}.
    %since $\overline{v}_\e^j$ still satisfy the same properties of $v^j_\e$ needed to apply Lemma \ref{lemma:vanishing_tails} on the tails, and recalling \eqref{eq:F_Phi},\eqref{eq:lower_bound_flattening},\eqref{eq:tilde_v_properties}.

    Repeating this procedure of flattening and removal for all the (at most $M-1$) problematic large intervals, we construct a new sequence, which we still denote with $\mathfrak{v}_\e^{j}$, such that all interior intervals on which $||\mathfrak{v}_\e^{j}|-1|\leq \eta$ have length at most $L$. 
    Notice that the two extremal connected components of the set $\{x\colon||\mathfrak v^j_\e(x)|-1|\leq \eta\}$, 
\[\Big({-}\frac{\tau}{\e}, x_-\Big),\quad  \Big(x_+,\frac{\tau}{\e}\Big),\] 
now satisfy $x_+-x_-\leq (M-1)L+ |A_\e^\eta|\eqqcolon 2T$, so that, upon a possible translation and small abuse of notation, $\mathfrak v^j_\e(x)=s_j \sgn(x)$ for $|x|\geq T$, as desired.

\subparagraph{Final step.}   By Theorem \ref{coercivity},
   the sequence $(\mathfrak v^j_\e)_\e$ is bounded in $H^{k+s}((-2T,2T))$ and uniformly.
    Therefore, we can extract a subsequence converging to some $\mathfrak v^j\in H^{k+s}((-2T,2T))$, 
    \begin{itemize}
        \item weakly in $H^{k+s}((-2T,2T))$ by Banach-Alaoglu;
        \item strongly in $H^k((-2T,2T))$ by the compact embeddings of Sobolev spaces (recall \ref{thm:sobolev_embd});
         \item uniformly by Morrey's embeddings (recall again \ref{thm:sobolev_embd}) and Arzelà-Ascoli.
    \end{itemize}
    We notice that by the uniform convergence $\mathfrak v^j(x)=s_j \sgn(x)$ for $|x|>T$, therefore we can extend it on the rest of $\R$ as $s_j\sgn$.
    Lastly, we consider a new parameter $R>0$ and define the diagonal stripe
    \begin{equation}
    \label{def:close to diagonal set}
        D_R \coloneqq \big\{(x,y)\in\R^2 \,\big|\,|x-y|<R\big\}.
    \end{equation}
    We also denote
    $$f_\e(x,y) \coloneqq \frac{\abs{(\mathfrak{ v}_\e^j)^{(k)}(x) - (\mathfrak{v}_\e^j)^{(k)}(y)}^2}{\abs{x-y}^{1+2s}}, \quad f(x,y) \coloneqq \frac{\abs{(\mathfrak{v}^j)^{(k)}(x)-(\mathfrak v^j)^{(k)}(y)}^2}{\abs{x-y}^{1+2s}}.$$ 
    By the previous observations on the convergence of the $\mathfrak{ v^j_\e}$, $f_\e$ converge to $f$ strongly in $L^1((-T,T)^2\setminus D_R)$, and thus in $L^1(\R^2\setminus D_R)$, since $\mathfrak{ v^j}= \mathfrak{v^j_\e}\equiv \pm 1$ on $\R\setminus(-T,T)$.
    
   We can finally apply the Riemann--Lebesgue Lemma and get:
    \begin{align*}
        &\lim_{\e\rightarrow0} \iint_{\R^2\setminus D_R}a^j_\e(x,y) f_\e(x,y) \,\de x\,\de y =\bar a\iint_{\R^2\setminus D_R}f(x,y) \,\de x\,\de y.
    \end{align*}
    As a consequence,
    \begin{align*}
        \liminf_\e \Phi^{a^{j}_{\scriptstyle\e}}(\mathfrak{v}_\e^j) 
        &\geq \int_\R W(\mathfrak v^j)\,\de x + \bar a\iint_{\R^2\setminus D_R} f(x,y) \,\de x\,\de y,
        \end{align*}
    where the limit of the double-well energies exists due to the uniform convergence of the $\mathfrak{v}^j_\e$. 
    % Now, since $v^j\in H^{k+s}(-T-1,T+1)$ and $\abs{v^j(x)} = 1$ for $|x| >T$, then, with an argument similar to Lemma \ref{lemma:vanishing_tails}, $f(x,y)$ is actually integrable on $\R^2$. 
    Thus, sending $R$ to $0$, 
    \begin{equation}
    \label{ineq Phi}
        \liminf_\e \Phi^{a^{j}_{\scriptstyle\e}}(\mathfrak v^j_\e) \geq \Phi^{\bar a}(\mathfrak v^j) \geq m_0^{s_j}.
    \end{equation}
    Then, combining \eqref{ineq Phi} with \eqref{eq:energy_v_slashed} and \eqref{eq:lower_bound_localization} we conclude letting first $T$ and then $N$ go to $+\infty$, and lastly $\theta \to 0$.
\smallskip

\paragraph{Case $\lambda = +\infty$.} 
We let $R>0$ and consider again the diagonal stripe $D_R$ defined in \eqref{def:close to diagonal set}. Since $a$ is continuous and periodic, therefore uniformly continuous, and since $\frac{\delta}{\e}\rightarrow +\infty$, we have that, for every $(x,y)\in D_R$
\begin{equation*}
    \begin{split}
        a^j_\e(x,y) &= a\Big(\frac{t_j}{\delta}+ \frac{\e x}{\delta}, \frac{t_j}{\delta}+ \frac{\e y}{\delta} \Big) \\
        &\geq a\Big(\frac{t_j}{\delta}+ \frac{\e x}{\delta}, \frac{t_j}{\delta}+ \frac{\e x}{\delta} \Big)-\frac{1}{N} \\
        &\geq a_{\inf} -\frac{1}{N},
    \end{split}
\end{equation*}
   for all $\e$ small enough, depending on $R$ and $N$.
   
    Since $[v^j_\e]_{H^{k+s}((-\tau/\e,\,\tau/\e))}\leq \frac{1}{\alpha_a}\Phi^{a^{j}_{\scriptstyle\e}}_{\tau/ \e}(v_\e^j)$ is bounded in $\e$, for all $\e$ small enough as just above,
    \begin{align}
        \nonumber &\iint_{(-{\tau}/{\e},{\tau}/{\e})^2\cap D_R} a^j_\e(x,y)\frac{\abs{(v_\e^j)^{(k)}(x) - (v_\e^j)^{(k)}(y)}^2}{\abs{x-y}^{1+2s}}\, \de x\,\de y\\
        &\quad \label{stima vicino}
        \geq a_{\inf}\iint_{(-{\tau}/{\e},{\tau}/{\e})^2\cap D_R}  \frac{\abs{(v_\e^j)^{(k)}(x) - (v_\e^j)^{(k)}(y)}^2}{\abs{x-y}^{1+2s}}\, \de x\,\de y \;- \;\frac{C}{N}.
    \end{align}
    On the other hand, {for $k\geq 1$,} 
\begin{equation}
    \begin{split}\label{eq:stima_lontano_k1}
          &\iint_{(-{\tau}/{\e},\,{\tau}/{\e})^2\setminus D_R} \frac{\abs{(v_\e^j)^{(k)}(x) - (v_\e^j)^{(k)}(y)}^2}{\abs{x-y}^{1+2s}}\, \de x\,\de y  \\
        &\quad \leq C \int_R^{+\infty}\frac{1}{\xi^{1+2s}}\,\de \xi \int_{-\tau/\e}^{\tau/\e}\abs{(v_\e^j)^{(k)}(x)}^2\,\de x \\
        &\quad = \frac{C}{R^{2s}}\|(v_\e^j)^{(k)}\|^2_{L^2((-\tau/\e,\,\tau/\e))}\\
        &\quad \leq \frac{C}{R^{2s}}.
    \end{split}
\end{equation}
    where we used Theorem \ref{coercivity} in the last passage.
    
    {While, for $k=0$, we consider the sign regularizations $\sigma^j_\e$ constructed in Lemma \ref{lemma:sign_v_regularization} in relation to $v^j_\e$. We compute again

    \begin{align}
         \notag&\iint_{(-{\tau}/{\e},{\tau}/{\e})^2\setminus D_R} \frac{\abs{v_\e^j(x) - v_\e^j(y)}^2}{\abs{x-y}^{1+2s}}\, \de x\,\de y  \\
        \notag&\quad = \frac{C}{R^{2s}}\|{v_\e^j-\sigma_\e^j}\|^2_{L^2((-\tau/\e,\,\tau/\e ))}  + C\int_R^{+\infty}\int_{-\tau/\e}^{\tau/\e} \frac{\abs{\sigma_\e^j(x+\xi) - \sigma_\e^j(x)}^2}{\xi^{1+2s}}\, \de x\,\de \xi\\
        \notag&\quad \leq  \frac{C}{R^{2s}}+C\int_R^{+\infty} \frac{|\xi|}{|\xi|^{2s+1}}\\
        \label{eq:stima_lontano_k0}&\quad \leq \frac{C}{R^{2s}}+\frac{C}{R^{2s-1}},
    \end{align}
    where we used Theorem \ref{coercivity} again to bound the first term and Lemma \ref{sigma Hs bound} for the second one.
    } 
    
    Thus, putting together \eqref{eq:stima_lontano_k1} and \eqref{eq:stima_lontano_k0}, and since $|a^j_\e(x,y) - a_{\inf}|\leq \beta_a - \alpha_a$, we get, for every $k$,
\begin{equation}
    \begin{split}
           & \iint_{(-{\tau}/{\e},{\tau}/{\e})^2\setminus D_R} a^j_\e(x,y)\frac{\abs{(v_\e^j)^{(k)}(x) - (v_\e^j)^{(k)}(y)}^2}{\abs{x-y}^{1+2s}}\, \de x\,\de y\\
        &\label{stima lontano}
        \quad \geq a_{\inf}\iint_{(-{\tau}/{\e},{\tau}/{\e})^2\setminus D_R} \frac{\abs{(v_\e^j)^{(k)}(x) - (v_\e^j)^{(k)}(y)}^2}{\abs{x-y}^{1+2s}}\, \de x\,\de y - o_{R\to \infty}(1)
    \end{split}
\end{equation}

    In conclusion, adding \eqref{stima vicino} and \eqref{stima lontano}, we have
    \[ \Phi^{a^{j}_{\scriptstyle\e}}_{\tau/ \e}(v_\e^j) \geq  \Phi^{a_{\inf}}_{\tau/ \e}(v_\e^j) -\frac{C}{N}-o_{R\to \infty}(1).\]
    Therefore,  applying Lemma \ref{lemma:vanishing_tails} on $\Phi^{a_{\inf}}_{\tau/ \e}$, as for \eqref{eq:lower_bound_tails}, we get
    \begin{equation*}
        \liminf_\e \Phi^{a^{j}_{\scriptstyle\e}}_{\tau/ \e}(v_\e^j) \geq m^{s_j}_\infty -C\theta^{2s} - \frac{C}{N} -o_{R\to \infty}(1),
    \end{equation*}
    which, recalled \eqref{eq:lower_bound_flattening}, \eqref{eq:lower_bound_tails} and \eqref{eq:lower_bound_localization}, allows us to conlcude by the arbitrariness of $R$, $N$, and $\theta$.
\end{proof}

\section{Acknowledgements}
The authors wish to thank Prof. Andrea Braides for having proposed the problem within the course ``Singular perturbations in fractional Sobolev spaces", held at SISSA in 2025, and for the help he provided, suggesting some of the ideas. F.C. is a member of GNFM, and S.S and E.V. are members of GNAMPA of INdAM.

The authors state that there is no conflict of interest.

%\nocite{}
\printbibliography
%\bibliographystyle{siam}
%\bibliography{biblio}

\end{document}